\documentclass[english]{amsart}
\usepackage{amsmath} 
\usepackage{amssymb} 
\usepackage{amsthm}
\numberwithin{equation}{section}
\usepackage{graphicx}
\usepackage{geometry}
\usepackage{xcolor}
\usepackage{esint}
\usepackage{bm}
\usepackage{tikz}
\usepackage{comment}
\usepackage[nocompress]{cite}
\usepackage[colorlinks=true, allcolors=blue]{hyperref}
\usepackage{cleveref}
\newcommand{\OmegaT}{\Omega_T}
\newcommand{\uvec}{\mathbf{u}}

\newcommand{\vvec}{\mathbf{v}}

\newcommand{\varphivec}{\bm{\varphi}}

\newcommand{\RR}{\mathbb{R}}
\newcommand{\NN}{\mathbb{N}}
\renewcommand{\div}{\mathrm{div}_x}
\newcommand{\nablax}{\nabla_x}
\newcommand{\Svisc}{\mathbb{S}}
\newcommand{\Deltax}{\Delta_x}
\newcommand{\dd}{\mathrm{d}}
\newcommand{\weak}{\rightharpoonup}
\newcommand{\weakstar}{\overset{\ast}{\weak}}
\newcommand{\tgamma}{{\tilde{\gamma}}}
\newcommand{\dt}{\mathrm{d}t}
\newcommand{\dx}{\mathrm{d}x}
\newcommand{\Cw}{C_{\mathrm{w}}} %weakly continuous functions
\newcommand{\shearvisc}{\mu} %shear viscosity coefficient
\newcommand{\bulkvisc}{\eta} %bulk viscosity coefficient
\newcommand{\coup}{\varsigma} %elliptic relaxation coefficient
\newcommand{\paracoup}{\beta} %parabolic relaxation coefficient
\newcommand{\Peff}{p_\coup} %artificial pressure function
\newcommand{\eps}{\varepsilon} %hole size of porous domain
\newcommand{\rhoeps}{\rho_{\eps}} %density on porous domain
\newcommand{\uveceps}{\uvec_{\eps}} %velocity on porous domain
\newcommand{\ceps}{c_{\eps}} %relaxation parameter on porous domain
\newcommand{\trhoeps}{\tilde{\rho}_{\eps}} %zero extension of density
\newcommand{\tuveceps}{\tilde{\uvec}_{\eps}} %zero extension of velocity
\newcommand{\tceps}{\tilde{c}_{\eps}} %zero extension of relaxation parameter
\newcommand{\Eeps}{\mathbb{E}_{\eps}} %extension operator for relaxation parameter
\newcommand{\Omegaeps}{\Omega_{\eps}} %porous domain
\newcommand{\Bog}{\mathcal{B}} %Bogovskii operator on unperforated domain
\newcommand{\Bogeps}{\Bog_{\eps}} %Bogovskii operator on porous domain
\newcommand{\dimension}{d} %dimension variable
\newcommand{\Bogepsi}{\mathcal{B}_{\eps,i}}
\newcommand{\Aepsi}{A_i^\eps}
\newcommand{\Xieps}{\Xi^\eps}
\newcommand{\Xiepsi}{\Xi_i^\eps}
\newcommand{\thetaeps}{\vartheta^\eps}
\newcommand{\thetaepsi}{\vartheta^\eps_i}
\newcommand{\xepsi}{x_i^\eps}
\newcommand{\Bepsi}{B_i^\eps}
\newcommand{\Lepsi}{L_{i,\eps}}
\newcommand{\gvec}{\mathbf{g}}

\newtheorem{theorem}{Theorem}[section]
\newtheorem*{theorem*}{Theorem}

\newtheorem{lemma}{Lemma}[section]
\theoremstyle{remark}
\newtheorem{remark}{Remark}[section]
\allowdisplaybreaks
\theoremstyle{definition}
\newtheorem{definition}{Definition}[section]

\title[Homogenization of relaxed compressible viscous two-phase fluid model]{Homogenization of a relaxed compressible viscous two-phase fluid model in a domain with very tiny holes}

\author{Florian Oschmann}
\address{Faculty of Mathematics and Physics of the Charles University, Sokolovsk\'a 49/83, CZ-186 00 Praha, Czech Republic}
\email{\href{mailto:florian.oschmann@matfyz.cuni.cz}{florian.oschmann@matfyz.cuni.cz}}
\author{Florian Wendt}
\address{Institute of Applied Analysis and Numerical Simulation, University of Stuttgart, Pfaffenwaldring~57, D-70569 Stuttgart, Germany}
\email{\href{mailto:florian.wendt@mathematik.uni-stuttgart.de}{florian.wendt@mathematik.uni-stuttgart.de}}
\date{\today} 
\keywords{Homogenization; Navier--Stokes--Korteweg equations; two-phase flow; perforated domain}
\subjclass[2020]{35Q35; 76M50; 76T10; 76S05}
\begin{document}
\begin{abstract}
    We study an approximate system for the compressible Navier--Stokes--Korteweg equations in a bounded domain periodically perforated by small obstacles.
    As the size of the obstacles decrease much faster to zero than their mutual distances, we show in spatial dimension two and three that the limiting system remains unchanged.
    Our result applies for a large class of monotone and non-monotone pressure functions.
    In particular, it holds in the physically relevant case when the approximate system is used to describe the dynamics of a compressible viscous two-phase fluid extending the corresponding homogenization results known for the compressible Navier--Stokes equations in a single-phase setting.
\end{abstract}

\maketitle

%\tableofcontents

\section{Introduction}\label{sec:Intro}
Accurately describing the effective behavior of small scale interactions in a macroscopic fluid is of great interest for a wide range of applications.
Examples include the description of aerosols and sprays, but also gas flow through rock.
The derivation of such effective systems is usually termed \emph{homogenization} and mainly depends on the relation between the size of the obstacles and their mutual distances as well as on the fluid flow model under consideration.
In a single-phase setting, there is nowadays a vast literature on rigorous analytical results for the homogenization of Stokes, Navier--Stokes, and Navier--Stokes--Fourier equations, both incompressible and compressible, stationary and time-dependent.
All these results can roughly be sorted by the holes' size: if $\eps>0$ denotes the holes' mutual distance and $\eps^\alpha$ their radius, we can state in 3D:
\begin{itemize}
    \item if $1 \leq \alpha < 3$, then the limiting system is governed by Darcy's law;
    \item if $\alpha = 3$, then the limiting system is governed by Brinkman's law;
    \item if $\alpha > 3$, then the limiting system remains unchanged.
\end{itemize}
Analogues for the 2D setting are available as well. Additionally, there are various results focusing also on different parameters such as vanishing viscosity, low Mach number limit, or random distribution of holes.
Without being exhaustive, we just cite a few and refer the interested reader to \cite{Allaire1990a, Allaire1990b, Allaire1989, CioranescuDonatoEne1996, FeireislNamlyeyevaNecasova2016, GBNNR2026, Lu2020, Lu2021, Hoefer2022, HoeferLuOschmann2026, LuQian2024, LuYang2023, Mikelic1991,Shen2022,Marusic-PalokaMikelic1996} for incompressible fluids, and to \cite{BasaricChaudhuri2024, BellaOschmann2023, BLMO2025, DieningFeireislLu2017, FeireislLu2015, FeireislNovotnyTakahashi2010, HoeferKowalczykSchwarzacher2021, LuPokorny2021, LuSchwarzacher2018, Masmoudi2002, NecasovaOschmann2023, NecasovaPan2022, Oschmann2022, PokornySkrisovsky2021a} for compressible fluids, as well as to the references contained therein.
\par
Far less is known about the homogenization of fluids that can occur in two different phases.
Up to the authors' knowledge, there is only one rigorous homogenization result \cite{RohdeWolff2020} for a modification of the 3D compressible Navier--Stokes--Korteweg equations (NSKE) with a Van-der-Waals type pressure function.
In this reference, the authors focus on large holes with $\alpha=1$ and the modified capillary force given by some smooth convolution operator.
The advantage of this operator is that all possibly harming derivatives do not fall on the density but on a smooth interaction kernel inside the convolution operator. 
Their outcome is a compressible Darcy-type law inheriting the interaction kernel.
\par
In this paper we extend the landscape of rigorous homogenization results for compressible viscous two-phase fluid models.
As a two-phase fluid model on the pore-scale, we choose a parabolic relaxation system of the NSKE recently proposed in \cite{HKMR2020} with a Van-der-Waals type pressure function.
This system depends on two relaxation coefficients and serves as an approximate model to the compressible NSKE if the coefficients are chosen appropriately (see \Cref{sec:modelWkSolMain} for more details).
Compared to the result in \cite{RohdeWolff2020}, we are rather focusing on two different aspects: first, we consider tiny holes with $\alpha>d$, where $d \in \{2,3\}$ denotes the spatial dimension; second, the capillary force is replaced by some relaxation parameter which satisfies a linear parabolic equation with Neumann boundary condition on the holes.
This allows for weaker assumptions on smoothness for the capillary force at the cost of an additional boundary condition. Moreover, to the best of the authors' knowledge, in the regime of tiny holes there is no homogenization result for pressures that depend on the density in a \emph{non-monotone} fashion, which in view of applications and such-called Van-der-Waals type pressures is rather unsatisfactory; in here, we consider homogenization of such non-monotone pressure functions for the first time.
\par

\subsection*{Notations}
For a bounded domain $U \subseteq \RR^d$ with $d \in \{2,3\}$ we use the standard notation for Lebesgue and Sobolev spaces on $U$.
The space of smooth compactly supported functions is denoted by $\mathcal{D}(U)$, and the corresponding space of distributions on $U$ is denoted by $\mathcal{D}^\prime(U)$.
For an integrable function $f \in L^1(U)$, we denote its zero extension onto $\RR^d$ by $\tilde{f}$, i.e.,
\begin{align}\label{zero ext}
    \tilde{f}=f \quad \text{in } U,
    &&
    \tilde{f} = 0 \quad \text{in } \RR^d\setminus U.
\end{align}
Moreover, if $|U|>0$, we introduce the mean-value over $U$ as
\begin{equation*}
    \fint_U f := \frac{1}{|U|} \int_U f(x) \, \dd x.
\end{equation*}
Here, $|U|$ denotes the Lebesgue-measure of the domain $U$.
For some positive time $T>0$, we use the notation $U_T:=(0,T)\times U$.
The space of functions on $[0,T]$ ranging into some Banach space $X$ continuously with respect to the weak topology on $X$ will be denoted by $\Cw([0,T];X)$. For $s \in [1,\infty]$, we sometimes write instead of $L^s(U;\RR^d)$ or $L^s(U;\RR^{d\times d})$ just $L^s(U)$, if no ambiguities appear.
The same convention applies for other standard function spaces on $U$.
We further use the notation
\begin{equation*}
    s^\prime := \begin{cases}
        \frac{s}{s-1} &\text{if } s \in (1,\infty),
        \\
        \infty &\text{if } s=1,
        \\
        1 &\text{if } s=\infty,
    \end{cases}
    \qquad 
    s^\ast := \begin{cases}
        \frac{\dimension s}{\dimension -s} &\text{if } s \in [1,\dimension),
        \\
        \infty &\text{if } s \in [d,\infty]
    \end{cases}
\end{equation*}
Finally, if $s >1$, we denote by $s^-$ any number lying in the interval $[1,s)$.

\subsection*{Organization of the Paper}
In \Cref{sec:modelWkSolMain} we first introduce the perforated domain and state an extension lemma for functions defined on this domain.
Then we introduce the parabolic relaxation system of the NSKE, specify our assumptions on the pressure functions, and introduce the concept of finite energy weak solutions corresponding to the relaxation system.
At the end of \Cref{sec:modelWkSolMain} we state our main homogenization result.
In \Cref{sec:Hom3D} and \Cref{sec:Hom2D} we prove our main result in dimension three and two, respectively.
We close this work with some conclusions and directions for future works in \Cref{sec:Concl}.

\section{The two-phase model, weak solutions, and the main result}\label{sec:modelWkSolMain}
In this section, we specify our assumptions on the perforated domain, introduce the two-phase fluid model, and state our main result.
We begin with the perforated domain.
\subsection{The perforated domain}\label{subsec:perforated domain}
We focus on a bounded domain $\Omega \subset \RR^\dimension$, $\dimension\in \{2,3\}$, with smooth boundary, periodically\footnote{The assumption of periodicity can easily be relaxed, see e.g. \cite[Section~2.1]{OschmannPokorny2023}.} perforated by tiny holes of size
\begin{equation*}
    a_\eps:=
    \begin{cases}
        \eps^{\alpha},\,\,\alpha>3 &\text{if } \dimension=3,
        \\
        \exp(-\eps^{-\alpha}),\,\, \alpha>2 &\text{if } \dimension=2,
    \end{cases}
\end{equation*}
where $\eps>0$ denotes the mutual distance between the holes.
More precisely, we introduce for $\eps \in (0,1)$ the \emph{perforated domain} $\Omegaeps$ as
\begin{align}\label{def:domain}
    \Omega_\eps = \Omega \setminus \bigcup_{i \in K_\eps} B_{a_\eps}(\eps x_i), \qquad x_i \in \mathbb{Z}^\dimension, \qquad K_\eps = \{i \in \mathbb{Z}: B_{\eps}(\eps x_i) \Subset \Omega \},
\end{align}
see Figure~\ref{fig:perfDom}.
\begin{figure}[ht]
\centering
\begin{tikzpicture}[scale=.7]
\draw[black, thick] plot [smooth cycle] coordinates {(0,5) (-1,4) (0,-1) (8,-0.3) (7.7,2) (7,5) (3,5)};
\foreach \a in {0, 1, ..., 6}
	\foreach \b in {0, 1, 2, 3, 4}
        \draw (\a,\b) circle (.3cm);
\foreach \b in {0, 1, 2, 3}
\draw (7,\b)
        circle (.3cm);
\draw (1,4) -- (2,4);
\node at (1.5,4) [anchor=south] {$\eps$};
\draw (0,3) -- (-1.5,3);
\node at (-1.5,3) [anchor=east] {$B_{a_\eps}(\eps x_i),$};
\node at (-1.5,2) [anchor=east] {$x_i\in \mathbb{Z}^\dimension$};
\node at (-2,5) {$\Omega_\eps \subset \RR^\dimension$};
\end{tikzpicture} \caption{The perforated domain $\Omega_\eps$.} \label{fig:perfDom}
\end{figure}

It is clear that the number and measure of holes satisfy
\begin{align}\label{porous dom number density}
    |K_\eps| \leq C \eps^{-\dimension}, && \left| \bigcup_{i \in K_\eps} B_{a_\eps}(\eps x_i) \right| \leq C (a_\eps / \eps)^d \to 0,
\end{align}
for some constant $C>0$ that does not depend on $\eps$.
To state our main result at the end of this section, we need the following extension result from\cite[Lemma~2.1]{PokornySkrisovsky2021a} and \cite[Lemma~3.1]{LuPokorny2021}.
\begin{lemma}\label{lem:extEeps}
Let $\dimension  \in \{2,3\}$ and let $\Omega \subseteq \RR^{\dimension}$ be a bounded domain with smooth boundary.
For $\eps>0$, let $\Omega_\eps$ be defined as in \eqref{def:domain}. Then there exists an extension operator $\Eeps:W^{1,2}(\Omega_\eps)\to W^{1,2}(\Omega)$ such that, for any $\phi\in W^{1,2}(\Omega_\eps)$,
\begin{align*}
&\Eeps\phi=\phi\text{ in } \Omega_\eps,
\quad 
\|\nablax \Eeps\phi\|_{L^2(\Omega)}\leq C\,\|\nablax\phi\|_{L^2(\Omega_\eps)},
\end{align*}
and moreover, for any $1\leq s\leq\infty$,
\begin{align*}
\| \Eeps\phi\|_{L^s(\Omega)}\leq C\,\|\phi\|_{L^s(\Omega_\eps)},
\end{align*}
for some constant $C>0$ that does not depend on $\eps$.
%Furthermore, there exists an operator ${\tilde{E}_\eps:H^1_{\geq 0}(D_\eps)\to H^1_{\geq 0}(D)}$ with the same properties as above. Here $H^1_{\geq 0}$ denotes the Sobolev space of all non-negative functions in $H^1$. In particular, one may choose $\tilde{E}_\eps\phi:=\max\{0,E_\eps\phi\}$.
\end{lemma}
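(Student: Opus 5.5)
The construction is local: each hole is handled separately, using the scale separation $a_\eps \ll \eps$. For each $i \in K_\eps$ set $B_i := B_{a_\eps}(\eps x_i)$ and $A_i := B_{2a_\eps}(\eps x_i)\setminus \overline{B_i}$. Since $2a_\eps < \eps/2$ for small $\eps$, and $B_\eps(\eps x_i)\Subset\Omega$, the annuli $A_i$ and the balls $B_{2a_\eps}(\eps x_i)$ are pairwise disjoint and lie in $\Omega$. On $\Omegaeps\setminus\bigcup_i B_{2a_\eps}(\eps x_i)$ we set $\Eeps\phi=\phi$. Inside each hole $B_i$ we fill in an extension built from the values of $\phi$ on $A_i$.

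The local extension is obtained by reflection across the sphere of radius $a_\eps$, which keeps all constants scale-invariant. Translate to the origin and rescale by $a_\eps$. The annulus becomes $A:=B_2\setminus\overline{B_1}$ and the hole becomes $B_1$. Define the inversion $R(y)=y/|y|^2$, which maps $B_1\setminus\{0\}$ onto $\RR^d\setminus\overline{B_1}$. For a function $\psi$ on $A$ we set, for $y\in B_1$,
\begin{equation*}
E\psi(y)=\chi(|y|)\,\psi(R(y)).
\end{equation*}
Here $\chi$ is a smooth cutoff with $\chi=1$ near $1$ and $\chi=0$ for $|y|\le 3/4$. With this choice $R(y)\in A$ on the support of $\chi$, and $R$ is a smooth diffeomorphism there with Jacobian bounded above and below.

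We then check the estimates on the unit scale. Traces match on $\partial B_1$, so $E\psi\in W^{1,2}(B_2)$. The $L^s$ bound $\|E\psi\|_{L^s(B_1)}\le C\|\psi\|_{L^s(A)}$ holds for every $s\in[1,\infty]$ by the change of variables. The gradient satisfies $|\nabla E\psi|\le C(|\nabla\psi\circ R|+|\psi\circ R|)$, and this zero-order term is the main obstacle: it is not scale-invariant and would not give a bound by $\|\nablax\phi\|_{L^2}$ alone. The fix is to extend $\psi-m$ instead of $\psi$, where $m:=\fint_A\psi$, and to define the extension inside the hole as $m+E(\psi-m)$. With $\chi$ replaced by a cutoff interpolating towards the constant $m$ (equivalently $E\psi:=m+\chi(\psi\circ R-m)$), Poincaré's inequality on $A$ gives
\begin{equation*}
\|\nabla E\psi\|_{L^2(B_1)}\le C\|\nabla\psi\|_{L^2(A)}.
\end{equation*}
The $L^s$ bound still holds, since $|m|\le |A|^{-1/s}\|\psi\|_{L^s(A)}$, and the constant $m$ is continuous across the sphere $|y|=3/4$.

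Finally we scale back. In any dimension $d$, the Dirichlet integral $\int|\nabla\psi|^2$ scales like $a_\eps^{d-2}$ on both sides of the inequality, so the gradient constant is independent of $a_\eps$. The mean and the $L^s$ norms likewise transform identically on $A_i$ and $B_i$. Summing over the disjoint cells $i\in K_\eps$ gives
\begin{equation*}
\|\nablax\Eeps\phi\|_{L^2(\Omega)}^2\le\|\nablax\phi\|_{L^2(\Omegaeps)}^2+C\sum_i\|\nablax\phi\|_{L^2(A_i)}^2\le C\|\nablax\phi\|_{L^2(\Omegaeps)}^2.
\end{equation*}
For $s<\infty$, the $s$-th powers of the $L^s$ norms sum in the same way. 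For $s=\infty$, the bound follows because each piece is pointwise bounded by $\|\phi\|_{L^\infty(A_i)}$. The operator is linear, and it is independent of $s$ by construction.
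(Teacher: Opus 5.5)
Your proof is correct and is essentially the standard construction behind \cite[Lemma~3.1]{LuPokorny2021} and \cite[Lemma~2.1]{PokornySkrisovsky2021a}, which the paper cites instead of proving the lemma itself. That construction reflects the mean-subtracted function from the annulus $B_{2a_\eps}(\eps x_i)\setminus\overline{B_{a_\eps}(\eps x_i)}$ into the hole, fixes the constants on the unit scale via Poincar\'e's inequality and the scale invariance of the Dirichlet integral and $L^s$ norms, and sums over the disjoint cells. Your restriction to $\eps$ small enough that $4a_\eps\le\eps$ is harmless, since the paper only uses the limit $\eps\to0$.
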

For $f \in L^1(0,T;W^{1,2}(\Omega_\eps))$, we will use the symbol $\Eeps$ to denote the extension that acts on the spatial variable only, that is,
\begin{equation*}
    (\Eeps f)(t,x) := \Eeps(f(t,\cdot))(x) \quad \text{for a.e.~} (t,x) \in \OmegaT.
\end{equation*}
In the same spirit, we will use the symbol $\tilde{\cdot}$ to denote the spatial zero extension for a function $g\in L^1(0,T;L^1(\Omegaeps))$, that is,
\begin{align*}
    \tilde{g}(t,x):=\widetilde{g(t,\cdot)}(x) \quad \text{for a.e.~} (t,x) \in \OmegaT,
\end{align*}
with the spatial zero extension $\widetilde{g(t,\cdot)}$ being defined in \eqref{zero ext}.
\subsection{The two-phase fluid model}
For some positive time $T>0$, we consider a parabolic relaxation system of the NSKE recently proposed in \cite{HKMR2020}.
This system governs the dynamics of a two-phase fluid occupying the perforated domain $\Omegaeps$ by the \emph{density} $\rhoeps\colon [0,T]\times\Omegaeps \to \RR_{\geq0}$, the \emph{velocity} $\uveceps \colon [0,T]\times\Omegaeps \to \RR^\dimension$, and the \emph{relaxation parameter} $\ceps \colon [0,T]\times\Omegaeps \to \RR$ which satisfy
\begin{equation}\label{rNSKE}
    \left\{
\begin{aligned}
    &\partial_t \rhoeps 
    +
    \div(\rhoeps\uveceps)
    = 0
    &&\text{in } (0,T)\times\Omegaeps,
    \\[0.15cm]
    &\partial_t(\rhoeps\uveceps)
    +
    \div(\rhoeps\uveceps\otimes\uveceps)
    +
    \nablax p(\rhoeps)
    \\
    &\quad=
    \div\Svisc(\nablax\uveceps)
    +
    \coup \rhoeps\nablax(\ceps-\rhoeps)
    &&\text{in } (0,T)\times\Omegaeps,
    \\[0.15cm]
    &\paracoup\partial_t \ceps 
    -
    \kappa \Deltax \ceps
    +
    \coup(\ceps-\rhoeps) 
    = 
    0
    &&\text{in } (0,T)\times\Omegaeps,
    \\[0.15cm]
    &{\uveceps}=0, \qquad \nablax \ceps \cdot \mathbf{n}_{\partial\Omegaeps} = 0
    &&\text{on } (0,T)\times\partial\Omegaeps,
    \\[0.15cm]
    &\rhoeps(0)=\rho_{0\eps},\qquad (\rhoeps\uveceps)(0)=(\rho\uvec)_{0\eps},\qquad \ceps(0)=c_{0 \eps}
    &&\text{in } \Omegaeps.
\end{aligned}
\right.
\end{equation}
In \eqref{rNSKE} we have used the notation
\begin{equation*}
    \Svisc(M) := 2\shearvisc\Big(\frac{M + M^\mathrm{T}}{2} - \frac{1}{\dimension} \mathrm{Tr}(M) \Big) 
    +
    \bulkvisc \,\mathrm{Tr}(M) 
    \qquad \forall\, M \in \RR^{d\times d},
\end{equation*}
with constants $\shearvisc>0$ and $\bulkvisc \geq 0$ denoting the shear and bulk viscosity of the two-phase fluid, respectively.
Moreover, $\kappa >0$ denotes the constant \emph{capillarity coefficient} and $\coup,\paracoup>0$ denote the constant \emph{coupling coefficients}.
For the pressure function $p\colon[0,\infty)\to[0,\infty)$ we assume the decomposition
\begin{equation}\label{dec prs reg}
    p= h+q,
    \qquad 
    h\in C^0([0,\infty))\cap C^1((0,\infty)),
    \quad 
    q\in C^\infty_c([0,\infty)),
\end{equation}
with $h$ and $q$ satisfying
\begin{equation}\label{dec prs grwth}
    \begin{aligned}
        h(0)=0=q(0),
        \quad 
        q\leq 0,
        \quad 
        h(r) \leq C_h + d_h r^{\gamma},
        \quad 
        c_hr^{\gamma-1}
        \leq 
        h^\prime(r)
        \quad \forall\, r \in (0,\infty),
    \end{aligned}
\end{equation}
for some positive constants $\gamma \in (1,\infty)$ and $C_h,c_h,d_h\in (0,\infty)$.
By setting $q=0$ in \eqref{dec prs reg}--\eqref{dec prs grwth} we account for a large class of monotone pressure functions that are frequently used in the context of the single-phase compressible Navier--Stokes equations (see e.g.~\cite{FeireislNovotnyPetzeltova2001,HoeferNecasovaOschmann2026,Masmoudi2002,OschmannPokorny2023}).
In a two-phase setting, the pressure function $p\colon [0,\infty) \to [0,\infty)$ is assumed to be of \emph{Van-der-Waals type}, that is, smooth, monotonically decreasing on some compact interval lying in $(0,\infty)$, and monotonically increasing on its complement.
Note that \eqref{dec prs reg}--\eqref{dec prs grwth} allows $p$ to have a compact non-monotone region and thus, to be of Van-der-Waals type.
For more details concerning the two-phase modeling we refer to \cite{HKMR2020}.
\par
From now on, we call system \eqref{rNSKE} the \emph{relaxed Navier--Stokes--Korteweg equations (rNSKE)}.
Formally, one can show that solutions of the rNSKE converge in the relaxation limit $\coup \to \infty$ and $\paracoup\to 0$ to a solution of the NSKE
\begin{equation*}
    \left\{
\begin{aligned}
    &\partial_t \rhoeps 
    +
    \div(\rhoeps\uveceps)
    = 0
    &&\text{in } (0,T)\times\Omegaeps,
    \\[0.15cm]
    &\partial_t(\rhoeps\uveceps)
    +
    \div(\rhoeps\uveceps\otimes\uveceps)
    +
    \nablax p(\rhoeps)
    =
    \div\Svisc(\nablax\uveceps)
    +
    \kappa \rhoeps \nablax \Deltax \rhoeps
    &&\text{in } (0,T)\times\Omegaeps,
    \\[0.15cm]
    &{\uveceps}=0, \qquad \nablax \rhoeps \cdot \mathbf{n}_{\partial\Omegaeps} = 0
    &&\text{on } (0,T)\times\partial\Omegaeps,
    \\[0.15cm]
    &\rhoeps(0)=\rho_{0\eps},\qquad (\rhoeps\uveceps)(0)=(\rho\uvec)_{0\eps},
    &&\text{in } \Omegaeps.
\end{aligned}
\right.
\end{equation*}
In this relaxation limit, the corresponding sequence of relaxation coefficients converges to the density of the NSKE. This behavior has also been observed numerically in \cite{HKMR2020}.
Recently, a rigorous convergence result for the relaxation limit in the framework of finite energy weak solutions has been obtained in \cite{ChaudhuriRohdeWendt2025} based on a relative energy approach.
For more details concerning the NSKE we refer to \cite{AndersonMcFaddenWheeler1998,DunnSerrin1985}.
\par
Introducing the \emph{artificial pressure function}
\begin{equation}\label{p artificial}
    \Peff(r):=p(r)+\frac{\coup}{2} r^2 \qquad \forall\, r \in [0,\infty)
\end{equation}
we can rewrite the momentum equation of the rNSKE \eqref{rNSKE} as
\begin{equation*}
    \partial_t(\rhoeps\uveceps) + \div(\rhoeps\uveceps\otimes\uveceps) +\nablax p_\coup(\rhoeps) 
    =
    \div\Svisc(\nablax\uveceps) + \coup\rhoeps\nablax \ceps.
\end{equation*}
In a two-phase setting, the artificial pressure function $\Peff$ is monotonically increasing, provided $\coup>0$ is large enough.
This renders the underlying first-order system hyperbolic and facilitates the numerical treatment of the rNSKE in comparison with the NSKE.
We refer to \cite{HKMR2020} for more details including numerical simulations of a compressible viscous two-phase fluid using this reformulation.
\par
The relaxation parameter is a purely artificial quantity.
It is also possible to use an elliptic relaxation equation for the relaxation parameter, i.e., \eqref{rNSKE} with $\paracoup=0$ and no initial condition for the relaxation parameter $c$.
Such a relaxation system has been proposed in \cite{Rohde2010} and was numerically investigated in \cite{NeusserRohdeSchleper2015}.
For the corresponding relaxation limit a rigorous convergence result has been obtained in \cite{GiesselmannLattanzioTzavaras2017}.

\subsection{Finite energy weak solutions and the main result}
For our homogenization result we use the framework of finite energy weak solutions to the rNSKE.
This concept is well-known in the context of the compressible Navier--Stokes equations (see~\cite{FeireislNovotnyPetzeltova2001,Lions1998}).
The corresponding definition for the rNSKE has been motivated in \cite{OschmannWendt2026-exist} and reads on the perforated domain $\Omegaeps$ as follows:
\begin{definition}
    Let $d\in \{2,3\}$, $\Omegaeps$ be defined as in \eqref{def:domain}, and $T,\coup,\paracoup,\kappa,\shearvisc>0$, $\bulkvisc\geq 0$.
    Assume that $p$ satisfies \eqref{dec prs reg}--\eqref{dec prs grwth} with $\gamma \in (1,\infty)$ and let initial conditions $\rho_{0\eps}\in L^{\tgamma}(\Omegaeps)$, $(\rho\uvec)_{0\eps}\in L^{\frac{2\tgamma}{\tgamma+1}}(\Omegaeps)$, $c_{0\eps}\in W^{1,2}(\Omegaeps)$, where $\tgamma:=\max\{2,\gamma\}$, be given satisfying
    \begin{equation}\label{wkSol IC compat}
        \rho_{0\eps}\geq 0 \quad \text{a.e.},
        \qquad 
        (\rho\uvec)_{0\eps} = 0
        \quad \text{on } \{\rho_{0\eps}=0\},
        \qquad 
        \frac{\big|(\rho\uvec)_{0\eps}\big|^2}{\rho_{0\eps}}\in L^1(\Omegaeps).
    \end{equation}
    Then we call the triplet $(\rhoeps,\uveceps,\ceps)$ a \emph{finite energy weak solution} to the rNSKE on $(0,T)\times\Omegaeps$ emanating from $(\rho_{0\eps},(\rho\uvec)_{0\eps},c_{0\eps})$ if:
    \begin{itemize}
        \item The solution satisfies the regularity
        \begin{equation}\label{wkSol reg}
            \begin{aligned}
                &\rhoeps \in \Cw([0,T];L^{\tgamma}(\Omegaeps)),
                && 
                \rhoeps \geq 0 \,\,\, \text{a.e. in~} (0,T)\times\Omegaeps,
                \\
                &\uveceps \in L^2(0,T;W^{1,2}_0(\Omegaeps)),
                &&
                (\rhoeps\uveceps)\in \Cw([0,T];L^{\frac{2\tgamma}{\tgamma+1}}(\Omegaeps)),
                \\
                &\partial_t c_\eps \in L^2(\OmegaT),
                &&
                c_\eps \in C([0,T];W^{1,2}(\Omegaeps))\cap L^2(0,T;W^{2,2}(\Omegaeps));
            \end{aligned}
        \end{equation}
        \item the solution satisfies $\eqref{rNSKE}_1$--$\eqref{rNSKE}_3$ in the sense of integral identities, that is,
        \begin{align}
            &\int_0^T \int_{\Omegaeps}
            \rhoeps \partial_t \varphi + \rhoeps\uveceps \cdot \nablax \varphi 
            \, \dd x \, \dd t
            = 0,\label{wkSol cont}
            \\
            &\int_0^T \int_{\Omegaeps}
            \paracoup \ceps \partial_t \varphi - \kappa \nablax \ceps \cdot \nablax \varphi - \coup (\ceps-\rhoeps) \varphi 
            \, \dd x \, \dd t=0\label{wkSol parab}
        \end{align}
        for any test function $\varphi \in \mathcal{D}(\RR_T^\dimension)$, and
        \begin{equation}\label{wkSol mom}
            \begin{aligned}
                &\int_0^T \int_{\Omegaeps}
                \rhoeps \uveceps \cdot \varphivec + \rhoeps\uveceps \otimes \uveceps : \nablax \varphivec + \Peff(\rhoeps) \div\varphivec 
                \, \dd x \, \dd t
                \\
                &\quad =
                \int_0^T \int_{\Omegaeps} 
                \Svisc(\nablax\uveceps):\nablax \varphivec
                -
                \coup\rhoeps \nablax \ceps \cdot \varphivec
                \, \dd x \, \dd t
            \end{aligned}
        \end{equation}
        for any test function $\varphivec \in \mathcal{D}((0,T)\times\Omegaeps;\RR^\dimension)$, where $p_\coup$ is defined in \eqref{p artificial};
        \item the solution satisfies the initial conditions
        \begin{equation}\label{wkSol IC}
            \rhoeps(0)=\rho_{0\eps},
            \qquad 
            (\rhoeps\uveceps)(0)=(\rho\uvec)_{0\eps},
            \qquad 
            \ceps(0)=c_{0\eps}
            \qquad 
            \text{a.e.~in } \Omegaeps;
        \end{equation}
        \item the energy inequality
        \begin{equation}\label{wkSol energy}
        \begin{aligned}
            &-\int_0^T \partial_t \psi\int_{\Omegaeps} \frac{1}{2}\rhoeps|\uveceps|^2 + W(\rhoeps) + \frac{\coup}{2}|\rhoeps-\ceps|^2 + \frac{\kappa}{2}|\nablax \ceps|^2 
            \, \dd x \, \dd t
            \\
            &\quad 
            +\int_0^T \psi \int_{\Omegaeps}\Svisc(\nablax\uveceps):\nablax\uveceps + \paracoup |\partial_t \ceps|^2 
            \, \dd x \, \dd t
            \\
            &\leq 
            \psi(0) \int_{\Omegaeps} \frac{\big|(\rho\uvec)_{0\eps}\big|^2}{2\rho_{0\eps}} + W(\rho_{0\eps}) + \frac{\coup}{2}|\rho_{0\eps} - c_{0\eps}|^2 + \frac{\kappa}{2}|\nablax c_{0\eps}|^2 
            \, \dd x
        \end{aligned}
        \end{equation}
        holds for any $\psi \in C^\infty_c([0,T))$ with $\psi \geq 0$, where
        \begin{equation*}
            W\colon [0,\infty)\to \RR,
            \qquad 
            r\mapsto W(r):=r\int_1^{r}\frac{p(z)}{z^2}\, \dd z
        \end{equation*}
        denotes the pressure potential corresponding to $p$.
    \end{itemize}
\end{definition}
Concerning the global-in-time existence of finite energy weak solutions to the rNSKE we have the following result:
\begin{theorem}[see {\cite[Theorem~2.1]{OschmannWendt2026-exist}}]
    Let $d\in \{2,3\}$, $\Omegaeps$ be defined as in \eqref{def:domain}, and $T,\coup,\paracoup,\kappa,\shearvisc>0$, $\bulkvisc\geq 0$.
    Assume that $p$ satisfies \eqref{dec prs reg}--\eqref{dec prs grwth} with $\gamma \in (1,\infty)$ and let initial conditions $\rho_{0\eps}\in L^{\tgamma}(\Omegaeps)$, $(\rho\uvec)_{0\eps}\in L^{\frac{2\tgamma}{\tgamma+1}}(\Omegaeps)$, $c_{0\eps}\in W^{1,2}(\Omegaeps)$, $\tgamma:=\max\{2,\gamma\}$, be given satisfying \eqref{wkSol IC compat}.
    Then there exists a finite energy weak solution $(\rhoeps,\uveceps,\ceps)$ to the rNSKE on $(0,T)\times\Omegaeps$ emanating from $(\rho_{0\eps},(\rho\uvec)_{0\eps},c_{0\eps})$.
\end{theorem}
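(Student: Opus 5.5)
The plan is the classical Lions--Feireisl programme for compressible Navier--Stokes, with the relaxation parameter handled as a linear parabolic component. Since $\Omegaeps$ is, for fixed $\eps$, a bounded Lipschitz domain, I would run the argument on a general bounded Lipschitz domain $D$; the perforation plays no role here. The approximation has three levels. First, I add the artificial viscosity $\delta$-term $\varepsilon_1\Deltax\rho$ (with homogeneous Neumann condition) to the continuity equation. Second, I add an artificial pressure $\delta\rho^{b}$ with $b$ large in the momentum equation. Third, I use a Faedo--Galerkin discretisation of the velocity. For a given Galerkin velocity, I solve the regularised parabolic continuity equation for $\rho$, which is positive and smooth. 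I then solve the linear heat equation $\paracoup\partial_t c-\kappa\Deltax c=-\coup(c-\rho)$ with Neumann data for $c$, which gives $c\in C([0,T];W^{1,2})\cap L^2(W^{2,2})$ and $\partial_t c\in L^2$ by maximal regularity. A Schauder fixed point argument then produces Galerkin solutions.

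The key structural point is the energy identity. I test the momentum equation with $\uvec$ and the $c$-equation with $\partial_t c$. I also use $\coup\rho\nablax c\cdot\uvec=-\coup c\,\div(\rho\uvec)=\coup c\,\partial_t\rho$ up to the $\varepsilon_1$ correction. Combining these, the coupling terms assemble into $\frac{\mathrm d}{\mathrm dt}\frac{\coup}{2}|\rho-c|^2$, with the $\frac{\coup}{2}\rho^2$ part absorbed through $\Peff$. This yields the energy inequality \eqref{wkSol energy}. For the pressure potential, the decomposition \eqref{dec prs reg}--\eqref{dec prs grwth} gives $W(r)\ge c r^\gamma - C(1+r)$, because $q$ is bounded and compactly supported. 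In addition, $\frac{\coup}{2}|\rho-c|^2$ together with the $L^2$ control of $c$ gives $\rho\in L^\infty(L^2)$. Together these give uniform bounds $\rho\in L^\infty(L^{\tgamma})$, $\sqrt\rho\uvec\in L^\infty(L^2)$, $\uvec\in L^2(W^{1,2}_0)$, and $c\in L^\infty(W^{1,2})$ with $\partial_t c\in L^2$. I then pass to the Galerkin limit and afterwards to $\varepsilon_1\to0$ and $\delta\to0$.

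For the two density limits I need improved pressure integrability. I obtain it from the Bogovskii operator $\Bog$ on $D$ applied to $\rho^\theta-\fint\rho^\theta$, which gives $\Peff(\rho)\rho^\theta\in L^1$ for some $\theta>0$. The extra force $\coup\rho\nablax c$ is harmless here because $\nablax c\in L^2(L^{2^\ast})$ after using $W^{2,2}$.

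The main obstacle is strong convergence of the density, which is needed to identify $\overline{\Peff(\rho)}$ in the limit. I would prove the effective viscous flux identity
\[\overline{\Peff(\rho)T_k(\rho)}-(2\shearvisc(1-1/d)+\bulkvisc)\overline{T_k(\rho)\div\uvec}=\overline{\Peff(\rho)}\,\overline{T_k(\rho)}-(2\shearvisc(1-1/d)+\bulkvisc)\overline{T_k(\rho)}\div\uvec.\]
For this I test with $\nablax\Deltax^{-1}T_k(\rho)$ and use the div--curl lemma. The term $\coup\rho\nablax c$ converges strongly enough, because $c$ is compact by Aubin--Lions, so it does not disturb the identity.

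Non-monotonicity is treated by splitting $\Peff=(h+\frac\coup2 r^2)+q$. The first part is monotone. The second part is Lipschitz, so $\overline{q(\rho)T_k(\rho)}-\overline{q(\rho)}\,\overline{T_k(\rho)}$ is bounded by $C\,\overline{|\rho-\rho'|}$-type defect terms. Following Feireisl's argument for non-monotone pressures, I would then run the renormalized equation for $L_k(\rho)$ and a Gronwall argument on the defect $\int(\overline{\rho\log\rho}-\rho\log\rho)$, controlling the $q$-contribution by the oscillation defect measure. This shows the defect vanishes, so $\rho\to\rho$ strongly, and this completes the limit passages and yields the claimed finite energy weak solution.
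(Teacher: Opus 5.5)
Your outline follows the route this paper attributes to the cited existence result. The statement is only quoted here, but the paper later appeals to \cite[Proposition~A.1]{OschmannWendt2026-exist} for the effective viscous flux with $\coup\rho\nablax c$ as a perturbation, and to \cite[Lemma~5.3]{OschmannWendt2026-exist} for strong convergence via Feireisl's non-monotone argument \cite{Feireisl2002}; both match your steps, as does the integrability $\tgamma=\max\{2,\gamma\}$ coming from the $\frac{\coup}{2}|\rho-c|^2$ term. Two details need more care than you give them: $c\in L^2(0,T;W^{2,2}(\Omegaeps))$ in \eqref{wkSol reg} relies on $\partial\Omegaeps$ being smooth, since Neumann $W^{2,2}$-regularity fails on general Lipschitz domains, so you should not pass to a general Lipschitz $D$; and because $W$ is non-convex and the $\varepsilon_1$-level energy balance contains the sign-indefinite terms $\varepsilon_1\int\frac{q'(\rho)}{\rho}|\nablax\rho|^2$ and $\varepsilon_1\coup\int\nablax c\cdot\nablax\rho$, the energy inequality \eqref{wkSol energy} cannot come from lower semicontinuity alone but only after strong convergence of $\rho$ and $\nablax c$, with these terms absorbed by a Gronwall/$\rho\log\rho$ bound and shown to vanish via renormalization of the limit density.
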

We will analyze the behavior of the rNSKE as $\eps \to 0$ in the framework of finite energy weak solutions. Similarly to homogenization results in the literature (see e.g. \cite{BellaOschmann2023, DieningFeireislLu2017, OschmannPokorny2023} for $d=3$ and \cite{NecasovaOschmann2023,NecasovaPan2022,Bravin2024} for $d=2$), since the holes are tiny (that is, $\alpha > 3$ for $d=3$ and $\alpha>2$ for $d=2$), we don't expect that they will influence the limiting equations.
In fact, we will show the following result which is the main result of this paper.
\begin{theorem}\label{thm:Hom}
    Let $\coup,\paracoup,\kappa,T,\mu>0$, $\eta \geq 0$, and let $\Omega \subseteq \RR^\dimension$, $\dimension\in \{2,3\}$, be a bounded domain with smooth boundary.
    For $\eps \in (0,1)$, let $\Omegaeps$ be defined as in \eqref{def:domain}.
    Assume that $p$ satisfies \eqref{dec prs reg}--\eqref{dec prs grwth}, and suppose that $\gamma$ and $\alpha$ satisfy the relations
    \begin{align}
        &\gamma > 3, \qquad  \alpha > \max\Big\{3, \frac{2\gamma-3}{\gamma-3}\Big\}
        && \text{if } \dimension = 3,\label{thm:Hom3D restr gamma,alpha}
        \\
        &\gamma>2, \qquad \alpha>2
        &&\text{if } \dimension =2.\label{thm:Hom2D restr gamma,alpha}
    \end{align}
    For $\eps\in(0,1)$, let initial conditions $(\rho_{0\eps},(\rho\uvec)_{0\eps},c_{0\eps})\in L^\gamma(\Omega)\times L^{\frac{2\gamma}{\gamma+1}}(\Omega;\RR^\dimension)\times W^{1,2}(\Omega)$ be given satisfying \eqref{wkSol IC compat} and
    \begin{equation}\label{thm:Hom IC}
    \begin{aligned}
        &\tilde{\rho}_{0\eps} \to \rho_0 \quad \text{in } L^\gamma(\Omega),
        \quad 
        \frac{|\widetilde{(\rho\uvec)}_{0\eps}|^2}{\tilde{\rho}_{0\eps}}\to \frac{|(\rho\uvec)_0|^2}{\rho_0}
        \quad \text{in } L^1(\Omega),
        \\
        &\Eeps c_{0\eps} \to c_0 \quad \text{in } W^{1,2} (\Omega),
    \end{aligned}
    \end{equation}
    where $\tilde{\cdot}$ denotes the spatial zero extension, and $\Eeps$ denotes the extension operator from \Cref{lem:extEeps}.
    Moreover, let $(\rhoeps,\uveceps,\ceps)$ be a finite energy weak solution to the rNSKE on $(0,T)\times\Omegaeps$ emanating from $(\rho_{0\eps},(\rho\uvec)_{0\eps},c_{0\eps})$.    
    Then there exists a non-relabeled subsequence such that
    \begin{align*}
        &\tilde{\rho}_\eps \to \rho \quad \text{in } \Cw([0,T]; L^\gamma(\Omega)),
        &&
        \tilde{\uvec}_\eps \weak \uvec \quad  \text{in } L^2(0,T; W^{1,2}_0(\Omega)),\\
        &\Eeps c_\eps \weakstar c \quad \text{in } L^\infty(0,T; W^{1,2}(\Omega)),
        &&
        (\Eeps\ceps,\tceps) \to (c,c) \quad \text{in } C([0,T];L^2(\Omega;\RR^2)),
    \end{align*}
    and the triplet $(\rho, \uvec, c)$ is a finite energy weak solution of the rNSKE on $(0,T)\times\Omega$ emanating from $(\rho_0,(\rho\uvec)_0,c_0)$.
\end{theorem}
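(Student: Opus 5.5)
We follow the by now standard strategy for compressible flows in domains with tiny holes (cf.\ \cite{DieningFeireislLu2017,LuSchwarzacher2018,BellaOschmann2023,NecasovaPan2022}): uniform bounds, extension to $\Omega$, passage to the limit in the linear terms, and strong convergence of the density via the effective viscous flux, now in the presence of the non-monotone part $q$ and the coupling term $\coup\rhoeps\nablax\ceps$.

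\textbf{Uniform bounds.} First we derive $\eps$-uniform bounds from the energy inequality \eqref{wkSol energy} and the convergence of the initial data \eqref{thm:Hom IC}. Since $q\in C_c^\infty$ and $h'\geq c_h r^{\gamma-1}$, the potential $W$ is bounded from below by $c r^\gamma - C(1+r)$. Together with mass conservation this gives bounds on $\rhoeps$ in $L^\infty(0,T;L^\gamma)$, on $\sqrt{\rhoeps}\uveceps$ in $L^\infty L^2$, and on $\nablax\uveceps$ in $L^2L^2$; Korn and Poincaré then bound $\tuveceps$ in $L^2W^{1,2}_0(\Omega)$, since zero extension preserves $W^{1,2}_0$. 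For the relaxation parameter we get $\ceps$ bounded in $L^\infty W^{1,2}$ and $\partial_t\ceps$ bounded in $L^2L^2$. By \Cref{lem:extEeps}, $\Eeps\ceps$ is bounded in $L^\infty W^{1,2}(\Omega)$. Its time derivative is $\Eeps\partial_t\ceps$, which is bounded in $L^2L^2$ since $\Eeps$ is linear and $L^2$-bounded. Aubin--Lions then yields $\Eeps\ceps\to c$ in $C([0,T];L^2)$. Because $\tceps-\Eeps\ceps$ is supported in the holes, whose measure tends to zero, and $\Eeps\ceps$ is bounded in $L^\infty L^6$ (or $L^\infty L^s$ for all $s<\infty$ if $d=2$), we also get $\tceps\to c$.

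\textbf{Improved pressure integrability.} Next we establish higher pressure integrability, $\trhoeps\in L^{\gamma+\theta}(\OmegaT)$ uniformly. We test the momentum equation with $\Bogeps(\rhoeps^\theta-\fint\rhoeps^\theta)$, where $\Bogeps$ is the Bogovskii operator on $\Omegaeps$. For $\alpha>3$ (resp.\ the exponential holes when $d=2$) this operator is bounded from $L^r_0$ to $W^{1,r}_0$ uniformly in $\eps$ for $r$ in a range depending on $\alpha$; this is the source of the restriction $\alpha>\frac{2\gamma-3}{\gamma-3}$. The term $\coup\rhoeps\nablax\ceps\cdot\Bogeps(\cdot)$ is controlled since $\nablax\ceps\in L^\infty L^2$ and $\rhoeps\in L^\infty L^\gamma$ with $\gamma>3$ (resp.\ $\gamma>2$). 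The convective terms require $\rhoeps|\uveceps|^2$ in a suitable $L^p$, which again needs $\gamma>3$.

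\textbf{Extended equations and passage to the limit.} We then show that the zero extensions satisfy the equations on all of $\Omega$ up to remainders vanishing as $\eps\to0$. Continuity is automatic since $\uveceps=0$ on the holes. For the parabolic equation we test \eqref{wkSol parab} with any $\varphi\in\mathcal D(\RR^d_T)$: the integrals over $\Omegaeps$ converge because the holes' measure vanishes and the integrands are bounded in $L^{1+}$, so Neumann data on holes is handled for free. For momentum, with $\varphivec\in\mathcal D((0,T)\times\Omega)$, we use cutoff functions $g_\eps$ vanishing on the holes, equal to $1$ away from them, with $\|\nabla g_\eps\|_{L^r}\to0$ for $r$ in the admissible range (for $d=3$ this means $r<\frac{3(\alpha-... )}{}$, essentially $\eps^{(\alpha(3-r)-3)/r}\to0$; for $d=2$ logarithmic cutoffs), and test with $g_\eps\varphivec$. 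The error terms contain $\trhoeps|\tuveceps|^2$, $\Peff(\trhoeps)$ and $\nabla\tuveceps$ paired with $\nabla g_\eps$; these vanish thanks to the improved integrability and the relation between $\alpha$ and $\gamma$. Weak limits are then identified: $\trhoeps\to\rho$ in $\Cw L^\gamma$, $\trhoeps\tuveceps\to\rho\uvec$ and $\trhoeps\tuveceps\otimes\tuveceps\to\rho\uvec\otimes\uvec$ via compactness in time and $L^2W^{1,2}$ bounds as usual, and $\trhoeps\nablax\tceps\weak\rho\nablax c$ once $\rho$ is strongly convergent (or already by strong convergence of $\nabla \Eeps c_\eps$, see below). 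Only $\overline{\Peff(\rho)}$ remains to be identified.

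\textbf{Main obstacle: strong convergence of the density.} This is the step we expect to be hardest. We use the effective viscous flux identity
\[\overline{\Peff(\rho)T_k(\rho)}-(2\mu(1-1/d)+\eta)\overline{T_k(\rho)\div\uvec}=\overline{\Peff(\rho)}\,\overline{T_k(\rho)}-(2\mu(1-1/d)+\eta)\overline{T_k(\rho)}\div\uvec,\]
obtained by testing the extended momentum equations with $\nabla\Delta^{-1}[T_k(\trhoeps)]$ localized. The coupling term $\coup\trhoeps\nablax\tceps$ is compact here: $\nablax\Deltax^{-1}$ gains a derivative, and $\tceps$ is compact in $L^2$ with gradient bounded. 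The remainders coming from the holes vanish as in the previous step. Since $\Peff=h+\frac{\coup}{2}r^2+q$ and $h+\frac{\coup}{2}r^2$ is monotone, we use the renormalized continuity equation (valid since $\rho\in L^2$, $\gamma>2$) and the standard oscillation defect argument of Feireisl to show $\overline{\rho\log\rho}=\rho\log\rho$. The non-monotone compact part $q$ is treated as in Feireisl's non-monotone theory: $q$ is Lipschitz, so $|\overline{q(\rho)T_k}-\overline{q}\,\overline{T_k}|$ is controlled by $C\int(\overline{\rho\log\rho}-\rho\log\rho)$ plus terms vanishing as $k\to\infty$. A Gronwall argument then gives zero defect, provided the initial data converge strongly, which is ensured by \eqref{thm:Hom IC}. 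Hence $\trhoeps\to\rho$ strongly in $L^1$, so $\overline{\Peff(\rho)}=\Peff(\rho)$.

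Finally, the energy inequality passes to the limit by weak lower semicontinuity, with the initial energies converging by \eqref{thm:Hom IC}. The attainment of the initial data follows from the $\Cw$ convergences.
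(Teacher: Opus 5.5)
The gap is in your extension of the momentum equation with a scalar cut-off $g_\eps$. Testing with $g_\eps\varphivec$ leaves the remainder $\int_0^T\int_\Omega\Peff(\trhoeps)\,\nablax g_\eps\cdot\varphivec\,\dx\,\dt$, and this cannot be shown to vanish under the hypotheses of the theorem.

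In 3D, take any $g_\eps$ that vanishes on the $\sim\eps^{-3}$ holes and equals $1$ at distance $\sim\eps$ from them. A capacity computation gives $\|\nablax g_\eps\|_{L^r(\Omega)}^r\gtrsim\eps^{(3-r)\alpha-3}$ for $r<3$, and for $r\geq 3$ the norm is unbounded. So you need $\Peff(\trhoeps)$ bounded in $L^{r'}(\OmegaT)$ for some $r'>\frac32$. The Bogovski\u{\i} test only gives $\Peff(\trhoeps)\in L^{\frac{5\gamma-3}{3\gamma}}(\OmegaT)$, with no better spatial integrability. Since $\frac{5\gamma-3}{3\gamma}>\frac32$ only when $\gamma>6$, the argument fails for $3<\gamma\le 6$. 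Even for $\gamma>6$ it needs $\alpha>\frac{3(2\gamma-3)}{\gamma-6}$, which is the more restrictive range reachable with scalar cut-offs.

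In 2D the argument fails for every $\gamma$. Here $\nablax g_\eps\to0$ in $L^r$ forces $r\le 2$, hence $\Peff(\trhoeps)\in L^2$. Only $L^{\frac{(2\gamma-1)^-}{\gamma}}$ is available, and $\frac{2\gamma-1}{\gamma}<2$.

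The same remainder enters your effective viscous flux step ("the remainders coming from the holes vanish as in the previous step"), so that step inherits the problem.

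The missing idea is the solenoidal matrix-valued cut-off $\Phi_\eps$ of Lemma~\ref{lemPhi} and Lemma~\ref{lem:Phi2D}. Because $\div\Phi_\eps=0$, $\div(\Phi_\eps\varphivec)$ contains no derivative of $\Phi_\eps$. The pressure therefore only meets $\mathbb{I}-\Phi_\eps$, which is small in every $L^s$, $s<\infty$, so any bound $\Peff(\trhoeps)\in L^{1+}$ suffices. Derivatives of $\Phi_\eps$ only hit $\trhoeps\tuveceps\otimes\tuveceps$ and $\Svisc(\nablax\tuveceps)$, through $\nablax(\Phi_\eps\varphivec)-\Phi_\eps\nablax\varphivec$. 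This is where the strict restrictions on $\alpha$ are used.

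A second, smaller gap is the energy inequality, which does not pass to the limit by weak lower semicontinuity alone. In \eqref{wkSol energy} the energy is integrated against $-\partial_t\psi$, which changes sign for general $\psi\ge 0$. You therefore need convergence of $\int_0^T\partial_t\psi\int E_\eps$, not just a $\liminf$ bound.

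For the kinetic part, $W$, and $\frac{\coup}{2}|\rho-c|^2$, this follows from the strong convergence of $\trhoeps$ and $\tceps$ and the weak convergence of the convective term. The term $\frac{\kappa}{2}|\nablax\ceps|^2$, however, requires strong $L^2$ convergence of $\nablax\ceps$, which you announce (``see below'') but never prove.

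The paper obtains it by testing the parabolic equation with $\ceps$ on $\Omegaeps$ and with $c$ on $\Omega$. Since $\tceps\to c$ in $C([0,T];L^2(\Omega))$ and $\partial_t\tceps\weak\partial_t c$, $\trhoeps\weak\rho$ in $L^2(\OmegaT)$, the right-hand sides converge. Hence $\|\nablax\ceps\|_{L^2((0,T)\times\Omegaeps)}\to\|\nablax c\|_{L^2(\OmegaT)}$, and together with weak convergence this gives strong convergence.
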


\section{Homogenization in 3D}\label{sec:Hom3D}
To ease the notation, we denote in the sequel by $C>0$ a positive constant that may be different from line to line, but does not depend on $\eps$.
According to the different settings depending on the dimension $\dimension\in \{2,3\}$, we will split the proof of Theorem~\ref{thm:Hom} into two parts.
This section is devoted to the proof of the case $d=3$.
\subsection{Uniform estimates}\label{subsec:unifBds}
Let $(\rhoeps,\uveceps,\ceps)$ be a finite energy weak solution emanating from the initial data $(\rho_{0\eps},(\rho\uvec)_{0\eps},c_{0\eps})$ that satisfies the convergences in \eqref{thm:Hom IC}.
In order to perform the homogenization limit $\eps \to 0$, we have to derive appropriate bounds for the solution $(\rhoeps,\uveceps,\ceps)$ that are uniform with respect to the homogenization parameter $\eps>0$.
The first set of uniform bounds follows immediately from the energy inequality \eqref{wkSol energy}.
\begin{lemma}\label{lem:Hom3d energybds}
    Let the hypotheses and notations of \Cref{thm:Hom} with $d=3$ hold true.
    Then we have
    \begin{equation}\label{Hom3D energybds}
        \begin{aligned}
            &\left\|\sqrt{\rhoeps}\uveceps\right\|_{L^\infty(0,T;L^2(\Omegaeps))}
            +
            \|\rhoeps \|_{L^\infty(0,T;L^\gamma(\Omegaeps))}
            +
            \|\ceps\|_{L^\infty(0,T;W^{1,2}(\Omegaeps))}
            \\
            &\quad
            +
            \|\uveceps\|_{L^2(0,T;W^{1,2}(\Omegaeps))}
            +
            \|\partial_t \ceps\|_{L^2((0,T)\times\Omegaeps)}
            \leq 
            C.
        \end{aligned}
    \end{equation}
\end{lemma}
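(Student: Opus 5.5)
We derive all bounds in \eqref{Hom3D energybds} from the energy inequality \eqref{wkSol energy}. We combine it with a uniform bound on the initial energy and control the non-monotone part of $W$ through \eqref{dec prs reg}--\eqref{dec prs grwth}.

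\emph{Step 1: initial energy.} By \eqref{thm:Hom IC}, $\tilde\rho_{0\eps}$ is bounded in $L^\gamma(\Omega)$ and $|\widetilde{(\rho\uvec)}_{0\eps}|^2/\tilde\rho_{0\eps}$ is bounded in $L^1(\Omega)$. Also $\Eeps c_{0\eps}$ is bounded in $W^{1,2}(\Omega)$, hence $c_{0\eps}$ is bounded in $W^{1,2}(\Omegaeps)$. Since $\gamma>3\geq 2$, the term $\frac{\coup}{2}|\rho_{0\eps}-c_{0\eps}|^2$ is bounded in $L^1$. For the pressure potential, $|W(r)|\le C(1+r^\gamma)$ follows from $p\le C_h+d_hr^\gamma$ (using $q\le 0$, $q$ bounded) together with $r\int_1^r p(z)z^{-2}\,\dd z$. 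The integrand is bounded near $r=0$ because $h(z)/z^2\lesssim ...$. More carefully: for $r\le1$, $|W(r)|\le r\int_r^1 |p|z^{-2}\le r\,\sup_{[0,1]}|p|\,(1/r)\le C$. Hence the right-hand side of \eqref{wkSol energy} is bounded uniformly in $\eps$.

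\emph{Step 2: lower bound on $W$.} We split $W=W_h+W_q$ with $W_h(r)=r\int_1^r h(z)z^{-2}\,\dd z$. The term $W_q$ is bounded below, since $q$ has compact support: for large $r$ we have $W_q(r)=r\cdot\mathrm{const}\ge -Cr$, and for small $r$ it is bounded. The condition $h'\ge c_hr^{\gamma-1}$ gives $h(r)\ge \frac{c_h}{\gamma}r^\gamma$, and then $W_h(r)\ge c r^\gamma - Cr - C$. Absorbing the linear terms via Young's inequality yields $W(r)\ge c r^\gamma - C$. Since $|\Omegaeps|\le|\Omega|$, this gives $\int_{\Omegaeps}W(\rhoeps)\ge c\|\rhoeps\|_{L^\gamma}^\gamma - C$.

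\emph{Step 3: conclusion.} We choose $\psi$ approximating $\mathbf 1_{[0,\tau)}$ and use the weak continuity in time (standard Lebesgue-point argument). This yields, for a.e.\ $\tau$, a bound on $\int\frac12\rhoeps|\uveceps|^2+c\rhoeps^\gamma+\frac\coup2|\rhoeps-\ceps|^2+\frac\kappa2|\nablax\ceps|^2$ plus $\int_0^\tau\!\int\Svisc:\nablax\uveceps+\paracoup|\partial_t\ceps|^2$. This gives the $\sqrt{\rhoeps}\uveceps$, $\rhoeps$, $\nablax\ceps$ and $\partial_t\ceps$ bounds. The $L^2$ bound on $\ceps$ follows from $\|\ceps\|_{L^2}\le\|\ceps-\rhoeps\|_{L^2}+\|\rhoeps\|_{L^2}$, where $\gamma\ge2$ and $\Omegaeps$ is bounded. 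For the velocity, $\uveceps\in W^{1,2}_0(\Omegaeps)$, so its zero extension lies in $W^{1,2}_0(\Omega)$. Korn's inequality on $\Omega$, where the trace-free part controls the symmetric gradient for zero boundary data, together with Poincaré on $\Omega$ with $\eps$-independent constants, gives $\|\uveceps\|_{W^{1,2}}^2\le C\int\Svisc(\nablax\uveceps):\nablax\uveceps$.

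The main obstacle is the bookkeeping in Step 2: $W$ is not positive because $p$ is non-monotone, so we must make sure the negative contribution of $q$ is only linear in $r$ and can be absorbed. Equally, the Korn/Poincaré constants must be $\eps$-independent, which the zero extension to the fixed domain $\Omega$ guarantees.
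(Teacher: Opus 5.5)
Your proof is correct and follows the same route as the paper. You bound the initial energy uniformly via \eqref{thm:Hom IC}, obtain coercivity of $W$ from \eqref{dec prs reg}--\eqref{dec prs grwth}, bound $\|\ceps\|_{L^2}$ by $\|\ceps-\rhoeps\|_{L^2}+\|\rhoeps\|_{L^2}$, and apply Poincar\'e on $\Omega$ through the zero extension; you simply spell out the lower bound $W(r)\ge cr^\gamma-C$ and the Korn-type control of $\nablax\uveceps$ by the trace-free dissipation, which the paper leaves implicit. One small wording slip: $W_q$ is not bounded below in general, only $W_q(r)\ge -Cr$, but that is exactly the bound you use.
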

\begin{proof}
    The energy inequality \eqref{wkSol energy} implies for almost all $\tau \in (0,T)$ that
    \begin{equation*}
    \begin{aligned}
        &\int_{\Omegaeps}
        \frac{1}{2}\rhoeps(\tau,\cdot)|\uveceps(\tau,\cdot)|^2 + W(\rhoeps(\tau,\cdot)) + \frac{\coup}{2}|\rhoeps(\tau,\cdot)-\ceps(\tau,\cdot)|^2 + \frac{\kappa}{2}|\nablax \ceps(\tau,\cdot)|^2 
        \, \dd x 
        \\
        &\quad +
        \int_0^\tau 
        \int_{\Omegaeps} \Svisc(\nablax\uveceps):\nablax\uveceps + \paracoup|\partial_t\ceps|^2 
        \, \dd x \, \dd t
        \\
        &\leq 
        \int_{\Omegaeps}
        \frac{\big|(\rho\uvec)_{0\eps}\big|^2}{2\rho_{0\eps}} + W(\rho_{0\eps})+\frac{\coup}{2}|\rho_{0\eps}-c_{0\eps}|^2 + \frac{\kappa}{2} |\nablax c_{0\eps}|^2 
        \, \dd x 
        \leq C,
    \end{aligned}
    \end{equation*}
    where we have used the assumptions on the initial data in \eqref{thm:Hom IC} to obtain the last inequality.
    From this inequality, we deduce by using \eqref{dec prs reg}--\eqref{dec prs grwth} that
    \begin{equation*}
        \begin{aligned}
            &\left\|\sqrt{\rhoeps}\uveceps\right\|_{L^\infty(0,T;L^2(\Omegaeps))}
            +
            \|\rhoeps\|_{L^\infty(0,T;L^\gamma(\Omegaeps))}
            +
            \|\rhoeps-\ceps\|_{L^\infty(0,T;L^2(\Omegaeps))}
            \\
            &\quad 
            +
            \|\nablax \ceps \|_{L^\infty(0,T;L^2(\Omegaeps))}
            +
            \|\nablax\uveceps\|_{L^2((0,T)\times\Omegaeps)}
            +
            \|\partial_t \ceps\|_{L^2((0,T)\times\Omegaeps)}
            \leq C.
        \end{aligned}
    \end{equation*}
    Since $\gamma>3$, we infer
    \begin{equation*}
        \|\ceps\|_{L^\infty(0,T;L^2(\Omegaeps))}
        \leq 
        \|\rhoeps-\ceps\|_{L^\infty(0,T;L^2(\Omegaeps))}
        +
        \|\rhoeps\|_{L^\infty(0,T;L^2(\Omegaeps))}
        \leq 
        C.
    \end{equation*}
    From Poincar\'e's inequality on $\Omega$, we conclude
    \begin{equation*}
        \|\uveceps\|_{L^2((0,T)\times\Omegaeps)}
        =
        \|\tuveceps\|_{L^2((0,T)\times\Omega)}
        \leq 
        C
        \|\nablax \tuveceps\|_{L^2((0,T)\times\Omega)}
        =
        C\|\nablax \uveceps\|_{L^2((0,T)\times\Omegaeps)}
        \leq C.
    \end{equation*}
\end{proof}
To obtain compactness for the pressure term $\Peff(\rhoeps)$, we need an improved uniform bound for the density $\rhoeps$.
For the compressible NSE, such a uniform bound has been obtained by testing the momentum equation with a specific function, which is defined by means of the Bogovski\u{\i} operator $\Bogeps$ on the porous domain $\Omegaeps$ constructed in \cite{LuSchwarzacher2018,DieningFeireislLu2017}.
For the sake of completeness we state here the properties of this operator as a lemma.
\begin{lemma}\label{lem:Hom3D Bogeps}
For $\eps \in (0,1)$, let $\Omega_\eps$ be defined as in \eqref{def:domain} with $d=3$ and let $s \in (1,\infty)$. Then there exists a bounded linear operator $\mathcal{B}_\eps : L_0^s(\Omega_\eps) \to W_0^{1,s}(\Omega_\eps; \RR^3)$ such that for any $f \in L_0^s(\Omega_\eps)$,
\begin{align*}
    \div \mathcal{B}_\eps(f) = f \text{ a.e.~in } \Omega_\eps,
    \quad
    \|\mathcal{B}_\eps(f)\|_{W_0^{1,s}(\Omega_\eps)} \leq C \Big( 1 + \eps^\frac{(3-s)\alpha - 3}{s} \Big) \|f\|_{L^s(\Omega_\eps)}.
\end{align*}
Moreover, for any $s>\frac32$, this operator can be extended to a linear operator
\begin{align*}
    \mathcal{B}_\eps\colon \big[\dot{W}^{1,s^\prime}(\Omegaeps)\big]^\ast :=  \big\{ f \in \big[W^{1, s'}(\Omega_\eps)\big]^\ast : \langle f, 1\rangle=0 \big\} \to L^s(\Omega_\eps;\RR^3)
\end{align*}
such that, for any $f\in \big[\dot{W}^{1,s^\prime}(\Omegaeps)\big]^\ast$,
\begin{align*}
    -\int_{\Omegaeps}\Bogeps(f)\cdot \nablax \phi\, \dd x
    = \langle f,  \phi\rangle \quad \forall\,\phi\in W^{1,s'}(\Omega_\eps), 
    \quad
    \|\mathcal{B}_\eps (f) \|_{L^s(\Omega_\eps)} \leq C \|f\|_{[W^{1,s^\prime}(\Omega_\eps)]^\ast}.
\end{align*}
\begin{proof}
    See \cite[Theorem~2.3]{DieningFeireislLu2017} and \cite[Proposition~2.3]{LuSchwarzacher2018}.
\end{proof}
\end{lemma}
By interpreting the additional term $\rhoeps\nablax \ceps$ as a force term and using the uniform bounds in Lemma~\ref{lem:Hom3d energybds}, we adapt the technique from \cite{OschmannPokorny2023} to the rNSKE and obtain the following improved bound for the density.
\providecommand{\Boge}{\mathcal{B}_\eps}
\begin{lemma}\label{lem:impr prs Hom3D}
    Let the hypotheses and notations of \Cref{thm:Hom} with $d=3$ hold true.
    Then we have
    \begin{equation}\label{impr prs Hom3D}
        \left\|\rhoeps\right\|_{L^{\frac{5\gamma-3}{3}}((0,T)\times \Omegaeps)}
        +
        \left\|\Peff(\rhoeps)\right\|_{L^{\frac{5\gamma-3}{3\gamma}}((0,T)\times \Omegaeps)} 
        \leq 
        C.
    \end{equation}
\end{lemma}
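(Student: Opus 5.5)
We follow the standard Bogovski\u{\i} argument for improved pressure integrability, with the pressure estimate run on the perforated domain as in \cite{OschmannPokorny2023}. The term $\coup\rhoeps\nablax\ceps$ is treated as an external force, and its bound comes from Lemma~\ref{lem:Hom3d energybds}. Set $\theta := \frac{2\gamma}{3}-1 > 1$ (since $\gamma>3$). We test \eqref{wkSol mom} with
\begin{equation*}
    \varphivec = \psi(t)\,\Bogeps\Big(\rhoeps^\theta - \fint_{\Omegaeps}\rhoeps^\theta\Big), \qquad \psi\in \mathcal{D}((0,T)),\ 0\le\psi\le1.
\end{equation*}
Because the test function is not smooth, we first regularize $\rhoeps$ in time or use the renormalized continuity equation. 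Since $\rhoeps$ is a weak solution extended by zero and $\uveceps\in W^{1,2}_0$, the renormalized equation holds for $\tilde\rho_\eps$ on $\RR^3$ once $\rhoeps\in L^2$, which is true because $\gamma>3$. The aim is
\begin{equation*}
    \int_0^T\!\!\int_{\Omegaeps} \psi\,\Peff(\rhoeps)\rhoeps^\theta \,\dx\,\dt \le C,
\end{equation*}
after which the claim follows. By \eqref{dec prs grwth}, $h(r)\ge \frac{c_h}{\gamma}r^\gamma$, and $q$ is bounded, so $\Peff(r)r^\theta \ge c r^{\gamma+\theta} - C$. Also $\gamma+\theta = \frac{5\gamma-3}{3}$, and $\Peff(r)\lesssim 1+r^{\gamma}$ because $\gamma>2$. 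Together these give both norms in \eqref{impr prs Hom3D}.

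The identity produces these terms: the mean-value term $\fint\rhoeps^\theta\int\Peff(\rhoeps)$, the viscous term, the convective term $\rhoeps\uveceps\otimes\uveceps:\nablax\Bogeps(\cdot)$, the force term $\coup\rhoeps\nablax\ceps\cdot\Bogeps(\cdot)$, and the time-derivative terms. The time-derivative terms are $\partial_t\psi\,\rhoeps\uveceps\cdot\Bogeps(\cdot)$ and $\rhoeps\uveceps\cdot\Bogeps(\partial_t\rhoeps^\theta)$, where $\partial_t\rhoeps^\theta = -\div(\rhoeps^\theta\uveceps)-(\theta-1)\rhoeps^\theta\div\uveceps$.

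\begin{itemize}
\item The mean term is bounded by $L^\infty_tL^\gamma$, since $\theta<\gamma$.
\item For the viscous term we use the $W^{1,2}$ bound of Lemma~\ref{lem:Hom3D Bogeps} with $s=2$. The constant is $1+\eps^{(\alpha-3)/2}\le C$ because $\alpha>3$, and $\rhoeps^\theta\in L^\infty_tL^{\gamma/\theta}$ with $\gamma/\theta\ge2$ exactly when $\gamma\ge3$ fits.
\item The force term is handled with $\rhoeps\nablax\ceps\in L^\infty_t L^{2\gamma/(\gamma+2)}$ and $\Bogeps(\cdot)\in L^\infty_tW^{1,2}\subset L^6$.
\item For the term with $\div(\rhoeps^\theta\uveceps)$ we use the negative-norm part of Lemma~\ref{lem:Hom3D Bogeps}, which has an $\eps$-independent constant. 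It gives $\|\Bogeps(\div(\rhoeps^\theta\uveceps))\|_{L^s}\le C\|\rhoeps^\theta\uveceps\|_{L^s}$, and Hölder in $L^\infty_tL^\gamma$ and $L^2_tL^6$ then bounds it.
\item The remaining term $\Bogeps(\rhoeps^\theta\div\uveceps-\text{mean})$ has only $L^{s}$ data with $s<2$. It must use the first estimate, whose constant $\eps^{((3-s)\alpha-3)/s}$ is bounded only if $\alpha\ge 3/(3-s)$, equivalently $s\le 3-3/\alpha$.
\end{itemize}

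The main obstacle is this last term together with the convective term, and this is where the condition $\alpha>\frac{2\gamma-3}{\gamma-3}$ must enter. We first try to place $\rhoeps^\theta\div\uveceps$ in $L^2_tL^{s}$ with $\frac1s=\frac{\theta}{\gamma}+\frac12$. Then we pair $\Bogeps(\cdot)\in L^2_tW^{1,s}\subset L^2_tL^{s^*}$ against $\rhoeps\uveceps\in L^\infty_tL^{2\gamma/(\gamma+1)}$, interpolating if necessary with the improved integrability being proved (absorbing via Young) as in \cite{OschmannPokorny2023}. We then check that the constraint on $s$ reduces to the stated bound on $\alpha$. If the exponents do not close, we lower $\theta$ to a smaller admissible value. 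For the convective term we use $\rhoeps|\uveceps|^2\in L^1_tL^{3\gamma/(\gamma+3)}$ against $\nablax\Bogeps(\rhoeps^\theta)$, again requiring a $W^{1,s}$ bound with $s>2$ and an $\eps$-loss. This also calls for interpolation with the unknown $\|\rhoeps\|_{L^{\gamma+\theta}}$ and absorption into the left-hand side. Finally, letting $\psi\uparrow 1$ completes the proof.
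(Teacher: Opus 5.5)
Your overall route is the paper's. You use the same $\theta=\frac{2\gamma-3}{3}$ and the same test function $\psi\,\Bogeps\big(\rhoeps^\theta-\fint_{\Omegaeps}\rhoeps^\theta\big)$, treat the single-phase terms as in \cite{OschmannPokorny2023}, and control the coupling term via $\rhoeps\nablax\ceps\in L^\infty(0,T;L^{\frac{2\gamma}{\gamma+2}}(\Omegaeps))$.

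However, the exponent claim behind your viscous and coupling estimates is wrong. $\gamma/\theta=\frac{3\gamma}{2\gamma-3}\geq 2$ holds if and only if $\gamma\leq 6$, not for all $\gamma\geq 3$. For $\gamma>6$ the energy bound only gives $\rhoeps^\theta\in L^\infty(0,T;L^{s}(\Omegaeps))$ with $s<2$. So $\Bogeps(\rhoeps^\theta-\fint_{\Omegaeps}\rhoeps^\theta)\in L^\infty(0,T;W^{1,2}(\Omegaeps))$ is not available, and your treatment of the coupling term fails. This is the one term not covered by the single-phase literature, and the only one the paper actually estimates. The paper instead takes $s=\frac{6\gamma}{5\gamma-6}<2$:
\begin{itemize}
\item $W^{1,s}(\Omegaeps)\hookrightarrow L^{\frac{2\gamma}{\gamma-2}}(\Omegaeps)$;
\item the Bogovski\u{\i} constant is bounded since $\frac{5\gamma-6}{3(\gamma-2)}\leq 3<\alpha$;
\item $\|\rhoeps^\theta\|_{L^s}=\|\rhoeps\|_{L^{s\theta}}^\theta$ with $s\theta=\frac{2\gamma(2\gamma-3)}{5\gamma-6}\leq\gamma$.
\end{itemize}
Hence the energy bound suffices for every $\gamma>3$. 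For the viscous term with $\gamma>6$ you would instead need $\rhoeps^\theta\in L^2((0,T)\times\Omegaeps)$. This follows by interpolation with the unknown $L^{\gamma+\theta}$ norm, since $2\theta<\gamma+\theta$, followed by absorption.

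Your last paragraph is a plan rather than a proof, and its fallback is not admissible. Since $\frac{5\gamma-3}{3}=\gamma+\theta$, the statement forces $\theta=\frac{2\gamma-3}{3}$, so ``lowering $\theta$'' would prove a weaker lemma.

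You also put the condition $\alpha>\frac{2\gamma-3}{\gamma-3}$ in the wrong term. It enters through the convective term, and no interpolation is needed there. One pairs $\rhoeps|\uveceps|^2\in L^1(0,T;L^{\frac{3\gamma}{\gamma+3}})$ with $\nablax\Bogeps(\cdot)\in L^\infty(0,T;L^{\frac{3\gamma}{2\gamma-3}})$; the exponent $\frac{3\gamma}{2\gamma-3}=\gamma/\theta$ is exactly the dual one. For $s=\frac{3\gamma}{2\gamma-3}$, the condition $(3-s)\alpha-3\geq 0$ is equivalent to $\alpha\geq\frac{2\gamma-3}{\gamma-3}$. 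This is also where $\gamma>3$ (i.e.\ $s<3$) is needed.

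In the $\rhoeps^\theta\div\uveceps$ term all exponents are below $2$, where $\frac{3}{3-s}<3<\alpha$, so there is no $\eps$-loss. The real difficulty there is that for $\gamma\geq 6$ your choice $\frac1s=\frac{\theta}{\gamma}+\frac12$ gives $s\leq 1$. Likewise, the $\div(\rhoeps^\theta\uveceps)$ term then leaves the range $s>\frac32$ of \Cref{lem:Hom3D Bogeps}. One must interpolate $\rhoeps$ between $L^\infty(0,T;L^\gamma)$ and the unknown $L^{\gamma+\theta}$ norm. One must also interpolate $\rhoeps\uveceps$ between $L^\infty(0,T;L^{\frac{2\gamma}{\gamma+1}})$ and $L^2(0,T;L^{\frac{6\gamma}{\gamma+6}})$. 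H\"older then closes with no room to spare exactly at $\theta=\frac{2\gamma-3}{3}$, so this cannot be left to ``if necessary''. Either carry it out, or do as the paper does: quote \cite[Section~3.2]{OschmannPokorny2023} for the single-phase terms and prove only the coupling term.
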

\begin{proof}
    Let us first assume that
    \begin{equation}\label{add regularity}
        \rhoeps^{\theta},\,\,\partial_t(\rhoeps^\theta) \in L^\infty(0,T;L^\infty(\Omegaeps)).
    \end{equation}
    We define
    \begin{align*}
        \theta:=\frac{2\gamma-3}{3},
        &&
        \varphivec_{\eps} := \Bogeps\left(\rhoeps^\theta - \fint_{\Omegaeps}\rhoeps^\theta \right)
    \end{align*}
    where $\Bogeps$ denotes the Bogovski\u{\i} operator on the porous domain $\Omegaeps$ (see \Cref{lem:Hom3D Bogeps}).
    Thanks to \eqref{wkSol reg} and \eqref{wkSol cont} we have
    \begin{equation*}
        \partial_t(\trhoeps^\theta) + \div(\trhoeps^\theta\tuveceps) = (1-\theta) \trhoeps^\theta \div\tuveceps
        \quad \text{in } \mathcal{D}^\prime(\RR_T^3),
    \end{equation*}
    see e.g.~\cite[Lemma~11.13]{FeireislNovotny2017singlim}.
    Using the regularity \eqref{add regularity} this implies $\div(\rhoeps^\theta\uveceps)\in L^2((0,T)\times\Omegaeps)$ and thus, $\rhoeps^\theta\uveceps$ has a well-defined normal trace on $\partial\Omegaeps$.
    In particular, we have that the latter identity holds almost everywhere on $\Omegaeps$, which implies
    \begin{equation}\label{norm trace}
        \rhoeps^\theta\uveceps(\tau)\cdot\mathbf{n}_{\partial\Omegaeps} = 0
        \quad \text{in the sense of traces for almost all } \tau \in (0,T),
    \end{equation}
    and
    \begin{equation}\label{deriv test fct Bogeps}
        \partial_t\varphivec_\eps 
        = 
        -\Bogeps\big(\div(\rhoeps^\theta\uveceps)\big) 
        +
        (1-\theta) \Bogeps\big(\rhoeps^\theta \div\uveceps - \fint_{\Omegaeps}\rhoeps^\theta \div\uveceps\big)
        \quad \text{a.e.~in } (0,T)\times\Omegaeps.
    \end{equation}
    We fix $\psi \in C^\infty_c((0,T))$ with $\psi \geq 0$.
    Using a density argument we can use $\psi\varphivec_\eps$ as a test function in the momentum equation and obtain
    \begin{equation*}
        \begin{aligned}
            & \int_0^T \psi \int_{\Omega_\eps} 
            \Peff(\rhoeps) \rho_\eps^\theta 
            \,\dx\,\dt 
            = 
            \int_0^T \psi 
            \bigg( \int_{\Omega_\eps} \Peff(\rhoeps) \rho_\eps^\theta \dx \bigg)
            \bigg( \fint_{\Omega_\eps} \rho_\eps^{\theta} \bigg) 
            \dt   
            -\int_0^T \int_{\Omega_\eps} 
            \partial_t \psi \,\rho_\eps \uvec_\eps \cdot \varphivec_\eps 
            \,\dx \,\dt 
            \\
            &+  
            \int_0^T 
            \int_{\Omega_\eps} \psi \Svisc(\nablax \uvec_\eps):\nablax \varphivec_\eps 
            \, \dx\,  \dt 
            - 
            \int_0^T \int_{\Omega_\eps} 
            \psi \rho_\eps \uvec_\eps\otimes \uvec_\eps : \nablax \varphivec_\eps
            \dx \dt
            -
            \int_0^T \int_{\Omega_\eps} \psi \rho_\eps \uvec_\eps \cdot \partial_t \varphivec_\eps 
            \dx \dt 
            \\
            &- 
            \int_0^T \int_{\Omega_\eps} 
            \coup \psi \rho_\eps \nablax c_\eps \cdot \varphivec_\eps \dx \dt 
            =: \sum\limits_{i=1}^{6}I_i^\eps.
        \end{aligned}
    \end{equation*}
    Using \eqref{thm:Hom3D restr gamma,alpha}, \eqref{norm trace}, \eqref{deriv test fct Bogeps}, \Cref{lem:Hom3d energybds}, and \Cref{lem:Hom3D Bogeps}, we estimate the terms $I_i^\eps$ for $i \in \{1,2,\dots,5\}$ as
    \begin{equation*}
        \sum\limits_{i=1}^5 \left|I_i^\eps\right|
        \leq 
        C \left(1+ \| \psi\|_{L^\infty((0,T))} + \|\partial_t\psi\|_{L^1((0,T))}\right)
        + 
        \frac{3}{4} \int_0^T \psi \int_{\Omegaeps} 
        \rhoeps^{\gamma+\theta}
        \, \dx \, \dt.
    \end{equation*}
    Since these estimates where performed in \cite[Section~3.2]{OschmannPokorny2023}, we omit the details here and refer the reader to the mentioned reference for more details.
    For the last term $I_6^\eps$, we use  \Cref{lem:Hom3d energybds}, \Cref{lem:Hom3D Bogeps}, and the Sobolev embedding $W^{1,{\frac{6\gamma}{5\gamma - 6}}}(\Omega)\hookrightarrow L^{\frac{2\gamma}{\gamma-2}}(\Omega)$ to obtain
    \begin{equation*}
    \begin{aligned}
        |I_6^\eps|
        &\leq 
        C\|\psi\|_{L^\infty((0,T))}\|\rhoeps\nablax\ceps\|_{L^\infty(0,T;L^{\frac{2\gamma}{\gamma+2}}(\Omegaeps))} \|\varphivec_\eps\|_{L^1(0,T;L^{\frac{2\gamma}{\gamma-2}}(\Omegaeps))}
        \\
        &\leq 
        C\|\psi\|_{L^\infty((0,T))} \|\rhoeps\|^\theta_{L^1(0,T;L^{\frac{6\gamma\theta}{5\gamma-6}}(\Omegaeps))}
        \leq C\|\psi\|_{L^\infty((0,T))}.
    \end{aligned}
    \end{equation*}
    In the second inequality, we have used the fact that the inequality
    \begin{equation*}
        \left(3 - \frac{6\gamma}{5\gamma-6}\right) \alpha - 3 \geq 0 
        \Longleftrightarrow \alpha \geq \frac{5\gamma-6}{3(\gamma-2)}
    \end{equation*}
    holds due to assumption \eqref{thm:Hom3D restr gamma,alpha}, since
    \begin{equation*}
        \frac{2\gamma-3}{\gamma-3} \geq \frac{5\gamma-6}{3(\gamma-2)} \qquad \forall\, \gamma \in (3,\infty).
    \end{equation*}
    In the third inequality, we have used \Cref{lem:Hom3d energybds} and the fact that
    \begin{equation*}
        \frac{6\gamma \theta}{5\gamma -6} 
        =
        \frac{\gamma(4\gamma-6)}{5\gamma-6}
        \leq \gamma 
        \qquad \forall \, \gamma \in \left(\frac{6}{5},\infty\right).
    \end{equation*}
    On the other hand, since $p$ satisfies \eqref{dec prs reg}--\eqref{dec prs grwth}, we have that
    \begin{equation*}
        \int_0^T \psi \int_\Omega 
        \rhoeps^{\gamma + \theta} 
        \, \dx \, \dt 
        \leq 
        C + C \int_0^T \psi \int_\Omega 
        \Peff(\rhoeps) \rhoeps^\theta 
        \, \dx \, \dt .
    \end{equation*}
    In total, we have shown for any $\psi \in C^\infty_c((0,T))$ satisfying $\psi\geq 0$ that
    \begin{equation*}
        \int_0^T \psi\int_\Omega \rhoeps^{\gamma+\theta} \, \dx \, \dt 
        \leq 
        C\left(1+ \|\psi\|_{L^\infty((0,T))} + \|\partial_t\psi\|_{L^1((0,T))}\right),
    \end{equation*}
    where the positive constant $C$ does not depend on $\psi$.
    Approximating the identity function on $(0,T)$ with test functions from $\mathcal{D}((0,T))$ yields the estimate for the first term in \eqref{impr prs Hom3D}.
    The estimate for the second term in \eqref{impr prs Hom3D} follows from the fact that $p$ satisfies \eqref{dec prs reg}--\eqref{dec prs grwth}.
    To get rid of the assumption \eqref{add regularity} we use the regularization from \cite[Section~7.9.5]{NovotnyStraskraba2004}.
\end{proof}
\subsection{Extensions of functions}\label{subsec:extsfunct3D}
To perform the homogenization limit $\eps \to 0$, we first extend the weak solution $(\rhoeps,\uveceps,\ceps)$ to the $\eps$-independent domain $\OmegaT$ by means of suitable extensions.
More specifically, as $\rhoeps \in C_{\mathrm{w}}(0,T;L^\gamma(\Omegaeps))$ and $\uveceps \in L^2(0,T;W^{1,2}_0(\Omegaeps))$, we use for both quantities the zero extension preserving the corresponding regularities $\trhoeps \in C_{\mathrm{w}}(0,T;L^\gamma(\Omega))$ and $\tuveceps\in L^2(0,T;W^{1,2}_0(\Omega))$.
For the relaxation parameter $\ceps$, we cannot choose this extension to obtain that the same regularity in $L^\infty(0,T;W^{1,2}(\Omega))$, which is due to the boundary conditions $\nablax \ceps \cdot \mathbf{n} = 0$ on $\partial \Omegaeps$\footnote{This is similar to extending the temperature in Navier-Stokes-Fourier equations, see e.g. \cite{LuPokorny2021, Oschmann2022, PokornySkrisovsky2021a}}.
Therefore, we choose the extension operator $\Eeps$ from Lemma~\ref{lem:extEeps}.
With this choice, we have that the corresponding extension indeed satisfies $\Eeps\ceps\in L^\infty(0,T;W^{1,2}(\Omega))$.
From Lemma~\ref{lem:Hom3d energybds} and Lemma~\ref{lem:impr prs Hom3D}, we further immediately conclude the following uniform bounds for the extended quantities $\trhoeps,\tuveceps,\tceps,\Eeps\ceps$.
\begin{lemma}\label{lem:Hom3d bds exts}
    Let the hypotheses and notations of \Cref{thm:Hom} with $d=3$ hold true.
    Then we have 
    \begin{equation}\label{Hom3d bds exts I}
        \begin{aligned}
            &\|\sqrt{\trhoeps}\tuveceps\|_{L^\infty(0,T;L^{2}(\Omega))}
            +
            \|\trhoeps\|_{L^\infty(0,T;L^\gamma(\Omega))}
            +
            \|\Eeps\ceps\|_{L^\infty(0,T;W^{1,2}(\Omega))}
            +
            \|\tceps\|_{L^\infty(0,T;L^2(\Omega))}
            \\
            &\quad +
            \|\tuveceps\|_{L^2(0,T;W^{1,2}(\Omega))}
            +
            \|\partial_t \Eeps\ceps\|_{L^2(\OmegaT)}
            +
            \|\partial_t\tceps\|_{L^2(\OmegaT)}
            +
            \|\trhoeps\|_{L^{\frac{5\gamma-3}{3}}(\OmegaT)}
            \\
            &\quad 
            +
            \|\Peff(\trhoeps)\|_{L^{\frac{5\gamma-3}{3\gamma}}(\OmegaT)}
            \leq 
            C.
        \end{aligned}
    \end{equation}
    Moreover, we have
    \begin{equation}\label{Hom3d bds exts II}
    \begin{aligned}
        &\|\Eeps\ceps-\tilde{c}_\eps\|_{L^\infty(0,T;L^2(\Omega))}
        =
        \|\Eeps\ceps\|_{L^\infty(0,T;L^{2}(\Omega\setminus\Omegaeps))}
        \leq 
        C\eps^{\alpha-1},
        \\
        &\|\nablax \Eeps\ceps\|_{L^\infty(0,T;L^1(\Omega\setminus\Omegaeps))}
        \leq 
        C \eps^{\frac{3(\alpha-1)}{2}}.
    \end{aligned}
    \end{equation}
\end{lemma}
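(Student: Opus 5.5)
The plan is to read off the bounds in \eqref{Hom3d bds exts I} from \Cref{lem:Hom3d energybds}, \Cref{lem:impr prs Hom3D} and the properties of the extension operator in \Cref{lem:extEeps}. The bounds in \eqref{Hom3d bds exts II} will then follow from H\"older's inequality, Sobolev embedding, and the small measure of the holes in \eqref{porous dom number density}.

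\emph{Zero-extension terms.} Since $\trhoeps$, $\tuveceps$ and $\tceps$ vanish on $\Omega\setminus\Omegaeps$, every Lebesgue norm over $\Omega$ equals the corresponding norm over $\Omegaeps$. This also holds for $\sqrt{\trhoeps}\tuveceps$ and for $\Peff(\trhoeps)$, because $\Peff(0)=0$. For the velocity, $\uveceps\in W^{1,2}_0(\Omegaeps)$ gives $\nablax\tuveceps=\widetilde{\nablax\uveceps}$, so the $W^{1,2}(\Omega)$-norm is preserved. For $\partial_t\tceps$, the zero extension commutes with the time derivative in $\mathcal{D}'(\OmegaT)$, since it acts only in space and is linear. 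Hence the corresponding terms in \eqref{Hom3d bds exts I} are controlled directly by \eqref{Hom3D energybds} and \eqref{impr prs Hom3D}.

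\emph{Terms involving $\Eeps$.} Applying \Cref{lem:extEeps} at a.e.\ fixed time with $s=2$ gives
\[
\|\Eeps\ceps(t)\|_{W^{1,2}(\Omega)}\le C\|\ceps(t)\|_{W^{1,2}(\Omegaeps)},
\]
which is uniformly bounded by \Cref{lem:Hom3d energybds}. The time derivative is the step that needs some care, since $\partial_t\ceps$ is only known to lie in $L^2$. The estimate $\|\Eeps\phi\|_{L^2(\Omega)}\le C\|\phi\|_{L^2(\Omegaeps)}$ holds on the dense subspace $W^{1,2}(\Omegaeps)$, so $\Eeps$ extends uniquely to a bounded linear operator on $L^2$ with the same constant. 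Because $\Eeps$ is linear and independent of time, it commutes with difference quotients in $t$. Passing to the limit in $L^2(\OmegaT)$ gives $\partial_t\Eeps\ceps=\Eeps\partial_t\ceps$ and $\|\partial_t\Eeps\ceps\|_{L^2(\OmegaT)}\le C\|\partial_t\ceps\|_{L^2}$.

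\emph{The estimates \eqref{Hom3d bds exts II}.} The identity $\Eeps\ceps-\tceps=\Eeps\ceps\,\mathbf 1_{\Omega\setminus\Omegaeps}$ holds because $\Eeps\ceps=\ceps$ on $\Omegaeps$. By \eqref{porous dom number density}, $|\Omega\setminus\Omegaeps|\le C(a_\eps/\eps)^3=C\eps^{3(\alpha-1)}$. H\"older's inequality with exponents $3$ and $3/2$ and the embedding $W^{1,2}(\Omega)\hookrightarrow L^6(\Omega)$ give, a.e.\ in $t$,
\[
\|\Eeps\ceps\|_{L^2(\Omega\setminus\Omegaeps)}\le |\Omega\setminus\Omegaeps|^{1/3}\|\Eeps\ceps\|_{L^6(\Omega)}\le C\eps^{\alpha-1}\|\Eeps\ceps\|_{W^{1,2}(\Omega)}\le C\eps^{\alpha-1}.
\]
Similarly, by Cauchy--Schwarz,
\[
\|\nablax\Eeps\ceps\|_{L^1(\Omega\setminus\Omegaeps)}\le|\Omega\setminus\Omegaeps|^{1/2}\|\nablax\Eeps\ceps\|_{L^2(\Omega)}\le C\eps^{\frac{3(\alpha-1)}{2}}.
\]
Taking the supremum in time and using the uniform $L^\infty(0,T;W^{1,2}(\Omega))$ bound established above concludes the proof.
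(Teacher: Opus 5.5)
Your proposal is correct and takes the same route as the paper. The bounds in \eqref{Hom3d bds exts I} come from \Cref{lem:Hom3d energybds}, \Cref{lem:impr prs Hom3D} and \Cref{lem:extEeps}, and \eqref{Hom3d bds exts II} comes from H\"older's inequality, the embedding $W^{1,2}(\Omega)\hookrightarrow L^6(\Omega)$ and $|\Omega\setminus\Omegaeps|\le C\eps^{3(\alpha-1)}$, exactly as you argue; your density argument for $\partial_t\Eeps\ceps=\Eeps\partial_t\ceps$ usefully justifies a step the paper leaves implicit.
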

\begin{proof}
    The bounds from \eqref{Hom3d bds exts I} follow from Lemma~\ref{lem:Hom3d energybds} and Lemma~\ref{lem:impr prs Hom3D} by using the definition of the zero extension and the properties of the extension operator $\Eeps$ in Lemma~\ref{lem:extEeps}.
    In view of \eqref{porous dom number density}, we infer by using again \Cref{lem:extEeps}, the bounds in \eqref{Hom3d bds exts I}, H\"older's inequality, and the Sobolev embedding $W^{1,2}(\Omega) \hookrightarrow L^{6}(\Omega)$ that
    \begin{equation*}
    \begin{aligned}
        &\|\Eeps\ceps\|_{L^\infty(0,T;L^2(\Omega\setminus\Omegaeps))}
        \leq 
        \|\Eeps\ceps\|_{L^\infty(0,T;L^6(\Omega))}
        \left|\Omega\setminus\Omegaeps\right|^{\frac{1}{3}}
        \leq 
        C\eps^{\alpha-1},
        \\
        &\|\nablax\Eeps\ceps\|_{L^\infty(0,T;L^1(\Omega\setminus\Omegaeps))}
        \leq 
        \|\nablax\Eeps\ceps\|_{L^\infty(0,T;L^2(\Omega))}|\Omega\setminus\Omegaeps|^{\frac{1}{2}}
        \leq 
        C\eps^{\frac{3(\alpha-1)}{2}}.
    \end{aligned}
    \end{equation*}
\end{proof}
For the extended quantities $\trhoeps,\tuveceps,\tilde{c}_\eps,\Eeps\ceps$ we immediately verify the following versions of the continuity equation and the parabolic equation in the unperforated domain $\OmegaT$.
\begin{lemma}\label{lem:Hom3d cont+parab exts}
    Let the hypotheses and notations of \Cref{thm:Hom} with $d=3$ hold true. Then we have for any $\varphi\in \mathcal{D}(\RR^3_T)$ that
    \begin{align}
            &\int_0^T \int_\Omega
            \trhoeps \partial_t\varphi + \trhoeps\tuveceps \cdot \nablax \varphi 
            \, \dd x \, \dd t = 0,\label{Hom3d cont exts}
            \\
            &\int_0^T \int_\Omega 
            \paracoup\tilde{c}_\eps \partial_t \varphi - \kappa \nablax \Eeps\ceps \cdot \nablax \varphi 
            -
            \coup (\tilde{c}_{\eps} - \trhoeps)\varphi 
            \, \dd x \, \dd t
            = \langle \mathcal{F}_{\eps},  \varphi\rangle\label{Hom3d parab exts},
    \end{align}
    where
    \begin{equation*}
        \langle\mathcal{F}_\eps,\varphi\rangle
        :=
        -
        \int_0^T \int_{\Omega\setminus\Omegaeps} 
        \kappa \nablax \Eeps\ceps \cdot \nablax \varphi 
        \, \dd x \, \dd t
        \xrightarrow{\eps\to 0}0.
    \end{equation*}
\end{lemma}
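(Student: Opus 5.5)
The plan is to obtain both identities from the weak formulations \eqref{wkSol cont} and \eqref{wkSol parab} on $\Omegaeps$, using that these already allow test functions $\varphi \in \mathcal{D}(\RR^3_T)$, so no cut-off near the holes is needed. The only work is to rewrite the integrals over $\Omegaeps$ as integrals over $\Omega$ with the extended quantities, and then to show that the error term vanishes.

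For \eqref{Hom3d cont exts}, fix $\varphi\in\mathcal{D}(\RR^3_T)$ and insert its restriction into \eqref{wkSol cont}. The zero extensions $\trhoeps$ and $\tuveceps$ vanish on $\Omega\setminus\Omegaeps$, so $\int_{\Omegaeps}$ may be replaced by $\int_\Omega$ with $\rhoeps,\uveceps$ replaced by $\trhoeps,\tuveceps$. This gives \eqref{Hom3d cont exts} directly.

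For \eqref{Hom3d parab exts}, again insert $\varphi$ into \eqref{wkSol parab}. The terms $\paracoup\ceps\partial_t\varphi$ and $\coup(\ceps-\rhoeps)\varphi$ become integrals over $\Omega$ of $\paracoup\tceps\partial_t\varphi$ and $\coup(\tceps-\trhoeps)\varphi$, by the definition of the zero extension. The gradient term cannot be treated with $\tceps$, since $\tceps$ has a jump across the holes' boundaries. Instead, use $\Eeps\ceps=\ceps$ in $\Omegaeps$ from \Cref{lem:extEeps} to write
\[
\int_{\Omegaeps}\kappa\nablax\ceps\cdot\nablax\varphi\,\dd x=\int_\Omega\kappa\nablax\Eeps\ceps\cdot\nablax\varphi\,\dd x-\int_{\Omega\setminus\Omegaeps}\kappa\nablax\Eeps\ceps\cdot\nablax\varphi\,\dd x .
\]
Since $\ceps(t)\in W^{1,2}(\Omegaeps)$ for every $t$, the extension is well defined for a.e.\ $t$, and the time integration is justified by the bounds in \Cref{lem:Hom3d bds exts}. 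Substituting this identity produces exactly the left-hand side of \eqref{Hom3d parab exts}, with the remainder $\langle\mathcal{F}_\eps,\varphi\rangle$ as defined in the statement.

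It remains to show that $\mathcal{F}_\eps$ vanishes. By the second estimate in \eqref{Hom3d bds exts II},
\[
|\langle\mathcal{F}_\eps,\varphi\rangle|\le \kappa\|\nablax\varphi\|_{L^\infty}\,T\,\|\nablax\Eeps\ceps\|_{L^\infty(0,T;L^1(\Omega\setminus\Omegaeps))}\le C\eps^{\frac{3(\alpha-1)}{2}}\to0,
\]
because $\alpha>3>1$. The step needing most care is not this estimate but the bookkeeping for the gradient term: one must check that it is legitimate to use the extension $\Eeps\ceps$ there while keeping the zero extension $\tceps$ in the other terms.
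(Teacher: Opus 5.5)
Your proposal is correct and follows the paper's argument. Both identities come from inserting $\varphi$ into \eqref{wkSol cont} and \eqref{wkSol parab}: you pass to $\Omega$ through the zero extensions, and for the gradient term you use $\Eeps\ceps=\ceps$ in $\Omegaeps$, which yields exactly $\langle\mathcal{F}_\eps,\varphi\rangle$ with the right sign. The remainder is then bounded by $C\|\varphi\|_{W^{1,\infty}}\eps^{\frac{3(\alpha-1)}{2}}$ via the second estimate in \eqref{Hom3d bds exts II}, exactly as in the paper.
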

\begin{proof}
    Equations \eqref{Hom3d cont exts} and \eqref{Hom3d parab exts} follow from the fact that the weak solution $(\rhoeps,\uveceps,\ceps)$ satisfies the weak formulations of the continuity equation \eqref{wkSol cont} and the parabolic equation \eqref{wkSol parab} on the perforated domain $(0,T)\times \Omegaeps$ by using the definition of the zero extension and the properties of the extension operator $\Eeps$ in Lemma~\ref{lem:extEeps}.
    With \eqref{Hom3d bds exts II} we estimate
    \begin{equation*}
        |\langle\mathcal{F}_\eps,\varphi\rangle|
        \leq 
        C\|\varphi\|_{W^{1,\infty}(\OmegaT)} \|\nablax \Eeps\ceps\|_{L^\infty(0,T;L^1(\Omega\setminus\Omegaeps))}
        \leq 
        C\|\varphi\|_{W^{1,\infty}(\OmegaT)}\eps^{\frac{3(\alpha-1)}{2}}\xrightarrow{\eps \to 0} 0.
    \end{equation*}
\end{proof}
To derive a corresponding momentum equation on the unperforated domain $\OmegaT$, we wish to use a function $\varphivec \in \mathcal{D}(\OmegaT;\RR^3)$ as a test function in the weak formulation of the momentum equation \eqref{wkSol mom}.
However, this is in general not possible due to $\varphivec\neq 0$ on $(0,T)\times(\Omega \setminus\Omegaeps)$.
To circumvent this issue, we use the following lemma introducing a well-behaved cut-off (see \cite{Bravin2024,OschmannPokorny2023}).
\begin{lemma}\label{lemPhi}
Assume that $\eps>0$ and $\alpha>3$ satisfy $2\eps^{\alpha}<\eps^{3}$ and let $\Omegaeps$ be defined as in \eqref{def:domain}.
Then there exists a solenoidal matrix-valued function $\Phi_\eps$ such that
\begin{align*}
\Phi_\eps &\in W^{1,\infty^-}(\Omega; \RR^{3\times 3})\cap L^\infty(\Omega; \RR^{3\times 3}),
\quad %\\,% \quad \forall \, s \in [1,\infty),\\
\Phi_\eps =0 \text{ on } \Omega\setminus \Omega_\eps,
\quad 
\Phi_\eps =\mathbb{I} \text{ on } \Omega\setminus \bigcup_{i \in K_\eps} B_{\eps^{3}}(\eps x_i).
\end{align*}
Moreover, $\|\Phi_\eps\|_{L^\infty(\Omega)}\leq C$, and for any $1\leq s< \infty$,
\begin{align*}
\|\Phi_\eps-\mathbb{I}\|_{L^s(\Omega)}^s &\leq C \eps^{6}, &&\|\nabla\Phi_\eps\|_{L^s(\Omega)}^s \leq C \eps^{(3-s)\alpha - 3} \begin{cases}
|\eps^{(3-2s)(3-\alpha)} - 1| & \text{if } s \neq \frac32,\\
|\log(\eps^{3-\alpha})| & \text{if } s=\frac32.
\end{cases}
\end{align*}
In turn, for any $\varphivec \in C_c^\infty(\Omega;\RR^3)$ and any $s>\frac32$,
\begin{align*}
\|\nabla (\Phi_\eps \varphivec)-\Phi_\eps \nabla\varphivec\|_{L^s(\Omega)}^s &\leq C \eps^{(3-s)\alpha - 3} \|\varphivec\|_{L^\infty(\Omega)}^s.
\end{align*}
\end{lemma}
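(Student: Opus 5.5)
The plan is to build $\Phi_\eps$ hole by hole. Around each hole we place a divergence-free matrix cut-off, supported in an annulus of inner radius $a_\eps=\eps^\alpha$ and outer radius $\eps^3$. The condition $2\eps^\alpha<\eps^3$ guarantees that the annuli $B_{\eps^3}(\eps x_i)\setminus B_{a_\eps}(\eps x_i)$ are nondegenerate. Since $\eps^3\ll\eps$, they are also pairwise disjoint and contained in $\Omega$.

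\emph{Construction.} Fix a radial cut-off $\chi_\eps\in C^\infty$ with the following properties:
\begin{itemize}
\item $\chi_\eps=0$ on $B_{a_\eps}$ and $\chi_\eps=1$ outside $B_{\eps^3}$;
\item $|\nabla\chi_\eps|\le C/|x|$ on the transition region.
\end{itemize}
For instance, take a logarithmic profile in $|x|$ between $a_\eps$ and $\eps^3$, or the standard cut-off on $B_{2a_\eps}\setminus B_{a_\eps}$, in which case $|\nabla \chi_\eps|\le C/a_\eps$. The rows of $\chi_\eps\mathbb I$ are not divergence-free, so we correct each row. For the $j$-th unit vector $e_j$ we write $e_j=\mathrm{curl}(\tfrac12 e_j\times x)$. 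Set
\[
\Phi^{(j)}_\eps:=\mathrm{curl}\big(\chi_\eps(x-\eps x_i)\,\tfrac12 e_j\times(x-\eps x_i)\big)
\]
near each hole, and $e_j$ elsewhere. This is automatically divergence-free. It equals $e_j$ where $\chi_\eps=1$ and vanishes where $\chi_\eps=0$, in particular on $\Omega\setminus\Omegaeps$. The correction is $\nabla\chi_\eps\times\tfrac12 e_j\times(x-\eps x_i)$, which is bounded because $|\nabla\chi_\eps|\,|x|\le C$. This gives $\|\Phi_\eps\|_{L^\infty}\le C$.

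\emph{Estimates.} We have $\Phi_\eps-\mathbb I$ bounded and supported in $\bigcup_i B_{\eps^3}(\eps x_i)$, whose measure is at most $C\eps^{-3}\eps^9=C\eps^6$. Hence $\|\Phi_\eps-\mathbb I\|_{L^s}^s\le C\eps^6$.

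For the gradient we have $|\nabla\Phi_\eps|\le C(|\nabla\chi_\eps|+|x||\nabla^2\chi_\eps|)$. With the choice $\chi_\eps(x)=\chi(|x|/a_\eps)$ supported in $B_{2a_\eps}$, this gives $|\nabla\Phi_\eps|\le C a_\eps^{-1}$ on a set of measure $\le C\eps^{-3}a_\eps^3$. That yields only $\eps^{(3-s)\alpha-3}$, without the refined factor.

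To reproduce the stated bound we instead use a profile spread over $[a_\eps,\eps^3]$ with $|\nabla^k\chi_\eps|\lesssim |x|^{-k}$. This gives $|\nabla\Phi_\eps|\lesssim|x|^{-1}$ on the annulus, and per hole
\[
\int_{a_\eps}^{\eps^3} r^{-s}r^2\,dr\sim
\begin{cases}
\dfrac{|\eps^{3(3-s)}-a_\eps^{3-s}|}{|3-s|} & \text{if } s\ne 3,\\[1ex]
\log(\eps^{3}/a_\eps) & \text{if } s=3.
\end{cases}
\]
Multiplying by $\eps^{-3}$ holes and factoring out $a_\eps^{3-s}=\eps^{(3-s)\alpha}$ gives the claimed form. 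Matching exactly the exponents in the statement (the critical case is $s=\tfrac32$, with $\eps^{(3-2s)(3-\alpha)}$) is a bookkeeping issue. I expect it comes from combining the $|x|^{-1}$ bound with a zero-order part, or from the particular profile in \cite{Bravin2024}. I would track it carefully, but it is not conceptually hard.

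\emph{Final estimate.} We have $\nabla(\Phi_\eps\varphivec)-\Phi_\eps\nabla\varphivec=(\nabla\Phi_\eps)\varphivec$, so its $L^s$ norm is at most $\|\varphivec\|_{L^\infty}\|\nabla\Phi_\eps\|_{L^s}$. For $s>\tfrac32$ the bracket in the gradient bound is bounded: when $3-\alpha<0$ and $3-2s<0$, the exponent $(3-2s)(3-\alpha)$ is positive. So the previous bound gives $C\eps^{(3-s)\alpha-3}$.

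The main obstacle is choosing the profile $\chi_\eps$ so that two things hold at once: the $L^\infty$ bound on $\Phi_\eps$, which needs $|x||\nabla\chi_\eps|\le C$, and the precise gradient rate including the logarithmic case. My first attempt will be the logarithmic-in-radius interpolation between $a_\eps$ and $\eps^3$.
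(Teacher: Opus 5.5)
Your curl construction is a sound, explicit alternative to what the paper does; the paper simply invokes \cite[Lemma~6.1]{OschmannPokorny2023} with $\delta=2$. The field $\mathrm{curl}\big(\chi_\eps\,\tfrac12 e_j\times(x-\eps x_i)\big)$ is divergence-free, equals $e_j$ where $\chi_\eps=1$, vanishes on the holes, and is bounded as long as $|x|\,|\nabla\chi_\eps|\le C$. Your bounds for $\|\Phi_\eps\|_{L^\infty}$ and $\|\Phi_\eps-\mathbb I\|_{L^s}^s$ are correct, as is the reduction of the last estimate to $\|\varphivec\|_{L^\infty}\|\nabla\Phi_\eps\|_{L^s}$.

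\textbf{The gap is the gradient estimate, and it is not bookkeeping.} The profile you commit to is logarithmic in $|x|$ with $|\nabla^k\chi_\eps|\lesssim |x|^{-k}$. It gives $|\nabla\Phi_\eps|\sim\big(|x|\,|\log(\eps^{3-\alpha})|\big)^{-1}$ on the whole annulus, so per hole you integrate $r^{2-s}$, whose critical exponent is $s=3$, not $s=\frac32$. Factoring out $a_\eps^{3-s}$ therefore produces the bracket $|\eps^{(3-s)(3-\alpha)}-1|$, not $|\eps^{(3-2s)(3-\alpha)}-1|$. For $s<3$ this bracket is huge, and the total is of order $\eps^{6-3s}|\log(\eps^{3-\alpha})|^{-s}$.
\begin{itemize}
\item For $s=2$ this decays only logarithmically, instead of like $\eps^{\alpha-3}$.
\item For $2<s<3$ it blows up, whereas the lemma asserts decay $\eps^{(3-s)\alpha-3}$.
\end{itemize}
This is exactly the range that matters. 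In the proof of \Cref{lem:Hom3d mom exts}, the last estimate is used with $s=2$ and with $s=\frac{3\gamma}{2\gamma-3}\in(\frac32,3)$ to obtain $\delta>0$. So the route you announce as your first attempt does not prove the statement.

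\textbf{How to fix it.} The stated form is exactly what $|\nabla\Phi_\eps|\lesssim a_\eps|x|^{-2}$ on $a_\eps<|x|<\eps^3$ yields. Per hole one gets $a_\eps^s\int_{a_\eps}^{\eps^3}r^{2-2s}\,\dd r=\frac{a_\eps^{3-s}}{|3-2s|}\big|(\eps^{3}/a_\eps)^{3-2s}-1\big|$, and $a_\eps^{3/2}\log(\eps^3/a_\eps)$ for $s=\frac32$; then multiply by $C\eps^{-3}$ holes.

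To obtain this, take a capacity-type profile $\chi_\eps=\eta(u)$ with $u(r)=\frac{1/a_\eps-1/r}{1/a_\eps-\eps^{-3}}$, where $\eta$ is smooth, vanishes near $0$ and equals $1$ near $1$. Since $2\eps^\alpha<\eps^3$ gives $1/a_\eps-\eps^{-3}>\frac{1}{2a_\eps}$, you get $|\nabla^k\chi_\eps|\lesssim a_\eps|x|^{-1-k}$ for $k=1,2$. Hence $|x|\,|\nabla\chi_\eps|\lesssim 1$ and $|\nabla\Phi_\eps|\lesssim a_\eps|x|^{-2}$. The composition with $\eta$ is needed anyway: your $\Phi_\eps$ contains $\nabla\chi_\eps$, which must not jump at $|x|=a_\eps$ or $|x|=\eps^3$, or else $\Phi_\eps\notin W^{1,s}$.

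Even simpler, the cut-off on $B_{2a_\eps}\setminus B_{a_\eps}$ that you discarded already suffices. It gives $\|\nabla\Phi_\eps\|_{L^s}^s\le C\eps^{(3-s)\alpha-3}$ and $\|\Phi_\eps-\mathbb I\|_{L^s}^s\le C\eps^{3\alpha-3}\le C\eps^6$. The bracket in the statement is not a refinement; it is bounded from below, because $\eps^{\alpha-3}<\frac12$:
\begin{itemize}
\item it is at least $2^{3-2s}-1$ for $s<\frac32$;
\item it is at least $1-2^{3-2s}$ for $s>\frac32$;
\item it is at least $\log 2$ for $s=\frac32$.
\end{itemize}
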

\begin{proof}
    The statement is precisely the case $\delta=2$ in \cite[Lemma~6.1]{OschmannPokorny2023}.
\end{proof}
With the cut-off function $\Phi_\eps$ at hand we derive for the extended quantities $\trhoeps,\tuveceps,\tceps,\Eeps\ceps$ the following version of the momentum equation on the unperforated domain $\OmegaT$.
\begin{lemma}\label{lem:Hom3d mom exts}
    Let the hypotheses and notations of \Cref{thm:Hom} with $d=3$ hold true and let $\eps_0\in (0,1)$ satisfy $2\eps_0^\alpha<\eps_0^3$.
    For any $\eps \in (0,\eps_0)$, let $\Phi_\eps$ denote the cut-off function from Lemma~\ref{lemPhi}.
    Then we have for any $\varphivec\in 
    \mathcal{D}(\OmegaT;\RR^3)$ that
    \begin{equation}\label{Hom3d mom exts}
        \begin{aligned}
            &\int_0^T \int_\Omega 
            \Phi_\eps^{\mathrm{T}}\trhoeps\tuveceps \cdot \partial_t \varphivec 
            +
            \Phi_\eps^{\mathrm{T}}\trhoeps\tuveceps\otimes \tuveceps : \nablax \varphivec
            +
            \Peff(\trhoeps)\div\varphivec
            \, \dd x \, \dd t
            \\
            &-
            \int_0^T \int_\Omega 
            \Svisc(\nablax\tuveceps):\nablax\varphivec - \coup \trhoeps \nablax \Eeps\ceps \cdot \varphivec
            \, \dd x \, \dd t
            =
            \langle\mathcal{G}_\eps,\varphivec\rangle
        \end{aligned}
    \end{equation}
    where
    \begin{equation*}
        \begin{aligned}
            \langle\mathcal{G}_\eps,\varphivec\rangle
            &:=
            \int_0^T \int_\Omega 
            \Big(
            \trhoeps\tuveceps\otimes\tuveceps:\big(\Phi_\eps\nablax\varphivec - \nablax(\Phi_\eps\varphivec)\big)
            +
            \Peff(\trhoeps)\big(\mathbb{I}-\Phi_\eps\bigr):\nablax\varphivec
            \Big)
            \, \dd x \, \dd t
            \\
            &\quad +
            \int_0^T \int_\Omega 
            \Svisc(\nablax\tuveceps):\Big(\big(\Phi_\eps - \mathbb{I}\big)\nablax\varphivec+\big(\nablax(\Phi_\eps\varphivec)-\Phi_\eps\nablax\varphivec\big)\Big)
            \, \dd x \, \dd t
            \\
            &\quad +
            \int_0^T \int_\Omega 
            \coup\trhoeps\nablax\Eeps\ceps \big(\mathbb{I}-\Phi_\eps\big)\varphivec\, \dd x \, \dd t.
        \end{aligned}
    \end{equation*}
    Furthermore, there exists some $\delta \in(0,\infty)$, such that
    \begin{equation}\label{Hom3d est remainder G_eps}
        |\langle\mathcal{G}_\eps,\varphivec\rangle|\leq C\eps^\delta \|\varphivec\|_{L^{\frac{5\gamma-3}{3}}(0,T;W^{1,\frac{5\gamma-3}{3}}(\Omega))}
        \quad \forall\, \varphivec\in \mathcal{D}(\OmegaT;\RR^3).
    \end{equation}
    In particular, we have 
    \begin{equation}\label{Hom3d convergence remainder G_eps}
        \mathcal{G}_\eps \xrightarrow{\eps\to 0} 0 \quad \text{in } L^{\frac{5\gamma-3}{5\gamma-6}}(0,T;W^{-1,\frac{5\gamma-3}{5\gamma-6}}(\Omega;\RR^3)).
    \end{equation}
\end{lemma}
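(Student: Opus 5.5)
The identity \eqref{Hom3d mom exts} should follow by testing the momentum equation \eqref{wkSol mom} on $\Omegaeps$ with $\Phi_\eps\varphivec$. This function vanishes on $\Omega\setminus\Omegaeps$ and lies in $W^{1,s}$ for all $s<\infty$, so it is admissible after a density argument. Since $\Phi_\eps$ is time-independent, $\partial_t(\Phi_\eps\varphivec)=\Phi_\eps\partial_t\varphivec$, and $\rhoeps\uveceps\cdot\Phi_\eps\partial_t\varphivec=\Phi_\eps^{\mathrm T}\trhoeps\tuveceps\cdot\partial_t\varphivec$ after zero extension. Solenoidality of $\Phi_\eps$ (row-wise divergence free) gives $\div(\Phi_\eps\varphivec)=\Phi_\eps:\nablax\varphivec$. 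I will then write every term as its $\Omega$-version with $\varphivec$ in place of $\Phi_\eps\varphivec$, plus a remainder:
\begin{itemize}
\item in the convective and viscous terms, $\nablax(\Phi_\eps\varphivec)=\Phi_\eps\nablax\varphivec+(\nablax(\Phi_\eps\varphivec)-\Phi_\eps\nablax\varphivec)$;
\item in the pressure term, $\Phi_\eps:\nablax\varphivec=\div\varphivec-(\mathbb I-\Phi_\eps):\nablax\varphivec$;
\item in the coupling term, $\Phi_\eps\varphivec=\varphivec-(\mathbb I-\Phi_\eps)\varphivec$.
\end{itemize}
Because all zero-extended quantities vanish outside $\Omegaeps$ and $\nablax\Eeps\ceps=\nablax\ceps$ on $\Omegaeps$, collecting the remainders gives exactly $\mathcal G_\eps$. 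Keeping track of the sign and transpose conventions here is bookkeeping only.

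For \eqref{Hom3d est remainder G_eps} I will bound each remainder by Hölder's inequality, using the uniform bounds of Lemma~\ref{lem:Hom3d bds exts} and the estimates of Lemma~\ref{lemPhi}. Set $r:=\frac{5\gamma-3}{3}$, so that $\|\Peff(\trhoeps)\|_{L^{r/\gamma}}\le C$ and $r/\gamma>1$.
\begin{itemize}
\item \emph{Pressure term.} Bound it by $\|\Peff\|_{L^{r/\gamma}}\|\mathbb I-\Phi_\eps\|_{L^m}\|\nablax\varphivec\|_{L^r}$, where $\frac{\gamma}{r}+\frac1m+\frac1r=1$. Since $\gamma+1<r$ for $\gamma>3$, the exponent $m$ is finite, and $\|\mathbb I-\Phi_\eps\|_{L^m}\le C\eps^{6/m}$.
\item \emph{Coupling term.} $\trhoeps\nablax\Eeps\ceps\in L^\infty(L^{2\gamma/(\gamma+2)})$, and $\varphivec\in L^r(L^\infty)$ because $W^{1,r}\hookrightarrow L^\infty$ for $r>3$. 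With $\mathbb I-\Phi_\eps$ in a suitable $L^m$ this gives a power $\eps^{6/m}$.
\item \emph{Viscous terms.} The first piece uses $\|\Phi_\eps-\mathbb I\|_{L^m}$ in the same way. The second uses the commutator estimate with $s=2$, bounded by $C\eps^{(\alpha-3)/2}\|\varphivec\|_{L^\infty}$, which is small since $\alpha>3$.
\end{itemize}

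The hardest term is the convective one. Here $\trhoeps|\tuveceps|^2\in L^1(0,T;L^{q})$ with $\frac1q=\frac1\gamma+\frac13$, using $\tuveceps\in L^2(L^6)$ and $\trhoeps\in L^\infty(L^\gamma)$. The commutator must then go in $L^{q'}$ with $q'=\frac{3\gamma}{2\gamma-3}$, and Lemma~\ref{lemPhi} bounds it by $\eps^{((3-q')\alpha-3)/q'}\|\varphivec\|_{L^\infty}$. The exponent is positive iff $\alpha>\frac{3}{3-q'}=\frac{2\gamma-3}{\gamma-3}$, which is exactly the hypothesis \eqref{thm:Hom3D restr gamma,alpha}. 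The time integrability is the delicate point. Since the estimate involves $\|\varphivec\|_{L^\infty(L^\infty)}$, it is controlled by the $L^r(W^{1,r})$ norm only up to time integrability. I will handle this by using $\sup_t$ of $\trhoeps$ together with $\tuveceps\in L^2$, which puts $\varphivec$ in $L^\infty$ in time. If the mismatch persists, I will interpolate the $L^1$-in-time density bound with $\trhoeps\in L^r(\OmegaT)$ so that the time exponent dual to $r$ is reached, at the cost of a slightly smaller $\delta$. I expect this exponent bookkeeping to be the main obstacle.

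Taking $\delta>0$ as the minimum of the resulting powers gives \eqref{Hom3d est remainder G_eps}. By density of $\mathcal D(\OmegaT)$ in $L^r(0,T;W^{1,r}_0)$, this bound extends $\mathcal G_\eps$ to an element of the dual space $L^{r'}(0,T;W^{-1,r'}(\Omega))$ with $r'=\frac{5\gamma-3}{5\gamma-6}$ and norm at most $C\eps^\delta$. That is precisely \eqref{Hom3d convergence remainder G_eps}.
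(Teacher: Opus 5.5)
Your derivation of \eqref{Hom3d mom exts} and your treatment of the pressure, coupling and viscous remainders follow the paper. Your $m=\frac{5\gamma-3}{2(\gamma-3)}$ is exactly the paper's $s$, and the viscous commutator is taken in $L^2$ with factor $\eps^{\frac{\alpha-3}{2}}$.

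\textbf{The gap.} It is the convective remainder, at exactly the point you flag and do not close. With $r=\frac{5\gamma-3}{3}$, Lemma~\ref{lem:Hom3d bds exts} only gives $\trhoeps|\tuveceps|^2\in L^1(0,T;L^{\frac{3\gamma}{\gamma+3}}(\Omega))$. Pairing this with $\Phi_\eps\nablax\varphivec-\nablax(\Phi_\eps\varphivec)$ therefore costs $\|\varphivec\|_{L^\infty(0,T;L^\infty(\Omega))}$, which is not bounded by $\|\varphivec\|_{L^r(0,T;W^{1,r}(\Omega))}$. Neither of your repairs fixes this.
\begin{itemize}
\item Your first repair (sup in time of $\trhoeps$, $L^2$ in time of $\tuveceps$) \emph{is} this $L^1$--$L^\infty$ pairing. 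It proves only the weaker inequality with $\|\varphivec\|_{L^\infty(0,T;W^{1,r}(\Omega))}$ on the right, not \eqref{Hom3d est remainder G_eps} or \eqref{Hom3d convergence remainder G_eps}.
\item Your second repair targets the wrong factor. The $L^1$-in-time cap comes from $|\tuveceps|^2\in L^1(0,T;L^3(\Omega))$. Replacing $\trhoeps\in L^\infty(0,T;L^\gamma(\Omega))$ by $\trhoeps\in L^r(\OmegaT)$ can only lower the time exponent.
\end{itemize}
The paper's own proof does not supply the missing step either. It bounds $\trhoeps\tuveceps\otimes\tuveceps$ in $L^2(0,T;L^{\frac{3\gamma}{\gamma+3}}(\Omega))$, which does not follow from Lemma~\ref{lem:Hom3d bds exts}; only $L^1$ in time does. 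So your concern is legitimate.

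\textbf{What can actually be gained.} The only source of extra time integrability is the kinetic-energy bound $\sqrt{\trhoeps}\tuveceps\in L^\infty(0,T;L^2(\Omega))$. Splitting $\trhoeps|\tuveceps|^2=(\sqrt{\trhoeps}|\tuveceps|)^{\theta}\,\trhoeps^{1-\theta/2}|\tuveceps|^{2-\theta}$ places it in $L^{\frac{2}{2-\theta}}(0,T;L^{p_\theta}(\Omega))$, where $\frac{1}{p_\theta}=\frac1\gamma+\frac13+\theta\bigl(\frac13-\frac{1}{2\gamma}\bigr)$. Lemma~\ref{lemPhi} must then be applied with exponent $p_\theta'$.
\begin{itemize}
\item Since $\alpha>\frac{2\gamma-3}{\gamma-3}$ is strict, a small $\theta>0$ still gives $(3-p_\theta')\alpha-3>0$. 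Hence $\mathcal{G}_\eps\to0$ in some $L^{a}(0,T;W^{-1,b}(\Omega))$ with $a,b>1$. This suffices to pass to the limit with fixed test functions and for the time-derivative bound in Lemma~\ref{lem:Hom3d weak lim}.
\item Reaching the time exponent $r'$ dual to $r$ requires $\theta\ge 2/r$. Then $p_\theta'\ge 3$ whenever $5\gamma^2-24\gamma+18\le 0$, i.e.\ for $3<\gamma\le\frac{12+3\sqrt6}{5}\approx 3.87$. There the bound of Lemma~\ref{lemPhi} blows up; for larger $\gamma$ a much larger $\alpha$ is needed. Using $\trhoeps\in L^r(\OmegaT)$ inside the splitting does not change this threshold.
\end{itemize}
So \eqref{Hom3d est remainder G_eps} in its stated $L^r$-in-time form cannot be reached by this H\"older argument. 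You should state and prove a weaker norm on $\varphivec$ (e.g.\ $L^\infty(0,T;W^{1,r}(\Omega))$, or $L^{2/\theta}(0,T;W^{1,r}(\Omega))$ for small $\theta$). Then check that this form is what the later compactness and effective-viscous-flux steps actually require.
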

\begin{proof}
    Let $\varphivec\in \mathcal{D}(\OmegaT;\RR^3)$.
    Then we have that $\Phi_\eps\varphivec \in C^\infty_c((0,T);W^{1,\infty^-}_0(\Omegaeps;\RR^3))$.
    By a density argument, $\Phi_\eps \varphivec$ is a valid test function for the momentum equation \eqref{wkSol mom} on the porous domain $(0,T)\times\Omegaeps$ which is satisfied by the weak solution $(\rhoeps,\uveceps,\ceps)$.
    We obtain
    \begin{equation*}
        \begin{aligned}
            &\int_0^T \int_{\Omegaeps}
            \rhoeps\uveceps \cdot \partial_t(\Phi_\eps\varphivec)
            +
            \rhoeps\uveceps\otimes\uveceps : \nablax(\Phi_\eps\varphivec)
            +
            \Peff(\rhoeps)\div(\Phi_\eps\varphivec)
            \, \dd x \, \dd t
            \\
            &-
            \int_0^T \int_{\Omegaeps}
            \Svisc(\nablax\uveceps):\nablax(\Phi_\eps\varphivec)-\coup\rhoeps\nablax \ceps\cdot  \Phi_\eps \varphivec
            \, \dd x \, \dd t 
            = 0.
        \end{aligned}
    \end{equation*}
    Using that $\Phi_\eps=0$, $\trhoeps=0$, $\tuveceps=0$ on $\Omega\setminus\Omegaeps$, $\Eeps\ceps = \ceps$ on $\Omegaeps$, as well as $\div\Phi_\eps=0$ on $\Omega$, we may rewrite this relation as
    \begin{equation*}
        \begin{aligned}
            &\int_0^T \int_{\Omega}
            \Phi_\eps^{\mathrm{T}}\trhoeps\tuveceps \cdot \partial_t\varphivec
            +
            \trhoeps\tuveceps\otimes\tuveceps : \nablax(\Phi_\eps\varphivec)
            +
            \Peff(\trhoeps)\Phi_\eps:\nablax\varphivec
            \, \dd x \, \dd t
            \\
            &-
            \int_0^T \int_{\Omega}
            \Svisc(\nablax\tuveceps):\nablax(\Phi_\eps\varphivec)-\coup\trhoeps\nablax \Eeps\ceps\cdot  \Phi_\eps \varphivec
            \, \dd x \, \dd t 
            = 0.
        \end{aligned}
    \end{equation*}
    Rearranging this equation leads to \eqref{Hom3d mom exts}.
    Since we have
    \begin{align*}
            &\frac{3}{5\gamma-3} + \frac{3\gamma}{5\gamma-3} < 1
            \quad \forall\, \gamma \in (3,\infty),
            &&
            \frac{3}{5\gamma-3} + \frac{1}{2} <1 
            \quad \forall \gamma \in (1,\infty),
    \end{align*}
    and
    \begin{equation*}
        \frac{5\gamma-3}{3}>3 \quad \forall\, \gamma \in \left(\frac{12}{5},\infty\right),
    \end{equation*}
    we can estimate with Lemma~\ref{lem:Hom3d bds exts}, Lemma~\ref{lemPhi}, the Sobolev embedding $W^{1,\frac{5\gamma-3}{3}}(\Omega)\hookrightarrow L^\infty(\Omega)$, as well as H\"older's inequality,
    \begin{equation*}
        \begin{aligned}
            |\langle \mathcal{G}_\eps,\varphivec\rangle|
            &\leq 
            C\|\trhoeps\tuveceps\otimes\tuveceps\|_{L^2(0,T;L^{\frac{3\gamma}{\gamma+3}}(\Omega))} \|\Phi_\eps\nablax\varphivec-\nablax(\Phi_\eps\varphivec)\|_{L^2(0,T;L^{\frac{3\gamma}{2\gamma-3}}(\Omega))}
            \\
            &\quad+
            C\|\Peff(\trhoeps)\|_{L^{\frac{5\gamma-3}{3\gamma}}(\OmegaT)} \|\Phi_\eps-\mathbb{I}\|_{L^s(\Omega)} \|\nablax\varphivec\|_{L^{\frac{5\gamma-3}{3}}(\OmegaT)}
            \\
            &\quad +
            C\|\Svisc(\nablax\tuveceps)\|_{L^2(\OmegaT)} 
            \Big(
            \|\Phi_\eps-\mathbb{I}\|_{L^s(\Omega)}\|\nablax\varphivec\|_{L^{\frac{5\gamma-3}{3}}(\OmegaT)} 
            +
            \|\Phi_\eps\nablax\varphivec - \nablax(\Phi_\eps\mathbb{I})\|_{L^2(\OmegaT)}
            \Big)
            \\
            &\quad +
            C\|\trhoeps \nablax\Eeps\ceps\|_{L^\infty(0,T;L^{\frac{2\gamma}{\gamma+2}}(\Omega))} \|\Phi_\eps-\mathbb{I}\|_{L^s(\Omega)}\|\varphivec\|_{L^{\frac{5\gamma-3}{3}}(0,T; L^\infty(\Omega))}
            \\
            &\leq 
            C\Big(
            \|\Phi_\eps\nablax\varphivec - \nablax(\Phi_\eps\varphivec)\|_{L^2(0,T;L^{\frac{3\gamma}{2\gamma-3}}(\Omega))} 
            +
            \|\Phi_\eps\nablax\varphivec - \nablax(\Phi_\eps\varphivec)\|_{L^2(\OmegaT)}
            \Bigr)
            \\
            &\quad +
            C\|\Phi_\eps-\mathbb{I}\|_{L^s(\Omega)}\|\varphivec\|_{L^{\frac{5\gamma-3}{3}}(0,T;W^{1,\frac{5\gamma-3}{3}}(\Omega))}
            \\
            &\leq 
            C\Big(
            \eps^{\frac{2\gamma-3}{3\gamma}\big((3-\frac{3\gamma}{2\gamma-3})\alpha-3\big)} + \eps^{\frac{\alpha-3}{2}}
            \Big)
            \|\varphivec\|_{L^2(0,T;L^\infty(\Omega))}
            +
            \eps^{\frac{6}{s}}\|\varphivec\|_{L^{\frac{5\gamma-3}{3}}(0,T;W^{1,\frac{5\gamma-3}{3}}(\Omega))}
            \\
            &\leq 
            C\eps^\delta \|\varphivec\|_{L^{\frac{5\gamma-3}{3}}(0,T;W^{1,\frac{5\gamma-3}{3}}(\Omega))},
        \end{aligned}
    \end{equation*}
    with
    \begin{align*}
        s = \max \left \{ \frac{5\gamma-3}{2(\gamma-3)}, \frac{2(5\gamma-3)}{5\gamma-9}, \frac{2\gamma}{\gamma-2} \right\} = \frac{5\gamma-3}{2(\gamma-3)},
    \end{align*}
    and
    \begin{equation*}
        \delta:=
        \min\left\{
        \frac{2\gamma-3}{3\gamma}\bigg(\Big(3-\frac{3\gamma}{2\gamma-3}\Big)\alpha-3\bigg), \frac{\alpha-3}{2}, \frac{6}{s}
        \right\}.
    \end{equation*}
    In view of the assumptions \eqref{thm:Hom3D restr gamma,alpha} we have $\delta>0$.
    Thus, we conclude \eqref{Hom3d est remainder G_eps}, and, in particular,
    \begin{equation*}
        \mathcal{G}_\eps \xrightarrow{\eps \to 0} 0
        \quad \text{in } \left(L^{\frac{5\gamma-3}{3}}(0,T;W^{1,\frac{5\gamma-3}{3}}_0(\Omega;\RR^3))\right)^* \simeq  L^{\frac{5\gamma-3}{5\gamma-6}}(0,T;W^{-1,\frac{5\gamma-3}{5\gamma-6}}(\Omega;\RR^3)).
    \end{equation*}
\end{proof}
\begin{remark}
    It is crucial that the second relation in \eqref{thm:Hom3D restr gamma,alpha} is fulfilled with a strict inequality.
    In the case $\alpha = \max\big\{3,\frac{2\gamma-3}{\gamma-3}\big\}$, we would only obtain $\delta = 0$ in the proof of \Cref{lem:Hom3d mom exts}, such that we cannot show $\mathcal{G}_\eps \to 0$. Indeed this is even not expected: in the incompressible case corresponding to (roughly speaking) $\gamma = \infty$, the case $\alpha=3$ leads to an additional Brinkman term; see \cite{Allaire1990a, BellaOschmann2022} for details. Unfortunately, in the fully compressible case, this question is completely open.
\end{remark}

\subsection{Homogenization limit}\label{subsec:Hom3DLim}
In this subsection, we perform the homogenization limit $\eps \to 0$ and complete the proof of \Cref{thm:Hom}.
The uniform bounds for the extended quantities $\trhoeps,\tuveceps,\tceps,\Eeps\ceps$ are strong enough such that we can exploit compactness arguments that allow us, up to a subsequence, to pass to the limit $\eps \to 0$ in the weak formulations $\eqref{Hom3d cont exts}$, $\eqref{Hom3d parab exts}$, and $\eqref{Hom3d mom exts}$.
More specifically, we have the following statement.
\begin{lemma}\label{lem:Hom3d weak lim}
    Let the hypotheses and notations of \Cref{thm:Hom} with $d=3$ hold true.
    Assume that $\eps_0>0$ satisfies $2\eps_0^\alpha<\eps_0^3$ and let $\Phi_\eps$ denote the cut-off function from Lemma~\ref{lemPhi} for any $\eps \in (0,\eps_0)$.
    Then we have, after passing to a non-relabeled subsequence,
    \begin{equation}\label{Hom3d weak limits}
        \begin{aligned}
            &\trhoeps \weak \rho \quad \text{in } L^{\frac{5\gamma-3}{3}}(\OmegaT),
            \quad 
            \trhoeps \to \rho \quad \text{in } C_{\mathrm{w}}([0,T];L^\gamma(\Omega)),
            \\
            &\tuveceps \weak \uvec \quad \text{in } L^2(0,T;W^{1,2}_0(\Omega)),
            \quad
            \Phi^\mathrm{T}_\eps \trhoeps\tuveceps \to \rho\uvec \quad \text{in } C_{\mathrm{w}}([0,T];L^{\frac{2\gamma}{\gamma+1}}(\Omega)),
            \\
            &\trhoeps \nablax\Eeps\ceps \weak \overline{\rho\nablax c} \quad \text{in } L^\infty(0,T;L^{\frac{2\gamma}{\gamma+2}}(\Omega)),
            \quad 
            \Peff(\trhoeps)\weak \overline{\Peff} \quad \text{in } L^{\frac{5\gamma-3}{3\gamma}}(\OmegaT),
            \quad
            \\
            &\Eeps\ceps \weakstar c \quad \text{in } L^\infty(0,T;W^{1,2}(\Omega)),
            \quad
            \Eeps\ceps \to c \quad \text{in } C([0,T];L^{2}(\Omega)),
            \\
            &\tceps \to c \quad \text{in } C([0,T];L^2(\Omega)),
            \quad
            \partial_t \tilde{c}_\eps \weak \partial_t c \quad \text{in } L^2(\OmegaT).
        \end{aligned}
    \end{equation}
    Moreover, we have that the triplet $(\rho,\uvec,c)$ satisfies the regularity \eqref{wkSol reg} with $\Omegaeps$ replaced by $\Omega$, that
    \begin{align}
        &\int_0^T\int_\Omega 
        \rho\partial_t \varphi + \rho\uvec \cdot \nablax \varphi 
        \, \dd x \, \dd t
        =
        0,\label{hom3d limit cont}
        \\
        &\int_0^T \int_\Omega
        \paracoup c \partial_t \varphi - \kappa \nablax c \cdot \nablax \varphi - \coup(c-\rho)\varphi 
        \, \dd x \, \dd t = 0 \label{hom3d limit parab}
    \end{align}
    for any test function $\varphi \in \mathcal{D}(\RR^3_T)$,
    \begin{equation}\label{hom3d limit mom}
        \begin{aligned}
            &\int_0^T \int_\Omega 
            \rho\uvec \cdot \partial_t \varphivec + \rho\uvec \otimes\uvec : \nablax \varphivec + \overline{\Peff}\div\varphivec
            \, \dd x \, \dd t
            \\
            &\quad =
            \int_0^T \int_\Omega 
            \Svisc(\nablax\uvec):\nablax\uvec - \coup \overline{\rho\nablax c}\cdot \varphivec
            \, \dd x \, \dd t
        \end{aligned}
    \end{equation}
    for any test function $\varphivec \in \mathcal{D}(\OmegaT;\RR^3)$, and
    \begin{equation}\label{hom3d limit IC}
        \rho(0)=\rho_0,
        \qquad 
        (\rho\uvec)(0)=(\rho\uvec)_0,
        \qquad 
        c(0)=c_0
        \qquad \text{a.e.~in } \Omega.
    \end{equation}
\end{lemma}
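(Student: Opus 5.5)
The plan is to extract weakly convergent subsequences from the uniform bounds of \Cref{lem:Hom3d bds exts}, upgrade them to the strong or $\Cw$-convergences via Aubin--Lions/Arzel\`a--Ascoli type arguments driven by the extended equations of \Cref{lem:Hom3d cont+parab exts} and \Cref{lem:Hom3d mom exts}, and then pass to the limit term by term.

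\textbf{Convergences not involving products.} By \eqref{Hom3d bds exts I} and reflexivity (Banach--Alaoglu in the $L^\infty$ cases), a subsequence gives:
\begin{itemize}
\item $\trhoeps\weak\rho$ in $L^{\frac{5\gamma-3}{3}}(\OmegaT)$;
\item $\tuveceps\weak\uvec$ in $L^2(0,T;W^{1,2}_0(\Omega))$, which is a weakly closed subspace, so the zero trace persists;
\item $\Peff(\trhoeps)\weak\overline{\Peff}$;
\item $\trhoeps\nablax\Eeps\ceps\weak\overline{\rho\nablax c}$, since the product is bounded in $L^\infty(L^{\frac{2\gamma}{\gamma+2}})$ by H\"older;
\item $\Eeps\ceps\weakstar c$ in $L^\infty(W^{1,2})$, with $\partial_t\Eeps\ceps$ bounded in $L^2$.
\end{itemize}
Aubin--Lions (or Arzel\`a--Ascoli with the compact embedding $W^{1,2}\hookrightarrow L^2$ and equicontinuity $\|\Eeps\ceps(t)-\Eeps\ceps(s)\|_{L^2}\le C|t-s|^{1/2}$) gives $\Eeps\ceps\to c$ in $C([0,T];L^2)$. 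Then \eqref{Hom3d bds exts II} gives $\tceps\to c$ in $C([0,T];L^2)$ as well. Since $\partial_t\tceps$ is bounded in $L^2$ and converges distributionally to $\partial_t c$, we get $\partial_t\tceps\weak\partial_t c$.

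\textbf{Strong/$\Cw$ convergence of density and momentum.} For $\trhoeps$, \eqref{Hom3d cont exts} shows that $t\mapsto\int\trhoeps\phi$ is equicontinuous for $\phi\in\mathcal D(\Omega)$, because $\trhoeps\tuveceps$ is bounded in $L^\infty(L^{\frac{2\gamma}{\gamma+1}})$. Combined with the $L^\infty(L^\gamma)$ bound and a density/diagonal argument, this gives $\trhoeps\to\rho$ in $\Cw([0,T];L^\gamma)$.

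For $\mathbf m_\eps:=\Phi^{\mathrm T}_\eps\trhoeps\tuveceps$, \eqref{Hom3d mom exts} gives the same equicontinuity for $t\mapsto\int\mathbf m_\eps\cdot\varphivec$. Every term is bounded, and $\mathcal G_\eps$ is bounded by \eqref{Hom3d est remainder G_eps}, so $\mathbf m_\eps\to\mathbf m$ in $\Cw([0,T];L^{\frac{2\gamma}{\gamma+1}})$.

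Since $L^\gamma\hookrightarrow\hookrightarrow W^{-1,2}$ for $\gamma>3/2$, the $\Cw$ convergence upgrades to $\trhoeps\to\rho$ in $C([0,T];W^{-1,2})$. Pairing this with $\tuveceps\weak\uvec$ in $L^2(W^{1,2}_0)$ yields $\trhoeps\tuveceps\weak\rho\uvec$. As $\Phi_\eps-\mathbb I\to0$ in every $L^s$ and $\Phi_\eps$ is bounded, $\mathbf m=\rho\uvec$.

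\textbf{Convective term (main obstacle).} Here we need $\trhoeps\tuveceps\otimes\tuveceps\weak\rho\uvec\otimes\uvec$. The first approach to try is the same trick one level up. Write
\[
\trhoeps\tuveceps\otimes\tuveceps=\mathbf m_\eps\otimes\tuveceps+(\mathbb I-\Phi_\eps)^{\mathrm T}\trhoeps\tuveceps\otimes\tuveceps .
\]
The second piece vanishes by H\"older, since $\trhoeps\tuveceps\otimes\tuveceps$ is bounded in $L^2(L^{\frac{3\gamma}{\gamma+3}})$ and $\|\Phi_\eps-\mathbb I\|_{L^s}\to0$. For the first piece, note that $L^{\frac{2\gamma}{\gamma+1}}\hookrightarrow\hookrightarrow W^{-1,2}$ because $\frac{2\gamma}{\gamma+1}>\frac65$. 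Hence $\mathbf m_\eps\to\rho\uvec$ in $C([0,T];W^{-1,2})$, and its product with the weakly convergent $\tuveceps$ converges in $\mathcal D'$. Uniform integrability bounds identify the limit as $\rho\uvec\otimes\uvec$.

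\textbf{Passing to the limit in the equations.} The limits \eqref{hom3d limit cont} and \eqref{hom3d limit parab} follow directly, using $\langle\mathcal F_\eps,\varphi\rangle\to0$. For \eqref{hom3d limit mom}, use \eqref{Hom3d convergence remainder G_eps} together with $\Svisc(\nablax\tuveceps)\weak\Svisc(\nablax\uvec)$.

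\textbf{Regularity and initial data.} The regularity \eqref{wkSol reg} follows from weak lower semicontinuity of norms, and $c\in L^2(W^{2,2})$ follows from elliptic regularity applied to the limiting parabolic equation. For the initial data, the $\Cw$ and $C$ convergences let us evaluate at $t=0$ and compare with \eqref{thm:Hom IC}. For the momentum we use $\Phi_\eps^{\mathrm T}\widetilde{(\rho\uvec)}_{0\eps}\to(\rho\uvec)_0$, obtained from the convergence $|\widetilde{(\rho\uvec)}_{0\eps}|^2/\tilde\rho_{0\eps}\to|(\rho\uvec)_0|^2/\rho_0$ together with $\tilde\rho_{0\eps}\to\rho_0$.
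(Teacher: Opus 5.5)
Your proposal follows the paper's proof essentially step for step. That means Banach--Alaoglu from \Cref{lem:Hom3d bds exts}, $\Cw$-compactness of $\trhoeps$ and $\Phi_\eps^{\mathrm T}\trhoeps\tuveceps$ from the extended continuity and momentum equations, the compact embeddings of $L^\gamma$ and $L^{\frac{2\gamma}{\gamma+1}}$ into $W^{-1,2}$ to identify $\rho\uvec$ and $\rho\uvec\otimes\uvec$, Aubin--Lions for $\Eeps\ceps$ together with \eqref{Hom3d bds exts II} for $\tceps$, and parabolic regularity for $c$. There you should add that $c\in L^2(0,T;W^{2,2}(\Omega))$ together with $\partial_t c\in L^2(\OmegaT)$ gives $c\in C([0,T];W^{1,2}(\Omega))$, which weak lower semicontinuity alone does not.

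The one step that is not literally justified, and which the paper's proof shares, is the initial momentum. $L^1$-convergence of the scalar $|\widetilde{(\rho\uvec)}_{0\eps}|^2/\tilde\rho_{0\eps}$ together with $\tilde\rho_{0\eps}\to\rho_0$ cannot force $\widetilde{(\rho\uvec)}_{0\eps}\to(\rho\uvec)_0$, since flipping the sign of the momenta leaves both hypotheses intact. So \eqref{thm:Hom IC} must be read as the vector convergence $\widetilde{(\rho\uvec)}_{0\eps}/\sqrt{\tilde\rho_{0\eps}}\to(\rho\uvec)_0/\sqrt{\rho_0}$ in $L^2(\Omega)$, which does give $\widetilde{(\rho\uvec)}_{0\eps}\to(\rho\uvec)_0$ in $L^{\frac{2\gamma}{\gamma+1}}(\Omega)$.
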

\begin{proof}
    Due to Lemma~\ref{lem:Hom3d bds exts}, Lemma~\ref{lemPhi}, the Sobolev embedding $W^{1,2}(\Omega)\hookrightarrow L^6(\Omega)$, and the Banach--Alaoglu theorem, we have, after passing to a non-relabeled subsequence,
    \begin{equation}\label{Hom3d Alaoglu cv}
        \begin{aligned}
            &\trhoeps \weak \rho \quad \text{in } L^{\frac{5\gamma-3}{3\gamma}}(\OmegaT),
            \quad 
            \Peff(\trhoeps)\weak \overline{\Peff}\quad \text{in } L^{\frac{5\gamma-3}{3\gamma}}(\OmegaT),
            \quad 
            \tuveceps\weak \uvec \quad \text{in } L^2(0,T;W^{1,2}_0(\Omega)),
            \quad
            \\
            &\Phi^\mathrm{T}_\eps\trhoeps\tuveceps\otimes\tuveceps \weak \overline{\rho\uvec\otimes\uvec} \quad \text{in } L^2(0,T;L^{\frac{3\gamma}{\gamma+3}}(\Omega)),
            \quad 
            \Eeps\ceps \weakstar c \quad \text{in } L^\infty(0,T;W^{1,2}(\Omega)),
            \\
            &\trhoeps\nablax\Eeps\ceps \weakstar \overline{\rho\nablax c} \quad \text{in } L^\infty(0,T;L^{\frac{2\gamma}{\gamma+2}}(\Omega)),
            \quad
            \partial_t \tilde{c}_\eps \weak \partial_t c \quad \text{in } L^2(\OmegaT).
        \end{aligned}
    \end{equation}
    Note that the weak limits of the extensions $\Eeps\ceps$ and $\tilde{c}_\eps$ coincide thanks to the first estimate in \eqref{Hom3d bds exts II}. Furthermore, using the Sobolev embedding $W^{1,2}(\Omega)\hookrightarrow L^6(\Omega)$ and H\"older's inequality, we deduce from \eqref{Hom3d bds exts I}, \eqref{Hom3d cont exts}, \eqref{Hom3d mom exts}, \eqref{Hom3d est remainder G_eps}, and \Cref{lemPhi} that
    \begin{equation*}
        \|\partial_t \trhoeps \|_{L^s(0,T;W^{-1,s}(\Omega))}
        +
        \|\partial_t (\Phi_\eps^{\mathrm{T}}\trhoeps\tuveceps)\|_{L^s(0,T;W^{-1,s}(\Omega))}
        \leq 
        C
    \end{equation*}
    for some $s \in (1,\infty)$. Thanks to \eqref{Hom3D energybds}, \Cref{lemPhi}, and the Sobolev embedding $W^{1,2}(\Omega)\hookrightarrow L^6(\Omega)$, this implies, after passing to a non-relabeled subsequence,
    \begin{equation}\label{Hom3d Arzela cv}
        \trhoeps \to \rho \quad \text{in } \Cw([0,T];L^\gamma(\Omega)),
        \qquad 
        \Phi_\eps^{\mathrm{T}}\trhoeps \tuveceps \to \overline{\rho \uvec} \quad \text{in } \Cw([0,T];L^{\frac{2\gamma}{\gamma+1}}(\Omega)).
    \end{equation}
    Using the compact Sobolev embedding $L^\gamma(\Omega)\hookrightarrow\hookrightarrow W^{-1,2}(\Omega)$ and combining the first convergence in \eqref{Hom3d Arzela cv} with the third convergence in \eqref{Hom3d Alaoglu cv} yields
    \begin{equation}\label{Hom3d id lim momentum}
        \overline{\rho\uvec}= \rho\uvec \quad \text{a.e.~in } \OmegaT.
    \end{equation}
    Similarly, due to the compact Sobolev embedding $L^{\frac{2\gamma}{\gamma+1}}(\Omega)\hookrightarrow\hookrightarrow W^{-1,2}(\Omega)$, the second convergence in \eqref{Hom3d Arzela cv}, the third convergence in \eqref{Hom3d Alaoglu cv}, and \eqref{Hom3d id lim momentum}, we have
    \begin{equation}\label{Hom3d id lim conv term}
        \overline{\rho\uvec\otimes\uvec} 
        = 
        \rho\uvec\otimes\uvec \quad \text{a.e.~in } \OmegaT.
    \end{equation}
    By virtue of Lemma~\ref{lem:Hom3d bds exts} and since the embedding $W^{1,2}(\Omega)\hookrightarrow\hookrightarrow L^2(\Omega)$ is compact we conclude with the Aubin--Lions theorem that
    \begin{equation}\label{Hom3d strong conv c}
        \Eeps\ceps \to c \quad \text{in } C([0,T];L^{2}(\Omega)).
    \end{equation}
    Thanks to \eqref{Hom3d bds exts II} this implies 
    \begin{equation}\label{Hom3d strong conv c II}
        \tceps \to c \quad \text{in } C([0,T];L^2(\Omega)).
    \end{equation}
    In view of \eqref{Hom3d Arzela cv}--\eqref{Hom3d strong conv c II} we have shown \eqref{Hom3d weak limits}.
    Then we can pass to the limit $\eps\to 0$ in \eqref{Hom3d cont exts}, \eqref{Hom3d parab exts}, and \eqref{Hom3d mom exts}, and obtain that $(\rho,\uvec,c)$ satisfies \eqref{hom3d limit cont}--\eqref{hom3d limit mom}.
    Using the assumptions \eqref{thm:Hom IC} as well as Lemma~\ref{lemPhi}, we infer for the initial conditions that
    \begin{equation*}
        \begin{aligned}
            &\trhoeps(0)=\tilde{\rho}_{0\eps}\to \rho_0
            \quad \text{in } L^\gamma(\Omega),
            \qquad 
            (\Phi_\eps^{\mathrm{T}}\trhoeps\tuveceps) (0) = \Phi_\eps^{\mathrm{T}}\widetilde{(\rho\uvec)}_{0\eps} \to (\rho\uvec)_0 
            \quad \text{in } L^{\big(\frac{2\gamma}{\gamma+1}\big)^-}(\Omega),
            \\
            &(\Eeps\ceps)(0) = \Eeps c_{0\eps} \to c_0 \quad \text{in } W^{1,2}(\Omega).
        \end{aligned}
    \end{equation*}
    Thanks to the convergences in \eqref{Hom3d Arzela cv} and in \eqref{Hom3d strong conv c}, this leads to \eqref{hom3d limit IC}.
    In particular, we have that $c$ is the weak solution of the linear parabolic problem
    \begin{equation*}
        \begin{aligned}
            &\paracoup\partial_t c - \kappa \Deltax c + \coup c = \coup \rho 
            &&\text{in } \OmegaT,
            \\
            &\nablax c \cdot \mathbf{n}_{\partial\Omega} = 0
            &&\text{on } (0,T) \times \partial\Omega,
            \\
            &c(0)=c_0 &&\text{in } \Omega.
        \end{aligned}
    \end{equation*}
    Using parabolic regularity results (see e.g.~\cite[Theorem~11.29]{FeireislNovotny2017singlim}) yields $c \in C([0,T];W^{1,2}(\Omega))$. In total we have shown that the triplet $(\rho,\uvec,c)$ satisfies the regularity \eqref{wkSol reg} with $\Omegaeps$ replaced by $\Omega$.
    \end{proof}
    To complete the proof of \Cref{thm:Hom}, we have to identify
    \begin{equation*}
        \overline{\Peff} = \Peff(\rho),
        \quad 
        \overline{\rho\nablax c} = \rho\nablax c
        \quad \text{a.e.~in } \OmegaT.
    \end{equation*}
    These relations are valid once we show that $\trhoeps\to \rho$ strongly in $L^1(\OmegaT)$.
    To this end, we first deduce a weak compactness property for the effective viscous flux associated with the extended quantities.
    Then we exploit this property by using the technique from \cite{Feireisl2002} to show the strong convergence.
    \begin{lemma}\label{lem:Hom3d str cv density}
        Let the hypotheses and notations of \Cref{lem:Hom3d weak lim} hold true.
        Then we have 
        \begin{equation*}
            \trhoeps \to \rho \quad \text{in } L^{\big(\frac{5\gamma-3}{3}\big)^-}(\OmegaT) 
        \end{equation*}
        and, in particular,
        \begin{equation*}
            \overline{p_\coup} = p_\coup(\rho),
            \quad
            \overline{\rho\nablax c} = \rho\nablax c
            \qquad \text{a.e.~in } \OmegaT.
        \end{equation*}
    \end{lemma}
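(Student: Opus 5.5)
The plan is to follow Feireisl's approach for compressible Navier--Stokes, adapted to the extended quantities. The argument has two stages: first the effective viscous flux identity, then Feireisl's oscillation defect measure argument with truncations $T_k$. The latter is needed because $p_\coup$ may be non-monotone, unless $\coup$ is large.

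\textbf{Step 1: effective viscous flux.} Take $\psi\in\mathcal{D}((0,T))$ and $\phi\in\mathcal{D}(\Omega)$. Test \eqref{Hom3d mom exts} with
\[
\varphivec=\psi\phi\,\nablax\Deltax^{-1}[\mathbf{1}_\Omega T_k(\trhoeps)],
\qquad T_k(r)=k\,T(r/k),
\]
and test the limit equation \eqref{hom3d limit mom} with the analogous function built from $\overline{T_k(\rho)}$. The renormalized continuity equation for $T_k(\trhoeps)$ on $\RR^3$ is available from \eqref{Hom3d cont exts} via DiPerna--Lions, since $\trhoeps\in L^2$ for $\gamma>3$. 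Using it, we get
\[
\lim_{\eps\to0}\int\psi\phi\Big(\Peff(\trhoeps)-(\tfrac43\mu+\eta)\div\tuveceps\Big)T_k(\trhoeps)
=\int\psi\phi\Big(\overline{\Peff}-(\tfrac43\mu+\eta)\div\uvec\Big)\overline{T_k(\rho)}.
\]
The nontrivial points in this step are as follows.
\begin{itemize}
\item The convective term carries $\Phi_\eps^{\mathrm T}$. Compare $\Phi_\eps^{\mathrm T}\trhoeps\tuveceps$ with $\trhoeps\tuveceps$ in $L^{1}$-type norms using $\|\Phi_\eps-\mathbb{I}\|_{L^s}\le C\eps^{6/s}$. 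Then apply the div--curl/commutator lemma (Feireisl) to $\trhoeps\tuveceps$ and $T_k(\trhoeps)$, together with the time compactness from \eqref{Hom3d Arzela cv}.
\item The remainder $\langle\mathcal{G}_\eps,\varphivec\rangle$ vanishes by \eqref{Hom3d est remainder G_eps}. Here $\varphivec$ is bounded in $L^\infty(0,T;W^{1,q})$ for every $q<\infty$, since $T_k$ is bounded.
\item The term $\coup\trhoeps\nablax\Eeps\ceps\cdot\varphivec$ is compact. Indeed $\nablax\Deltax^{-1}T_k(\trhoeps)$ converges strongly in $C_{\mathrm w}$ and in $L^p$ by the Div-compactness from the renormalized equation, while $\trhoeps\nablax\Eeps\ceps$ converges weakly in $L^\infty(0,T;L^{2\gamma/(\gamma+2)})$.
\end{itemize}

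\textbf{Step 2: strong convergence via $L_k$.} Set $L_k(\rho)=\rho\int_1^\rho T_k(z)/z^2\,\dd z$. From the renormalized equations for $\trhoeps$ and for the limit $\rho$ (the latter justified since $\rho\in L^2$), together with the flux identity of Step 1, we obtain
\[
\int_\Omega\big(\overline{L_k(\rho)}-L_k(\rho)\big)(\tau)
=\frac{1}{\frac43\mu+\eta}\int_0^\tau\!\!\int_\Omega\Big(\overline{\Peff}\,\overline{T_k(\rho)}-\overline{\Peff(\rho)T_k(\rho)}\Big)
+\int_0^\tau\!\!\int_\Omega\big(T_k(\rho)-\overline{T_k(\rho)}\big)\div\uvec .
\]
Initial data converge strongly by \eqref{thm:Hom IC}, so there is no defect at time $0$.

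\textbf{Step 3: the non-monotone pressure (main obstacle).} Write $\Peff=h+\tfrac\coup2 r^2+q$. The part $h+\tfrac\coup2r^2$ is increasing, so it yields the favorable sign
\[
\overline{P\,T_k}\ge\overline{P}\,\overline{T_k}.
\]
The part $q$ is compactly supported and smooth, hence Lipschitz. Bound its contribution by
\[
\int\big|\overline{q}\,\overline{T_k(\rho)}-\overline{q\,T_k(\rho)}\big|\le C\int\big(\overline{\rho\log\rho}-\rho\log\rho\big),
\]
as in Feireisl's treatment of non-monotone pressures. To see this, split $q(r)=(q(r)+Lr)-Lr$: the first part is monotone, and the linear part combined with $T_k$ gives a term controlled by the defect $\overline{\rho T_k(\rho)}-\rho\,\overline{T_k(\rho)}$. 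We also need the oscillation defect measure bound
\[
\sup_k\limsup_\eps\|T_k(\trhoeps)-T_k(\rho)\|_{L^{\gamma+1}}<\infty,
\]
which follows from the flux identity and $h'\ge c_hr^{\gamma-1}$; it makes the $\div\uvec$ term vanish as $k\to\infty$. Letting $k\to\infty$ ($L_k\to\rho\log\rho$) then gives a Gronwall inequality for
\[
D(\tau)=\int_\Omega\big(\overline{\rho\log\rho}-\rho\log\rho\big)(\tau)\ge0,
\qquad D(0)=0 .
\]
Hence $D\equiv0$, and strict convexity gives $\trhoeps\to\rho$ in $L^1$.

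\textbf{Conclusion.} Interpolating with the uniform bound in $L^{(5\gamma-3)/3}$ upgrades this to convergence in $L^{((5\gamma-3)/3)^-}$. Continuity of $\Peff$ then identifies $\overline{\Peff}=\Peff(\rho)$. Finally, strong convergence of $\trhoeps$ in $L^2$, combined with weak convergence of $\nablax\Eeps\ceps$ in $L^2$, gives $\overline{\rho\nablax c}=\rho\nablax c$.
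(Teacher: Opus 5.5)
Your overall route is the paper's: an effective viscous flux identity for $T_k(\trhoeps)$ (the paper gets it from \cite[Proposition~A.1]{OschmannWendt2026-exist}), followed by Feireisl's $L_k$/Gronwall argument. Steps~1, 2 and the conclusion are fine. The gap is in Step~3, exactly where you place the main obstacle. Splitting $q=(q+Lr)-Lr$ only gives
\[
\overline{q}\,\overline{T_k(\rho)}-\overline{q\,T_k(\rho)}\le L\big(\overline{\rho\,T_k(\rho)}-\rho\,\overline{T_k(\rho)}\big),
\]
and this defect is not controlled by $D$. Since $\trhoeps$ is bounded in $L^{\frac{5\gamma-3}{3}}(\OmegaT)$ with $\frac{5\gamma-3}{3}>2$, the right-hand side tends to $L(\overline{\rho^2}-\rho^2)$ as $k\to\infty$, which is an $L^2$-type oscillation defect. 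For an equal-weight oscillation between $0$ and $2m$ one has $\overline{\rho^2}-\rho^2=m^2$, but $\overline{\rho\log\rho}-\rho\log\rho=m\log 2$. So no bound $\overline{\rho^2}-\rho^2\le C(\overline{\rho\log\rho}-\rho\log\rho)$ is available, and the Gronwall inequality for $D$ does not close. For fixed $k$ you only get a constant of order $k$, which is useless in the limit $k\to\infty$. You also cannot switch to an $L^2$-type functional, because $\rho\log\rho$ is precisely the renormalization for which the $\div\uvec$ terms cancel. The linear splitting throws away the one property that makes the perturbation harmless: $q$ is bounded and vanishes on $[M,\infty)$ for some $M$.

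What works is to keep $q$ intact and use the covariance identity
\[
\overline{q\,T_k(\rho)}-\overline{q}\,\overline{T_k(\rho)}
=\overline{\big(q(\trhoeps)-q(\rho)\big)\big(T_k(\trhoeps)-T_k(\rho)\big)}
+\big(\overline{q}-q(\rho)\big)\big(T_k(\rho)-\overline{T_k(\rho)}\big).
\]
Set $F(a,b):=a\log a-b\log b-(1+\log b)(a-b)\ge\frac{|a-b|^2}{2\max\{a,b\}}$. Then $|q(a)-q(b)|\,|a-b|\le\Lambda F(a,b)$ for $a\ge0$, $b>0$, by three cases:
\begin{enumerate}
\item If $a,b\le 2M$, this is the Lipschitz bound.
\item If one of them is $<M$ and the other $>2M$, then $F(a,b)\ge|a-b|/4$, while the left-hand side is at most $\|q\|_{L^\infty}|a-b|$.
\item In the remaining cases both are $\ge M$, and the left-hand side vanishes.
\end{enumerate}
Since $T_k$ is $1$-Lipschitz, the first term is bounded by $\Lambda(\overline{\rho\log\rho}-\rho\log\rho)$ uniformly in $k$. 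On $\{\rho=0\}$ the density converges strongly, so nothing is lost there. The second term is bounded by $2\|q\|_{L^\infty}\big(T_k(\rho)-\overline{T_k(\rho)}\big)$, which tends to $0$ in $L^1(\OmegaT)$ as $k\to\infty$. Hence your displayed bound holds up to an error vanishing as $k\to\infty$, which is all Gronwall needs, and $D(\tau)\le C\int_0^\tau D(t)\,\dd t$ gives $D\equiv0$.

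Two minor points. First, the truncation $T_k$ is a device for low integrability, not for non-monotonicity. Second, since $\trhoeps$ is bounded in $L^2(\OmegaT)$ here, the oscillation defect bound is not needed. If you keep it, its derivation must also absorb the contribution of $q$, not just use $h'\ge c_h r^{\gamma-1}$.
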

    \begin{proof}
    For any $k\in\NN$ we introduce the cut-off functions 
    \begin{equation*}
        T_k(r) := k T\Big(\frac{r}{k}\Big) \qquad \text{for } r \in [0,\infty),
    \end{equation*}
    for some fixed concave function $T\in C^\infty([0,\infty))$ satisfying 
    \begin{equation*}
        T(r)=r \quad \forall\, r \in [0,1],
        \qquad
        T(r)=2 \quad \forall \, r \in [3,\infty).
    \end{equation*}
    Due to \eqref{Hom3d cont exts}, we have
    \begin{equation*}
        \partial_t{T_k(\trhoeps)}
        +
        \div\big(T_k(\trhoeps) \tuveceps \big) 
        +
        \big(T_k^\prime(\trhoeps)\trhoeps - T_k(\trhoeps)\big)\div \tuveceps 
        =
        0
        \qquad \text{in } \mathcal{D}^\prime(\RR^3_T),
    \end{equation*}
    see e.g.~\cite[Lemma~11.13]{FeireislNovotny2017singlim}.
    This implies in view of \Cref{lem:Hom3d bds exts} that, up to a subsequence,
    \begin{equation*}%\label{Cw Tk}
        T_k(\trhoeps) \to \overline{T_k} \qquad \text{in } \Cw([0,T];L^{\infty^-}(\Omega)),
    \end{equation*}
    for some $\overline{T_k}\in \Cw([0,T];L^{\infty^-}(\Omega))$.
    Moreover, we conclude from \Cref{lem:Hom3d bds exts} by using the Banach--Alaoglu theorem that, up to a subsequence,
    \begin{equation*}%\label{wk Tk}
        T_k(\trhoeps) \weakstar \overline{T_k} \quad \text{in } L^\infty(0,T;L^\infty(\Omega)),
        \quad 
        \big(T_k^\prime(\trhoeps)\trhoeps - T_k(\trhoeps)\big) \div\tuveceps 
        \weak
        f 
        \quad \text{in } L^2(\OmegaT),
    \end{equation*}
    for some $f \in L^2(\OmegaT)$.
    With these relations, \Cref{lem:Hom3d mom exts}, \Cref{lem:Hom3d weak lim}, and \eqref{thm:Hom3D restr gamma,alpha}, we can apply \cite[Proposition~A.1]{OschmannWendt2026-exist} with
    \begin{equation*}
        q>6,
        \quad 
        r=\frac{5\gamma-3}{3\gamma},
        \quad 
        s=\frac{2\gamma}{\gamma+2},
        \quad
        z=\frac{2\gamma}{\gamma+1},
        \quad 
        \sigma = 2,
        \quad 
        w = \frac{5\gamma-3}{3}
    \end{equation*}
    to deduce
    \begin{equation}\label{EVF in Proof}
        \begin{aligned}
        \lim\limits_{\eps \to 0}
            &\int_0^T \int_\Omega 
            \varphi T_k(\trhoeps) \bigg( p_\coup(\trhoeps) - \Big( \bulkvisc + \frac{4}{3}\shearvisc\Big) \div\tuveceps\bigg)
            \, \dd x \, \dd t
            \\
            &\quad =
            \int_0^T \int_\Omega 
            \varphi \overline{T_k} \bigg( \overline{p_\coup} - \Big( \bulkvisc + \frac{4}{3}\shearvisc \Big)\div \uvec \bigg) 
            \, \dd x\, \dd t
            \qquad \forall\, \varphi \in \mathcal{D}(\OmegaT).
        \end{aligned}
    \end{equation}
    With \eqref{EVF in Proof} at hand we can use the techniques from \cite{Feireisl2002} to deduce the strong convergence of the density.
    The detailed arguments for this conclusion have been given in the proof of \cite[Lemma~5.3]{OschmannWendt2026-exist}. For the sake of brevity we thus omit them here.
    \end{proof}
    To complete the proof of \Cref{thm:Hom}, we use the strong and weak convergences derived in this section to perform the homogenization limit in the energy inequality.
    \begin{proof}[Proof of \Cref{thm:Hom} in the case $d=3$]
        In view of \Cref{lem:Hom3d weak lim} and \Cref{lem:Hom3d str cv density}, we only have to show that the triplet $(\rho,\uvec,c)$ satisfies the energy inequality \eqref{wkSol energy} on the unperforated domain $\OmegaT$.
        To do so, we fix $\psi \in C^\infty_c([0,T))$ with $\psi\geq 0$.
        Then $(\rhoeps,\uveceps,\ceps)$ satisfies
        \begin{equation*}
            \begin{aligned}
                &-\int_0^T \partial_t \psi \int_{\Omegaeps}
                \frac{1}{2}\rhoeps|\uveceps|^2
                +
                W(\rhoeps)
                +
                \frac{\coup}{2}|\rhoeps-\ceps|^2  
                +
                \frac{\kappa}{2}|\nablax \ceps|^2
                \, \dd x \, \dd t
                \\
                &\quad +
                \int_0^T \psi \int_{\Omegaeps}
                \Svisc(\nablax\uveceps):\nablax\uveceps + \paracoup|\partial_t \ceps|^2
                \, \dd x \, \dd t
                \\
                &\leq 
                \psi(0)\int_{\Omegaeps}
                \frac{|(\rho\uvec)_{0\eps}|^2}{2\rho_{0\eps}}
                +
                W(\rho_{0\eps})
                +
                \frac{\coup}{2}|\rho_{0\eps}-c_{0\eps}|^2 
                +
                \frac{\kappa}{2}|\nablax c_{0\eps}|^2
                \, \dd x,
            \end{aligned}
        \end{equation*}
        which we can rewrite as
        \begin{equation}\label{Hom3d final energy rewritten}
        \begin{aligned}
            &-\int_0^T \partial_t \psi \int_\Omega 
            \frac{1}{2}\trhoeps|\tuveceps|^2
            + 
            W(\trhoeps)
            +
            \frac{\coup}{2}|\trhoeps-\tceps|^2 
            \, \dd x \, \dd t
            -
            \int_0^T \partial_t \psi \int_{\Omegaeps}
            \frac{\kappa}{2}|\nablax \ceps|^2 
            \, \dd x \, \dd t
            \\
            &\quad 
            +
            \int_0^T \psi\int_\Omega 
            \Svisc(\nablax\tuveceps):\nablax\tuveceps + \paracoup|\partial_t \tilde{c}_\eps|^2
            \, \dd x \, \dd t
            \\
            &\leq 
            \psi(0) \int_{\Omega}
            \frac{|\widetilde{(\rho\uvec)}_{0\eps}|^2}{2\tilde{\rho}_{0\eps}}
            +
            W(\tilde{\rho}_{0\eps})
            +
            \frac{\coup}{2} |\tilde{\rho}_{0\eps}-\tilde{c}_{0\eps}|^2
            +
            \frac{\kappa}{2}|\nablax\Eeps c_{0\eps}|^2
            \, \dd x 
            \\
            &\quad 
            -
            \psi(0)
            \int_{\Omega\setminus\Omegaeps}
            \frac{\kappa}{2}|\nablax \Eeps c_{0 \eps}|^2 
            \, \dd x.
        \end{aligned}
        \end{equation}
        By virtue of the convergences in \Cref{lem:Hom3d weak lim} and \Cref{lem:Hom3d str cv density}, we have 
        \begin{equation}\label{Hom3d final limit I}
        \begin{aligned}
            &\lim\limits_{\eps\to 0}
            \int_0^T \partial_t \psi \int_\Omega 
            \frac{1}{2}\trhoeps|\tuveceps|^2
            + 
            W(\trhoeps)
            +
            \frac{\coup}{2}|\trhoeps-\tceps|^2
            \, \dd x \, \dd t
            \\
            &=
            \int_0^T \partial_t \psi \int_\Omega 
            \frac{1}{2}
            \rho |\uvec|^2
            +
            W(\rho)
            +
            \frac{\coup}{2}|\rho-c|^2 
            \, \dd x \, \dd t,
        \end{aligned}
        \end{equation}
        and moreover, due to the weak lower semi-continuity of convex functionals,
        \begin{equation}\label{Hom3d final limit II}
        \begin{aligned}
            &\int_0^T \psi \int_\Omega 
            \Svisc(\nablax\uvec):\nablax \uvec + \paracoup|\partial_t c|^2
            \, \dd x \, \dd t
            \\
            &\leq 
            \liminf\limits_{\eps\to0}
            \int_0^T \psi \int_\Omega 
            \Svisc(\nablax\tuveceps):\nablax\tuveceps + \paracoup|\partial_t \tilde{c}_\eps|^2
            \, \dd x \, \dd t.
        \end{aligned}
        \end{equation}
        In view of the assumptions \eqref{thm:Hom IC} on the initial data, we have
        \begin{equation}\label{Hom3d final limit III}
        \begin{aligned}
            &\lim\limits_{\eps \to 0}
            \int_\Omega 
            \frac{|\widetilde{(\rho\uvec)}_{0\eps}|^2}{2\tilde{\rho}_{0\eps}}
            +
            W(\tilde \rho_{0\eps})
            +
            \frac{\coup}{2}|\tilde{\rho}_{0\eps}-\tilde{c}_{0\eps}|^2
            +
            \frac{\kappa}{2}|\nablax \Eeps c_{0\eps}|^2
            \, \dd x 
            \\
            &=
            \int_\Omega 
            \frac{|(\rho\uvec)_0|^2}{2\rho_0}
            +
            W(\rho_0)
            +
            \frac{\coup}{2}|\rho_0-c_0|^2
            +
            \frac{\kappa}{2}|\nablax c_0|^2
            \, \dd x,
        \end{aligned}
        \end{equation}
        and moreover, thanks to \eqref{thm:Hom IC} and $|\Omega\setminus\Omegaeps|\xrightarrow{\eps\to0} 0$,
        \begin{equation}\label{Hom3d final limit IV}
        \begin{aligned}
            \lim\limits_{\eps\to 0}
            \int_{\Omega\setminus\Omegaeps}
            \frac{\kappa}{2} |\nablax \Eeps c_{0\eps}|^2
            \, \dd x 
            = 
            0.
        \end{aligned}
        \end{equation}
        In order to pass to the limit $\eps \to 0$ in the second term on the left-hand side in \eqref{Hom3d final energy rewritten}, we have to exploit the parabolic equation which is satisfied by $(\rho,\uvec,c)$ and $(\rhoeps,\uveceps,\ceps)$ on the unperforated and perforated domain, respectively.
        By a density argument, we may choose $\varphi = c$ as a test function in \eqref{hom3d limit parab} and obtain
        \begin{equation}\label{Hom3d final parab tested}
            \int_0^T \int_\Omega 
            \kappa |\nablax c|^2 
            \, \dd x \, \dd t 
            =
            \int_0^T \int_\Omega 
            \paracoup c \partial_t c 
            -
            \coup |c|^2
            +
            \coup  c \rho
            \, \dd x \, \dd t.
        \end{equation}
        Similarly, we may choose $\varphi = \ceps$ as a test function in \eqref{wkSol parab} and obtain
        \begin{equation*}
        \begin{aligned}
            \int_0^T \int_{\Omegaeps} 
            \kappa |\nablax \ceps|^2 
            \, \dd x \, \dd t
            &=
            \int_0^T \int_{\Omegaeps}
            \paracoup\ceps \partial_t \ceps
            -
            \coup|\ceps|^2 
            +
            \coup\ceps\rhoeps
            \, \dd x \, \dd t
            \\
            &=
            \int_0^T \int_\Omega 
            \paracoup\tceps \partial_t \tilde{c}_\eps 
            -
            \coup |\tceps|^2 
            +
            \coup \tceps \trhoeps
            \, \dd x \, \dd t .
        \end{aligned}
        \end{equation*}
        With \eqref{Hom3d weak limits} and \eqref{Hom3d final parab tested}, passing to the limit $\eps \to 0$ leads to
        \begin{equation*}
            \lim\limits_{\eps \to 0}
            \int_0^T \int_{\Omegaeps}
            \kappa |\nablax \ceps|^2 
            \, \dd x \, \dd t
            =
            \int_0^T \int_\Omega 
            \paracoup c \partial_t c 
            -
            \coup|c|^2 
            +
            \coup c \rho
            \, \dd x \, \dd t
            =
            \int_0^T \int_\Omega 
            \kappa |\nablax c|^2 
            \, \dd x \, \dd t
        \end{equation*}
        and thus, in particular,
        \begin{equation}\label{Hom3d final norm grad c limit}
            \lim\limits_{\eps \to 0}
            \int_0^T \partial_t \psi \int_{\Omegaeps}
            \frac{\kappa}{2}
            |\nablax \ceps|^2 
            \, \dd x \, \dd t
            =
            \int_0^T \partial_t \psi \int_\Omega 
            \frac{\kappa}{2} |\nablax c|^2 
            \, \dd x \, \dd t.
        \end{equation}
        Combining \eqref{Hom3d final energy rewritten}--\eqref{Hom3d final norm grad c limit} leads to
        \begin{equation*}
        \begin{aligned}
            &-\int_0^T \partial_t \psi \int_\Omega 
            \frac{|\rho\uvec|^2}{2\rho}
            +
            W(\rho)
            +
            \frac{\coup}{2}|\rho-c|^2
            +
            \frac{\kappa}{2}|\nablax c|^2
            \, \dd x \, \dd t
            \\
            &\quad 
            +
            \int_0^T\psi\int_\Omega 
            \Svisc(\nablax\uvec):\nablax\uvec + \paracoup|\partial_t c|^2
            \, \dd x \, \dd t
            \\
            &\leq 
            \psi(0) \int_\Omega 
            \frac{|(\rho\uvec)_0|^2}{2\rho_0}
            +
            W(\rho_0)
            +
            \frac{\coup}{2}|\rho_0-c_0|^2
            +
            \frac{\kappa}{2}|\nablax c_0|^2
            \, \dd x,
        \end{aligned}
        \end{equation*}
        which is precisely the energy inequality \eqref{wkSol energy} for $(\rho,\uvec,c)$ on the unperforated domain $\OmegaT$.
    \end{proof}

\section{Homogenization in 2D}\label{sec:Hom2D}
The core of this section is the proof of Theorem~\ref{thm:Hom} in the case $d=2$.
Since the calculations are mostly done similarly, we will just focus on the differences to the 3D case.
Our main reference here will be the article \cite{NecasovaOschmann2023}, where homogenization of the 2D compressible evolutionary NSE has been investigated.
We will therefore use several technical tools proven in there as a black-box and give references to them whenever necessary.
In the 2D setting, the domain $\Omega$ is, for $\eps>0$, perforated by holes of the size
\begin{equation*}
    a_\eps = \exp(-\eps^{-\alpha}), 
    \qquad 
    \alpha>2,
\end{equation*}
see \eqref{def:domain}.
This scaling is connected to the fact that the harmonic (also called Newtonian) capacity of the holes in 2D is rather logarithmic than polynomial. Note also that this scaling is not unique: in fact, we can also consider holes of size $a_\eps = \eps^s \exp(-\eps^{-\alpha})$ for any $s \in \RR$; see \cite[Section~2]{Allaire1990a} for details. Finally, we will assume that the adiabatic exponent satisfies $\gamma > 2$, which is connected to the space-time dimension of the underlying domain, see \cite[Section~7]{OschmannPokorny2023} for an explanation on this.
\subsection{Uniform estimates}\label{subsec:UnifEsts2D}
Let us assume that the hypotheses and notations of Theorem~\ref{thm:Hom} with $d=2$ hold true.
By the same arguments leading to \eqref{Hom3D energybds} we derive from the energy inequality \eqref{wkSol energy} the uniform bound
\begin{equation}\label{Hom2D energybds I}
        \begin{aligned}
            &\|\sqrt{\rhoeps}\uveceps\|_{L^\infty(0,T;L^2(\Omegaeps))}
            +
            \|\rhoeps\|_{L^\infty(0,T;L^\gamma(\Omegaeps))}
            +\|c_\eps\|_{L^\infty(0,T;W^{1,2}(\Omegaeps))}
            \\
            &\quad 
            +
            \|\uveceps\|_{L^2(0,T;W^{1,2}(\Omegaeps))}
            +
            \|\partial_t\ceps\|_{L^2(0,T;L^2(\Omegaeps))}
            \leq C.
        \end{aligned}
    \end{equation}
To obtain a better uniform bound for the density $\rho_\eps$, we proceed as in the 3D case (see \Cref{sec:Hom3D}) and use a suitable Bogovski\u{\i} type operator with argument $\rho_\eps^\theta$ for some $\theta > 0$. In 2D, it turns out that such an operator needs to be constructed in a fine manner, and an additional parameter $\varpi_\eps \geq \eps^\frac{\alpha}{2} a_\eps^{-1}$ needs to be introduced to capture the interplay between the holes' distance $\eps$ and their exponentially small size $a_\eps$. 
Such an operator has been constructed in \cite[Appendix~A]{NecasovaOschmann2023}, the properties of which we state here as a lemma:
\begin{lemma}\label{lem:Bogeps 2D}
    For $\eps\in(0,1)$, let $\Omegaeps$ be defined as in \eqref{def:domain} with $d=2$, and let $s \in (1,\infty)$.
    Then there exists a bounded linear operator $\Bogeps\colon L^s_0(\Omegaeps) \to W^{1,s}_0(\Omegaeps;\RR^2)$ such that for any $f \in L_0^s(\Omega_\eps)$, we have
    \begin{align}\label{lem:Bogeps 2D non-ext}
        \div \Bogeps(f) = f \text{ a.e.~in } \Omega_\eps, \qquad \|\Bogeps(f)\|_{W^{1,s}_0(\Omegaeps)}^s \leq C(1+C_{\eps,s})\|f\|_{L^s(\Omegaeps)}^s,
    \end{align}
    where
    \begin{align*}
        C_{\eps,s} := \begin{cases}
            \eps^{-\alpha} a_\eps^{2-s} \big|\log(\eps^{\frac{\alpha}{2}}a_\eps^{-1})\big|^{-s} \big|\eps^{\frac{\alpha(2-s)}{2}}a_\eps^{s-2} - 1 \big| & \text{if } s \neq 2,\\
            \eps^{-\alpha} \big|\log(\eps^{\frac{\alpha}{2}}a_\eps^{-1})\big|^{-1} & \text{if } s=2.
        \end{cases}
    \end{align*}
    Moreover, for any $s\in (1,2]$, this operator can be extended to a linear operator
    \begin{align}\label{lem:Bogeps 2D ext}
    \mathcal{B}_\eps\colon \big[\dot{W}^{1,s^\prime}(\Omegaeps)\big]^\ast :=  \big\{ f \in \big[W^{1, s'}(\Omega_\eps)\big]^\ast : \langle f, 1\rangle=0 \big\} \to L^s(\Omega_\eps;\RR^3),
    \end{align}
    such that, for any $f\in \big[\dot{W}^{1,s^\prime}(\Omegaeps)\big]^\ast$,
    \begin{align*}
    -\int_{\Omegaeps}\Bogeps(f)\cdot \nablax \phi\, \dd x
    = \langle f,  \phi\rangle \quad \forall\,\phi\in W^{1,s'}(\Omega_\eps), 
    \quad
    \|\mathcal{B}_\eps (f) \|_{L^s(\Omega_\eps)} \leq C \|f\|_{[W^{1,s^\prime}(\Omega_\eps)]^\ast}.
    \end{align*}
\end{lemma}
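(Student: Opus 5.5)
The plan is to build $\Bogeps$ by the usual two-step restriction argument, combining the Bogovski\u{\i} operator $\Bog$ on the unperforated domain $\Omega$ with local operators near each hole.

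\textbf{Splitting the datum.} Let $f\in L^s_0(\Omegaeps)$ and let $\tilde f$ be its zero extension. First set $\mathbf{v}:=\Bog(\tilde f)\in W^{1,s}_0(\Omega;\RR^2)$. Then $\div\mathbf{v}=\tilde f$ and $\|\mathbf{v}\|_{W^{1,s}(\Omega)}\le C\|f\|_{L^s}$. This field does not vanish on the holes, so it has to be corrected.

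\textbf{Local cut-off near each hole.} Around each hole $B_{a_\eps}(\eps x_i)$ fix the annulus $A_i$ between radii $a_\eps$ and $r_\eps:=\eps^{\alpha/2}$. This choice is what produces the parameter $\varpi_\eps$ in the statement. Take the logarithmic cut-off
\[
\chi_i(x)=\frac{\log(|x-\eps x_i|/a_\eps)}{\log(r_\eps/a_\eps)}\quad\text{on }A_i ,
\]
extended by $0$ inside the hole and by $1$ outside $B_{r_\eps}$. On each ball $B_{r_\eps}(\eps x_i)$, replace $\mathbf{v}$ by $\chi_i\mathbf{v}$ and correct the divergence error $\nablax\chi_i\cdot\mathbf{v}-\fint_{A_i}\nablax\chi_i\cdot\mathbf{v}$ with the Bogovski\u{\i} operator on the annulus $A_i$. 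The mean-value mismatch is absorbed by a second annulus around it, at distance of order $\eps$, where $\mathbf{v}$ gives the needed flux. The annulus constant blows up like a power of the aspect ratio, so this step needs care; the cleaner route is to rescale each cell to unit size, where the standard cell construction applies.

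\textbf{Estimates.} Compute $\|\nablax\chi_i\|_{L^s(A_i)}^s$ explicitly. It equals $|\log(r_\eps/a_\eps)|^{-s}\int_{a_\eps}^{r_\eps}r^{1-s}\,\dd r$, which is, up to constants, $|\log(\eps^{\alpha/2}a_\eps^{-1})|^{-s}\,a_\eps^{2-s}\,\bigl|\eps^{\alpha(2-s)/2}a_\eps^{s-2}-1\bigr|$, with the logarithm replacing this when $s=2$. The remaining factor $\eps^{-\alpha}$ comes from converting the local size of $\mathbf{v}$ into $L^s$ norms through the Sobolev/Poincaré bound on $B_{r_\eps}$, together with $|K_\eps|\le C\eps^{-2}$. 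Summing over $i\in K_\eps$ then gives \eqref{lem:Bogeps 2D non-ext} with the constant $C_{\eps,s}$.

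\textbf{Main obstacle.} The hardest step is the extension \eqref{lem:Bogeps 2D ext} to $[\dot W^{1,s'}]^\ast$ with an $\eps$-uniform constant. I would argue by duality: define $\Bogeps(f)$ so that the stated identity holds. This requires the adjoint map $\phi\mapsto$ (the corresponding gradient-type operator) to be bounded $W^{1,s'}\to L^{s'}$ uniformly, which follows from the uniform extension property of \Cref{lem:extEeps} for $s'\ge2$, i.e.\ $s\le2$. That is exactly the restriction in the statement. In practice I would cite the construction of \cite[Appendix~A]{NecasovaOschmann2023} for the operator and verify only the capacity computation above.
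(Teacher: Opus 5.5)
\textbf{Gap: the extension step.} The gap is in the step you flag as the main obstacle, the extension \eqref{lem:Bogeps 2D ext}. This is the part the paper actually proves. The construction and \eqref{lem:Bogeps 2D non-ext} are taken from \cite[Appendix~A]{NecasovaOschmann2023} there as well, so citing them is fine. Your duality argument does not deliver the extension, for three reasons.
\begin{itemize}
\item \Cref{lem:extEeps} controls gradients only in $L^2$, so it says nothing for $s'>2$.
\item More fundamentally, no extension or Poincar\'e property of $\Omegaeps$ alone can give what is needed. The requirement is $\|\Bogeps(\div\gvec)\|_{L^s(\Omegaeps)}\le C\|\gvec\|_{L^s(\Omegaeps)}$ for every $\gvec\in L^s$ with $\div\gvec\in L^s$ and $\gvec\cdot\mathbf{n}=0$, and it must hold for the \emph{specific} linear operator $\Bogeps$ already built. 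That is how the bound is used in \eqref{deriv test fct Bogeps}, after commuting $\partial_t$ with $\Bogeps$. In dual form it says that $\Bogeps^\ast$ maps $L^{s'}$ into $\dot W^{1,s'}$ uniformly in $\eps$ (not $W^{1,s'}\to L^{s'}$). That is a derivative gain depending on how $\Bogeps$ is assembled, not on the domain. A uniform Poincar\'e inequality in $W^{1,s'}(\Omegaeps)$ plus Hahn--Banach only yields \emph{some} $\mathbf{w}\in L^s(\Omegaeps)$ with $-\int_{\Omegaeps}\mathbf{w}\cdot\nablax\phi\,\dd x=\langle f,\phi\rangle$. Such a $\mathbf{w}$ need not depend linearly on $f$ and has no reason to agree with $\Bogeps$ on $L^s_0(\Omegaeps)$.
\item Your explanation of the restriction $s\le 2$ is also incorrect. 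Uniform $W^{1,q}$-extensions across these holes exist for every $q$: subtract the mean on $B_{2a_\eps}\setminus B_{a_\eps}$ and reflect at scale $a_\eps$. So the threshold cannot come from an extension property.
\end{itemize}

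\textbf{What the paper does instead.} It estimates the three pieces of $\Bogeps(\div\gvec)$ directly. The key structure is $\Lepsi\mathbf{v}=\thetaepsi(\mathbf{v}-\fint_{\Bepsi}\mathbf{v})+\Xiepsi\fint_{\Bepsi}\mathbf{v}$: the fluctuation is cut off by the linear $\thetaepsi$ at scale $\varpi_\eps a_\eps$, and only the mean by the logarithmic $\Xiepsi$.
\begin{itemize}
\item $\Bog(\div\tilde{\gvec})$ is handled by the negative-norm bound on $\Omega$.
\item $\Lepsi\Bog(\div\tilde{\gvec})$ is handled by $L^\infty$-bounds on the cut-offs and H\"older for the mean.
\item For the annulus correction one uses $\div\Bog(\div\tilde{\gvec})=\div\tilde{\gvec}$ to write
\[
\div\Lepsi\Bog(\div\tilde{\gvec})=\div(\thetaepsi\tilde{\gvec})+\nablax\thetaepsi\cdot\big[\Bog(\div\tilde{\gvec})-\tilde{\gvec}\big]-\nablax(\thetaepsi-\Xiepsi)\cdot\fint_{\Bepsi}\Bog(\div\tilde{\gvec}).
\]
The first summand is treated with the negative-norm bound of $\Bogepsi$ on $\Aepsi$. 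The other two are treated with Poincar\'e on $\Aepsi$ (constant $\varpi_\eps a_\eps$) and $\|\nablax\thetaepsi\|_{L^\infty}\le C(\varpi_\eps a_\eps)^{-1}$.
\end{itemize}
The last summand produces the factor $(\varpi_\eps a_\eps)^{-\frac{2-s}{s}}\|\nablax\Xiepsi\|_{L^s}$, which stays bounded precisely for $s\le2$. That is the true source of the restriction.

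\textbf{Smaller issues in your local construction.}
\begin{itemize}
\item The divergence error of $\chi_i\mathbf{v}$ on $A_i$ is $\nablax\chi_i\cdot\mathbf{v}-(1-\chi_i)\tilde f$. It already has zero mean because $\tilde f=0$ in the hole, so no second annulus is needed.
\item The Bogovski\u{\i} constant on $A_i$ does not degenerate with the aspect ratio, since these annuli are John domains with $\eps$-independent constant.
\item Applying the logarithmic cut-off to all of $\mathbf{v}$, rather than only its mean, leaves the term $\nablax\chi_i\cdot(\mathbf{v}-\fint\mathbf{v})$. Poincar\'e at scale $r_\eps$ cannot absorb it, because $\|\nablax\chi_i\|_{L^\infty}\sim(a_\eps|\log\varpi_\eps|)^{-1}\gg r_\eps^{-1}$. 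This is exactly why $\Lepsi$ splits mean and fluctuation.
\end{itemize}
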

\begin{remark}
    For any $s \in (1,2]$, we have $C_{\eps,s}\leq C$ and thus,
    \begin{equation*}
        \Bogeps \colon L^s_0(\Omegaeps) \to W^{1,s}_0(\Omegaeps;\RR^2)
    \end{equation*}
    is uniformly bounded with respect to $\eps$.
\end{remark}
\begin{remark}
    The construction of the operator $\Bogeps\colon L^s_0(\Omegaeps) \to W^{1,s}_0(\Omegaeps;\RR^2)$ satisfying \eqref{lem:Bogeps 2D non-ext} has been shown in \cite[Appendix~A]{NecasovaOschmann2023}.
    The extension of the operator $\Bogeps$ in \eqref{lem:Bogeps 2D ext} is non-trivial and has been used for the analysis in \cite{NecasovaOschmann2023}, however, a proof for this extension was not given there. Thus, for the sake of completeness, we give a corresponding proof in the Appendix~\ref{App:Bog2D}.
\end{remark}
Having the operator $\Bogeps$ at hand, we obtain an improved estimate for the density analogous to \Cref{lem:impr prs Hom3D} in the 3D case.
\begin{lemma}\label{lem:impr prs Hom2D}
    Let the hypotheses and notations of \Cref{thm:Hom} with $d=2$ hold true.
    Then we have 
    \begin{equation}\label{impr prs Hom2D}
        \|\rhoeps\|_{L^{(2\gamma-1)^-}(\OmegaT)} + \|\Peff(\rhoeps)\|_{L^{\frac{(2\gamma-1)^-}{\gamma}}(\OmegaT)} 
        \leq 
        C.
    \end{equation}
\end{lemma}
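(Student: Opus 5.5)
We mimic the proof of \Cref{lem:impr prs Hom3D}. First we assume the additional regularity \eqref{add regularity}; at the end we remove it via the regularization from \cite[Section~7.9.5]{NovotnyStraskraba2004}. We test the momentum equation with $\psi\varphivec_\eps$, where $\psi\in C^\infty_c((0,T))$, $\psi\ge 0$, and
\begin{equation*}
\varphivec_\eps:=\Bogeps\Big(\rhoeps^\theta-\fint_{\Omegaeps}\rhoeps^\theta\Big).
\end{equation*}
Here $\Bogeps$ is the operator of \Cref{lem:Bogeps 2D} and $\theta\in(0,\gamma-1)$ is arbitrary, fixed, and eventually taken close to $\gamma-1$. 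This explains the exponent $(2\gamma-1)^-=\gamma+\theta$ with $\theta\uparrow\gamma-1$.

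As in 3D, the renormalized continuity equation gives the normal trace property \eqref{norm trace} and the formula \eqref{deriv test fct Bogeps} for $\partial_t\varphivec_\eps$. We then obtain the same identity $\int\psi\int p_\coup(\rhoeps)\rhoeps^\theta=\sum_{i=1}^6 I_i^\eps$. The growth conditions \eqref{dec prs reg}--\eqref{dec prs grwth} give $\int\psi\int\rhoeps^{\gamma+\theta}\le C+C\int\psi\int p_\coup(\rhoeps)\rhoeps^\theta$, so it remains to bound each $I_i^\eps$ by $C(1+\|\psi\|_\infty+\|\partial_t\psi\|_{L^1})$ plus a fraction of $\int\psi\int\rhoeps^{\gamma+\theta}$.

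The key difference from 3D is that we use only exponents $s\in(1,2]$, where the remark after \Cref{lem:Bogeps 2D} gives $\eps$-uniform bounds $\|\Bogeps f\|_{W^{1,s}_0}\le C\|f\|_{L^s}$. In 2D the embedding $W^{1,s}\hookrightarrow L^{2s/(2-s)}$ gives arbitrarily high integrability as $s\uparrow2$, which replaces the $\alpha$-dependent losses of \Cref{lem:Hom3D Bogeps}.
\begin{itemize}
\item $I_1^\eps$ is harmless because $\theta<\gamma-1$.
\item $I_2^\eps$ is bounded using $\rhoeps\uveceps\in L^\infty(0,T;L^{2\gamma/(\gamma+1)})$ and $\varphivec_\eps\in L^\infty(0,T;W^{1,s})$ with $s=\gamma/\theta>1$, at least when $\theta$ is close to $\gamma-1$ so that $s\le 2$. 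If $\gamma/\theta>2$, we use $s=2$ together with $\rhoeps^\theta\in L^\infty(L^2)$ instead.
\item For $I_3^\eps$, we bound $\|\nablax\varphivec_\eps\|_{L^2}\le C\|\rhoeps^\theta\|_{L^2}$. This is controlled by $C\|\rhoeps\|_{L^{\gamma+\theta}}^{\theta}$ (in the time-integrated sense) since $2\theta\le\gamma+\theta$, and then Young's inequality absorbs it.
\item For $I_4^\eps$, we take $\rhoeps|\uveceps|^2\in L^1(0,T;L^{r})$ for some $r>1$ (using $\uveceps\in L^2(L^q)$ for all $q<\infty$) and put $\nablax\varphivec_\eps$ in $L^{s}$ with $s=r'$. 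If $r'>2$, we again lower to $s\le2$ by interpolating the integrability of $\rhoeps^{\theta}$ against $\rhoeps^{\gamma+\theta}$; this is exactly where $\gamma+\theta<2\gamma-1$ and the restriction $\gamma>2$ enter.
\item For $I_5^\eps$, the divergence part is handled by the dual extension \eqref{lem:Bogeps 2D ext}: $\|\Bogeps(\div(\rhoeps^\theta\uveceps))\|_{L^s}\le C\|\rhoeps^\theta\uveceps\|_{L^s}$ for $s\le2$, and then H\"older's inequality with $\rhoeps\uveceps$. The second part uses the $W^{1,s}$ bound with $s\le2$ and the Sobolev embedding.
\item For $I_6^\eps$, we use $\rhoeps\nablax\ceps\in L^\infty(0,T;L^{2\gamma/(\gamma+2)})$ and $\varphivec_\eps\in L^\infty(0,T;L^q)$ for all large $q$, obtained from the $W^{1,s}$ bound with $s<2$ close to $2$ and $\rhoeps^\theta\in L^\infty(L^{\gamma/\theta})$.
\end{itemize}

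The main obstacle is the bookkeeping in $I_4^\eps$ and $I_5^\eps$: every Bogovski\u{\i} estimate must be performed with $s\le 2$, because for $s>2$ the constant $C_{\eps,s}$ blows up exponentially in $\eps^{-\alpha}$. The exponents then have to be chosen so that these terms are controlled by $\|\rhoeps\|_{L^{\gamma+\theta}}$ to a power strictly less than $\gamma+\theta$. This appears possible only for $\theta<\gamma-1$ strictly, which is the reason for the $(\cdot)^-$ in \eqref{impr prs Hom2D}. Once the estimate for $\int\psi\int\rhoeps^{\gamma+\theta}$ is obtained, we let $\psi\to 1$. The bound on $p_\coup(\rhoeps)$ then follows from the growth conditions on $p$.
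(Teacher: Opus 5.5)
Your proposal is correct and is essentially the paper's proof. It uses the same test function $\Bogeps\big(\rhoeps^\theta-\fint_{\Omegaeps}\rhoeps^\theta\big)$ with $\theta\in(0,\gamma-1)$, the same restriction to exponents $s\le 2$ (where \Cref{lem:Bogeps 2D} is $\eps$-uniform), and the same treatment of the new term $I_6^\eps$; the paper simply defers $I_1^\eps$--$I_5^\eps$ to \cite[Appendix~B]{Bravin2024}.

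Two bookkeeping fixes are needed. In $I_6^\eps$ you cannot take $s$ close to $2$ when $\theta$ is near $\gamma-1$, because $\rhoeps^\theta\in L^\infty(0,T;L^{\gamma/\theta}(\Omegaeps))$ forces $s\le\gamma/\theta\approx\gamma/(\gamma-1)$; however, you only need $q=\frac{2\gamma}{\gamma-2}$, which the paper's choice $s=\frac{\gamma}{\gamma-1}<2$ delivers via $W^{1,\frac{\gamma}{\gamma-1}}(\Omega)\hookrightarrow L^{\frac{2\gamma}{\gamma-2}}(\Omega)$, admissible since $\theta\le\gamma-1$. In $I_4^\eps$ no interpolation is needed: choose $r\in[2,\gamma)$ close to $\gamma$ so that $r'\le\min\{2,\gamma/\theta\}$, which is possible precisely because $\gamma>2$ and $\theta<\gamma-1$.
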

\begin{proof}
    We fix $\theta \in (0,\gamma-1)$ and $\psi \in C^\infty_c((0,T))$ with $\psi \geq 0$.
    As in the proof of \Cref{impr prs Hom3D} we may assume without loss of generality that
    \begin{equation*}
        \rhoeps^\theta,\,\,\partial_t(\rhoeps^\theta) \in L^\infty(0,T;L^\infty(\Omegaeps)),
    \end{equation*}
    and set
    \begin{equation*}
        \varphivec_\eps := \Bogeps \left(\rhoeps^\theta - \fint_{\Omegaeps} \rhoeps^\theta \right),
    \end{equation*}
    where $\Bogeps$ denotes the Bogovski\u{\i} operator from \Cref{lem:Bogeps 2D}.
    Proceeding as in the proof of \Cref{lem:impr prs Hom3D}, we verify \eqref{norm trace}, \eqref{deriv test fct Bogeps}, and
    \begin{equation*}
        \begin{aligned}
            \int_0^T \psi \int_{\Omegaeps} \Peff(\rhoeps)\rhoeps^\theta \, \dd x \, \dd t
            =:
            \sum\limits_{i=1}^6 I_i^\eps,
        \end{aligned}
    \end{equation*}
    where $I_i^\eps$ are defined as in the proof of \Cref{lem:impr prs Hom3D}.
    By virtue of \eqref{thm:Hom2D restr gamma,alpha}, \eqref{Hom2D energybds I}, and \Cref{lem:Bogeps 2D}, we have
    \begin{equation*}
        \begin{aligned}
            \sum\limits_{i=1}^5 |I_i^\eps| 
            \leq 
            C(1 + \|\psi\|_{L^\infty((0,T))}
            +
            \|\partial_t \psi\|_{L^1((0,T))}
            )
            +
            \frac{3}{4} \int_0^T \psi \int_{\Omegaeps}
            \rhoeps^{\gamma + \theta}
            \, \dd x \, \dd t.
        \end{aligned}
    \end{equation*}
    A detailed exposition of the corresponding estimates can be found in \cite[Appendix B]{Bravin2024}, therefore we omit the details here.
    For the last term $I_6^\eps$, we estimate by the same token using the Sobolev embedding $W^{1,\frac{\gamma}{\gamma-1}}(\Omega)\hookrightarrow L^{\frac{2\gamma}{\gamma-2}}(\Omega)$, which holds thanks to $\gamma>2$ and $d=2$,
    \begin{equation*}
        \begin{aligned}
            |I_6^\eps|
            &\leq 
            C\|\psi\|_{L^\infty((0,T))}\|\rhoeps\|_{L^\infty(0,T;L^{\gamma}(\Omegaeps))} \|\nablax\ceps\|_{L^\infty(0,T;L^{2}(\Omegaeps))} \|\varphivec_\eps\|_{L^\infty(0,T;L^{\frac{2\gamma}{\gamma-2}}(\Omegaeps))}
            \\
            &\leq 
            C\|\psi\|_{L^\infty((0,T))}\|\varphivec_\eps\|_{L^\infty(0,T;W^{1,\frac{\gamma}{\gamma-1}}(\Omegaeps))}
            %\\
            %&
            \leq 
            C\|\psi\|_{L^\infty((0,T))} \|\rhoeps \|_{L^\infty(0,T;L^{\frac{\theta\gamma}{\gamma-1}}(\Omegaeps))}^\theta
            \\
            &\leq 
            C\|\psi\|_{L^\infty((0,T))}.
        \end{aligned}
    \end{equation*}
    In the third inequality we have used that
    \begin{equation*}
        \frac{\gamma}{\gamma-1}<2,
    \end{equation*}
    which holds thanks to $\gamma >2$, and in the last line we have used the fact $\theta \in (0,\gamma-1)$, which implies
    \begin{equation*}
        \frac{\theta\gamma}{\gamma-1} < \gamma.
    \end{equation*}
    Using that \eqref{dec prs reg}--\eqref{dec prs grwth}, we arrive at
    \begin{equation*}
        \int_0^T \psi \int_{\Omegaeps} \rhoeps^{\gamma+\theta} \, \dd x \, \dd t 
        \leq 
        C\big(1 + \|\psi\|_{L^\infty((0,T))} + \|\partial_t \psi\|_{L^1((0,T))}\big).
    \end{equation*}
    From this relation, we conclude \eqref{impr prs Hom2D} by using the same arguments as in the proof of \Cref{lem:impr prs Hom3D}.
\end{proof}
\subsection{Extensions of functions}\label{subsec:ExtOfFcts2D}
Analogously to the 3D case, we extend a weak solution $(\rhoeps,\uveceps,\ceps)$ defined on the porous domain $(0,T)\times\Omegaeps$ to the unperforated domain $\OmegaT$ by using the zero extension for the density $\rhoeps$ and the velocity $\uveceps$, and the extension operator $\Eeps$ from \Cref{lem:extEeps} for the relaxation parameter $\ceps$.
The uniform bounds in \eqref{Hom2D energybds I} and \Cref{lem:impr prs Hom2D} imply the following uniform bounds for the extended quantities $\trhoeps,\tuveceps,\tceps,\Eeps\ceps$.
\begin{lemma}\label{lem:Hom2d bds exts}
    Let the hypotheses and notations of \Cref{thm:Hom} with $d=2$ hold true.
    Then we have 
    \begin{equation}\label{Hom2d bds exts I}
        \begin{aligned}
            &\|\sqrt{\trhoeps}\tuveceps\|_{L^\infty(0,T;L^2(\Omega))}
            +
            \|\trhoeps\|_{L^\infty(0,T;L^\gamma(\Omega))}
            +
            \|\Eeps\ceps\|_{L^\infty(0,T;W^{1,2}(\Omega))}
            +
            \|\tceps\|_{L^\infty(0,T;L^2(\Omega))}
            \\
            &\quad +
            \|\tuveceps\|_{L^2(0,T;W^{1,2}(\Omega))}
            +
            \|\partial_t \Eeps\ceps\|_{L^2(\OmegaT)}
            +
            \|\partial_t\tceps\|_{L^2(\OmegaT)}
            +
            \|\trhoeps\|_{L^{(2\gamma-1)^-}(\OmegaT)}
            \\
            &\quad +
            \|\Peff(\trhoeps)\|_{L^{\frac{(2\gamma-1)^-}{\gamma}}(\OmegaT)}
            \leq 
            C.
        \end{aligned}
    \end{equation}
    Moreover, we have
    \begin{equation}\label{Hom2d bds exts II}
        \begin{aligned}
            &\|\Eeps\ceps-\tceps\|_{L^\infty(0,T;L^2(\Omega))}
            =
            \|\Eeps\ceps\|_{L^\infty(0,T;L^{2}(\Omega\setminus \Omegaeps))}
            \leq 
            C\eps^{-\frac{1}{2}}\exp\Big(-\frac{1}{4\eps^\alpha}\Big),
            \\
            &\|\nablax\Eeps\ceps\|_{L^\infty(0,T;L^1(\Omega\setminus\Omegaeps))}
            \leq 
            C \eps^{-1}\exp\Big(-\frac{1}{2\eps^{\alpha}} \Big).
        \end{aligned}
    \end{equation}
\end{lemma}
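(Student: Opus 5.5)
The plan is to follow the proof of \Cref{lem:Hom3d bds exts} step by step. The only real change is the measure of the holes in 2D, together with the fact that $W^{1,2}(\Omega)$ no longer embeds into $L^6(\Omega)$ but into every $L^q(\Omega)$ with $q<\infty$.

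\emph{The bounds in \eqref{Hom2d bds exts I}.} For the zero extensions $\trhoeps$, $\tuveceps$ and $\tceps$, all the norms on $\Omega$ coincide with the corresponding norms on $\Omegaeps$. For $\tuveceps$ I will use that $\uveceps\in W^{1,2}_0(\Omegaeps)$, so that $\nablax\tuveceps=\widetilde{\nablax\uveceps}$. Hence these terms are controlled directly by \eqref{Hom2D energybds I} and \Cref{lem:impr prs Hom2D}. Likewise $\partial_t\tceps=\widetilde{\partial_t\ceps}$.

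For $\Eeps\ceps$ I will use \Cref{lem:extEeps}. With $s=2$ it gives $\|\Eeps\ceps(t)\|_{W^{1,2}(\Omega)}\le C\|\ceps(t)\|_{W^{1,2}(\Omegaeps)}$. Moreover, $\Eeps$ is linear, bounded and independent of $t$, so it commutes with $\partial_t$: $\partial_t\Eeps\ceps=\Eeps\partial_t\ceps$. The $L^2$-bound of \Cref{lem:extEeps} then gives $\|\partial_t\Eeps\ceps\|_{L^2(\OmegaT)}\le C\|\partial_t\ceps\|_{L^2((0,T)\times\Omegaeps)}$. If one wants to be careful here, this can be verified via difference quotients in time.

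\emph{The bounds in \eqref{Hom2d bds exts II}.} The key quantity is the measure of the holes. By \eqref{porous dom number density} there are $|K_\eps|\le C\eps^{-2}$ holes, each of area $\pi a_\eps^2$, so
\begin{equation*}
|\Omega\setminus\Omegaeps|\le C\eps^{-2}\exp\big(-2\eps^{-\alpha}\big).
\end{equation*}
The identity $\Eeps\ceps-\tceps=\Eeps\ceps\,\mathbf 1_{\Omega\setminus\Omegaeps}$ holds because $\Eeps\ceps=\ceps$ on $\Omegaeps$.

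For the first estimate I apply H\"older's inequality with exponents $4$ and $4$, followed by the 2D Sobolev embedding $W^{1,2}(\Omega)\hookrightarrow L^4(\Omega)$ and \eqref{Hom2d bds exts I}:
\begin{equation*}
\|\Eeps\ceps\|_{L^2(\Omega\setminus\Omegaeps)}\le \|\Eeps\ceps\|_{L^4(\Omega)}|\Omega\setminus\Omegaeps|^{\frac14}\le C\eps^{-\frac12}\exp\Big(-\frac{1}{2\eps^\alpha}\Big)\le C\eps^{-\frac12}\exp\Big(-\frac{1}{4\eps^\alpha}\Big).
\end{equation*}
For the gradient estimate I use Cauchy--Schwarz:
\begin{equation*}
\|\nablax\Eeps\ceps\|_{L^1(\Omega\setminus\Omegaeps)}\le\|\nablax\Eeps\ceps\|_{L^2(\Omega)}|\Omega\setminus\Omegaeps|^{\frac12}\le C\eps^{-1}\exp(-\eps^{-\alpha}),
\end{equation*}
which is stronger than the stated bound. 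Both estimates hold uniformly for a.e.\ $t$, so taking the supremum over $t$ gives the $L^\infty$-in-time bounds.

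\emph{Main point to check.} There is no genuine obstacle here. The only step needing care is that the embedding $W^{1,2}(\Omega)\hookrightarrow L^4(\Omega)$ is used with a constant independent of $\eps$. This holds because the embedding is applied on the fixed domain $\Omega$, after the uniform extension from \Cref{lem:extEeps} has been used.
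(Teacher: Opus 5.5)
Your proof is correct and follows the paper's argument: \eqref{Hom2d bds exts I} comes from the uniform estimates on $\Omegaeps$ together with \Cref{lem:extEeps}, and \eqref{Hom2d bds exts II} comes from H\"older's inequality, the embedding $W^{1,2}(\Omega)\hookrightarrow L^4(\Omega)$, and the smallness of $|\Omega\setminus\Omegaeps|$. Your hole-measure bound $|\Omega\setminus\Omegaeps|\le C\eps^{-2}\exp(-2\eps^{-\alpha})$ is sharper than what the paper's exponents reflect, so your estimates imply the stated ones, and your difference-quotient remark supplies the detail on $\partial_t\Eeps\ceps$ that the paper leaves implicit.
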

\begin{proof}
    The estimate \eqref{Hom2d bds exts I} follows from \eqref{Hom2D energybds I} and \Cref{lem:impr prs Hom2D} by using the properties of the extension operator in \Cref{lem:extEeps}.
    With \eqref{Hom2D energybds I} and \eqref{porous dom number density}, we estimate further by using the Sobolev embedding $W^{1,2}(\Omega) \hookrightarrow L^4(\Omega)$
    \begin{equation*}
    \begin{aligned}
        &\|\Eeps\ceps\|_{L^\infty(0,T;L^2(\Omega\setminus\Omegaeps))}
        \leq 
        C\|\Eeps\ceps\|_{L^\infty(0,T;L^4(\Omega))}|\Omega\setminus\Omegaeps|^{^{\frac{1}{4}}}
        \leq 
        C\eps^{-\frac{1}{2}}\exp\Big(-\frac{1}{4\eps^{\alpha}}\Big),
        \\
        &\|\nablax\Eeps\ceps\|_{L^\infty(0,T;L^1(\Omega\setminus\Omegaeps))}
        \leq 
        C\|\nablax\Eeps\ceps\|_{L^\infty(0,T;L^2(\Omega))}|\Omega\setminus\Omegaeps|^{\frac{1}{2}} 
        \leq 
        C \eps^{-1} \exp \Big(-\frac{1}{2\eps^\alpha} \Big).
    \end{aligned}
    \end{equation*}
\end{proof}
As in the 3D case, we immediately verify that the extended quantities $\trhoeps,\tuveceps,\tceps,\Eeps\ceps$ fulfill
 \begin{align}
            &\int_0^T \int_\Omega 
            \trhoeps \partial_t\varphi + \trhoeps\tuveceps \cdot \nablax \varphi 
            \, \dd x \, \dd t = 0,\label{Hom2d cont exts}
            \\
            &\int_0^T \int_\Omega 
            \paracoup\tilde{c}_\eps \partial_t \varphi - \kappa \nablax \Eeps\ceps \cdot \nablax \varphi 
            -
            \coup (\tilde{c}_{\eps} - \trhoeps)\varphi 
            \, \dd x \, \dd t
            = \langle \mathcal{F}_{\eps},  \varphi\rangle,\label{Hom2d parab exts}
    \end{align}
    for any $\varphi \in \mathcal{D}(\RR^3_T)$, where
    \begin{equation*}
        \langle\mathcal{F}_\eps,\varphi\rangle
        :=
        -
        \int_0^T \int_{\Omega\setminus\Omegaeps} 
        \kappa \nablax \Eeps\ceps \cdot \nablax \varphi 
        \, \dd x \, \dd t
        \xrightarrow{\eps \to 0} 0.
    \end{equation*}
Again, as in the 3D framework, we have to construct appropriate cut-off functions $\Phi_\eps$ in order to derive a momentum equation for the extended quantities on the unperforated domain.
Such suitable cut-off functions have been constructed in \cite{Bravin2024,NecasovaOschmann2023}.
More precisely, we have:
\begin{lemma}[see {\cite[Lemma~4.1]{NecasovaOschmann2023}}]\label{lem:Phi2D}
Let $\Omega\subseteq \RR^2$ be a bounded domain with smooth boundary.
For $\eps \in (0,1)$, let $\Omegaeps$ be defined by \eqref{def:domain} with $d=2$.
Then there is a solenoidal matrix-valued function $\Phi_\eps$ such that
\begin{align*}
\Phi_\eps \in W^{1,\infty^-}(\Omega; \RR^{2\times 2})\cap L^\infty(\Omega; \RR^{2\times 2}),
\quad
\Phi_\eps =0 \text{ on } \Omega\setminus \Omega_\eps,
\quad 
\Phi_\eps =\mathbb{I} \text{ on } \Omega\setminus \bigcup_{i \in K_\eps} B_{\eps^{\frac{\alpha}{2}}}(\eps x_i).
\end{align*}
Moreover, $\|\Phi_\eps\|_{L^\infty(\Omega)}\leq C$, and for any $1\leq s< \infty$,
\begin{align*}
\|\Phi_\eps-\mathbb{I}\|_{L^s(\Omega)} &\leq C \eps^{\frac{\alpha-2}{s}}
&&\|\nabla\Phi_\eps\|_{L^2(\Omega)} \leq C \eps^{-1} \big|\log (\eps^{\frac{\alpha}{2}}a_\eps^{-1})\big|^{-\frac12}.
\end{align*}
In turn, for any $\varphivec \in C_c^\infty(\Omega;\RR^2)$ and any $s \leq 2$,
\begin{align*}
\|\nabla (\Phi_\eps \varphivec)-\Phi_\eps \nabla\varphivec\|_{L^s(\Omega)} &\leq C \eps^{-1} \big|\log (\eps^{\frac{\alpha}{2}}a_\eps^{-1})\big|^{-\frac12} \|\varphivec\|_{L^\frac{2s}{2-s}(\Omega)}
\end{align*}
with the convention $1/0 := \infty$.
\end{lemma}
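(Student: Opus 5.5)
We construct $\Phi_\eps$ hole by hole, using a two-dimensional logarithmic cut-off and then correcting its divergence, following the scheme of \cite{Bravin2024,NecasovaOschmann2023}.

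\textbf{Scalar cut-off.} Set $b_\eps:=\eps^{\frac{\alpha}{2}}$. Since $\alpha>2$, we have $a_\eps\ll b_\eps\ll\eps$ for $\eps$ small. For each $i\in K_\eps$ define the capacitary profile
\begin{equation*}
\phi_i^\eps(x):=\frac{\log(|x-\eps x_i|/a_\eps)}{\log(b_\eps/a_\eps)}
\end{equation*}
on the annulus $A_i^\eps:=B_{b_\eps}(\eps x_i)\setminus B_{a_\eps}(\eps x_i)$. Extend it by $0$ inside the hole and by $1$ outside $B_{b_\eps}(\eps x_i)$.

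A direct computation gives
\begin{equation*}
\int_{A_i^\eps}|\nabla\phi_i^\eps|^2\,\dd x = 2\pi\,\big|\log(b_\eps a_\eps^{-1})\big|^{-1}.
\end{equation*}
Summing over the at most $C\eps^{-2}$ holes yields
\begin{equation*}
\|\nabla\phi_\eps\|_{L^2}^2\le C\eps^{-2}\big|\log(\eps^{\frac{\alpha}{2}}a_\eps^{-1})\big|^{-1},
\end{equation*}
which is the claimed gradient bound. We also have $0\le\phi_\eps\le1$, so $\|\phi_\eps\|_{L^\infty}\le C$. Moreover $\phi_\eps-1$ is supported in $\bigcup_i B_{b_\eps}(\eps x_i)$, a set of measure at most $C\eps^{-2}b_\eps^2=C\eps^{\alpha-2}$. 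This gives $\|\phi_\eps-1\|_{L^s}\le C\eps^{\frac{\alpha-2}{s}}$.

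\textbf{Solenoidal correction.} The matrix $\phi_\eps\mathbb{I}$ is not divergence free. On each annulus $A_i^\eps$ we correct it columnwise by $-\mathcal{B}_{A_i^\eps}(\nabla\phi_i^\eps)$, where $\mathcal{B}_{A}$ is the Bogovski\u{\i} operator on the annulus.

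Each component of $\nabla\phi_i^\eps$ has zero mean on the annulus, since $\phi_i^\eps$ is constant on both boundary circles and the divergence theorem applies. Hence the correction is well defined and vanishes on $\partial A_i^\eps$.

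The main obstacle is that the annulus degenerates as $a_\eps/b_\eps\to0$. A Bogovski\u{\i} operator on a thin-hole annulus is not uniformly bounded in $W^{1,2}$, and we need its constant not to destroy the logarithmic gain.

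To handle this, we exploit the radial structure: $\nabla\phi_i^\eps$ is an exact radial field. We split it by a dyadic decomposition $A_i^\eps=\bigcup_k\{2^{k}a_\eps<|x-\eps x_i|<2^{k+1}a_\eps\}$. On each dyadic ring, rescaled to unit size, the Bogovski\u{\i} constant is uniform. The mean-zero condition is arranged by a telescoping partition of unity, as in \cite[Appendix~A]{NecasovaOschmann2023}.

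This gives
\begin{equation*}
\|\nabla\mathcal{B}(\nabla\phi_i^\eps)\|_{L^2}\le C\|\nabla\phi_i^\eps\|_{L^2},
\qquad
\|\mathcal{B}(\nabla\phi_i^\eps)\|_{L^\infty}\le C.
\end{equation*}
The $L^\infty$ bound uses that $|\nabla\phi_i^\eps|\sim\big(|x|\log(b_\eps/a_\eps)\big)^{-1}$, so the integral on each ring is bounded. Setting $\Phi_\eps:=\phi_\eps\mathbb{I}-\sum_i\mathcal{B}_{A_i^\eps}(\nabla\phi_i^\eps)$ then gives all the stated bounds. In particular, $\Phi_\eps$ is solenoidal, vanishes in the holes, equals $\mathbb{I}$ outside $\bigcup_i B_{b_\eps}(\eps x_i)$, and lies in $W^{1,\infty^-}$ for fixed $\eps$ by elliptic regularity on the rings.

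\textbf{Commutator estimate.} We have $\nabla(\Phi_\eps\varphivec)-\Phi_\eps\nabla\varphivec=(\nabla\Phi_\eps)\varphivec$. Hölder's inequality with exponents $\frac12+\frac{2-s}{2s}=\frac1s$ gives
\begin{equation*}
\|(\nabla\Phi_\eps)\varphivec\|_{L^s}\le\|\nabla\Phi_\eps\|_{L^2}\,\|\varphivec\|_{L^{\frac{2s}{2-s}}}.
\end{equation*}
This is the claimed estimate.
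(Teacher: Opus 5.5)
The paper does not prove this lemma; it is quoted from \cite[Lemma~4.1]{NecasovaOschmann2023}, so I assess your argument on its own. Your strategy is sound: the logarithmic capacity profile (your $\phi_i^\eps$ is exactly $1-\Xi_i^\eps$ from the appendix) plus a columnwise divergence correction supported in the annuli. The scalar computations, the measure estimate and the H\"older step for the commutator are correct. However, you have misplaced the difficulty, and the one estimate that is genuinely delicate is asserted rather than proved.

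The Bogovski\u{\i} operator on $A_i^\eps=B_{b_\eps}(\eps x_i)\setminus B_{a_\eps}(\eps x_i)$ \emph{is} uniformly bounded from $L^q_0$ to $W^{1,q}_0$ for every $q\in(1,\infty)$. These annuli are John domains with an $\eps$-independent constant, which is exactly what \Cref{lem:Bogepsi(Appendix)} uses. So the $W^{1,2}$ bound needs no dyadic decomposition. What does not follow is $\|\Phi_\eps\|_{L^\infty(\Omega)}\le C$, and your bound for $\|\Phi_\eps-\mathbb{I}\|_{L^s}$ depends on it, since you only estimated $\phi_\eps-1$. In 2D, $W^{1,2}$ does not embed into $L^\infty$. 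For $q>2$ one has $\|\nablax\phi_i^\eps\|^q_{L^q(A_i^\eps)}\sim a_\eps^{2-q}|\log(b_\eps/a_\eps)|^{-q}\to\infty$. So a Bogovski\u{\i} operator on the whole annulus, which is what your final formula with $\mathcal{B}_{A_i^\eps}$ literally uses, gives no uniform $L^\infty$ control. The phrase ``the integral on each ring is bounded'' does not supply it.

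The dyadic idea does close this gap once stated precisely:
\begin{itemize}
\item Since $\partial_j\phi_i^\eps=\phi'(r)\,(x-\eps x_i)_j/r$ is odd in $(x-\eps x_i)_j$, it has zero mean on every ring $R_k=\{2^k a_\eps<|x-\eps x_i|<2^{k+1}a_\eps\}$. No telescoping partition of unity is needed; take the outermost ring with ratio in $[2,4)$.
\item Rescale $R_k$ to $B_2\setminus B_1$. The data is then bounded pointwise by $|\log(b_\eps/a_\eps)|^{-1}$.
\item The Bogovski\u{\i} operator of this fixed ring into $W^{1,q}_0$, for some $q>2$, together with Sobolev's embedding gives $\|\mathbf{w}_k\|_{L^\infty}\le C|\log(b_\eps/a_\eps)|^{-1}$ uniformly in $k$.
\item Scale invariance gives $\|\nablax\mathbf{w}_k\|_{L^2}\le C\|\nablax\phi_i^\eps\|_{L^2(R_k)}$.
\item The decisive point you did not state is that the $\mathbf{w}_k$ have disjoint supports. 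Hence their $L^\infty$ norms do not add up over the roughly $\log(b_\eps/a_\eps)$ rings.
\item The zero extensions glue to $\mathbf{w}\in W^{1,q}_0(A_i^\eps)$ for all $q<\infty$ at fixed $\eps$, because the data is bounded. This is the Bogovski\u{\i} $L^q$ theory, not elliptic regularity.
\end{itemize}

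Alternatively, in 2D you can bypass Bogovski\u{\i} altogether. Take a smoothed profile $\phi=\chi\big(\log(r/a_\eps)/\log(b_\eps/a_\eps)\big)$ with $\chi$ smooth and constant near $0$ and $1$. Near each hole put $\Phi_\eps e_1=\nabla^\perp\big(-\phi\,(x-\eps x_i)_2\big)$ and $\Phi_\eps e_2=\nabla^\perp\big(\phi\,(x-\eps x_i)_1\big)$. Each column is then divergence free, $\Phi_\eps$ vanishes near the hole and equals $\mathbb{I}$ near $|x-\eps x_i|=b_\eps$. Moreover $|\Phi_\eps-\phi\mathbb{I}|\le r|\phi'|\le C|\log(b_\eps/a_\eps)|^{-1}$ and $|\nablax\Phi_\eps|\le C(|\phi'|+r|\phi''|)\le C\big(r\log(b_\eps/a_\eps)\big)^{-1}$. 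This yields all the stated bounds directly, even with $\Phi_\eps\in W^{1,\infty}$.
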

With the cut-off function $\Phi_\eps$ from \Cref{lem:Phi2D}, we derive the following version of the momentum equation for the extended quantities $\trhoeps,\tuveceps,\tceps,\Eeps\ceps$ on the unperforated domain $\OmegaT$:
\begin{lemma}\label{lem:Hom2d mom exts}
    Let the hypotheses and notations of \Cref{thm:Hom} with $d=2$ hold true and let $\Phi_\eps$ denote the cut-off function from \Cref{lem:Phi2D}.
    Then we have for any $\varphivec\in \mathcal{D}(\OmegaT;\RR^2)$ that
    \begin{equation}\label{Hom2d mom exts}
        \begin{aligned}
            &\int_0^T \int_\Omega 
            %\left(
            \Phi_\eps^\mathrm{T} \trhoeps\tuveceps \cdot \partial_t \varphivec 
            + 
            \Phi_\eps^\mathrm{T}\trhoeps\tuveceps \otimes \tuveceps : \nablax \varphivec
            +
            \Peff(\trhoeps) \div\varphivec
            %\right)
            \, \dd x \, \dd t
            \\
            &\quad -
            \int_0^T \int_\Omega 
            %\left(
            \Svisc(\nablax\tuveceps):\nablax\varphivec
            -
            \coup \trhoeps \nablax \Eeps \ceps \cdot \varphivec 
            %\right)
            \, \dd x \, \dd t
            =
            \langle \mathcal{\mathcal{G}_\eps,\varphivec \rangle },
        \end{aligned}
    \end{equation}
    where
    \begin{equation*}
        \begin{aligned}
            \langle \mathcal{G_\eps,\varphivec\rangle }
            &:=
            \int_0^T \int_\Omega 
            %\Big(
            \trhoeps\tuveceps\otimes \tuveceps : \big(\Phi_\eps\nablax\varphivec - \nablax (\Phi_\eps\varphivec)\big)
            +
            \Peff(\trhoeps)(\mathbb{I}-\Phi_\eps) : \nablax \varphivec
            %\Big)            
            \, \dd x \, \dd t 
            \\
            &\quad +
            \int_0^T \int_\Omega \Svisc(\nablax\tuveceps) : \Big( \big( \Phi_\eps-\mathbb{I}\big) \nablax \varphivec + \big( \nablax (\Phi_\eps\varphivec) - \Phi_\eps \nablax \varphivec\big)\Big)
            \, \dd x \, \dd t
            \\
            &\quad +
            \int_0^T \int_\Omega 
            \coup \trhoeps \nablax \Eeps\ceps \cdot\big( \mathbb{I} - \Phi_\eps \big) \cdot \varphivec
            \, \dd x \, \dd t.
        \end{aligned}
    \end{equation*}
    Furthermore, we find for any $r \in \left(\frac{2\gamma-1}{\gamma-1},\infty\right)$ some $\delta \in (0,\infty)$ such that
    \begin{equation}\label{Hom2d est remainder}
        |\langle \mathcal{G}_\eps,\varphivec\rangle | \leq C \eps^\delta \|\varphivec\|_{L^r(0,T;W^{1,r}(\Omega))}
        \quad \forall\, \varphivec \in \mathcal{D}(\OmegaT;\RR^2).
    \end{equation}
    In particular, we have 
    \begin{equation}\label{Hom2d convergence remainder}
        \mathcal{G}_\eps \to 0 \quad \text{in } L^{\big(\frac{2\gamma-1}{\gamma}\big)^-}(0,T;W^{-1,\big(\frac{2\gamma-1}{\gamma}\big)^-}(\Omega;\RR^2)).
    \end{equation}
\end{lemma}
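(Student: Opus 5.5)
The derivation of \eqref{Hom2d mom exts} will copy the proof of \Cref{lem:Hom3d mom exts} verbatim. For $\varphivec\in\mathcal{D}(\OmegaT;\RR^2)$, the product $\Phi_\eps\varphivec$ lies in $C^\infty_c((0,T);W^{1,\infty^-}_0(\Omegaeps;\RR^2))$ by \Cref{lem:Phi2D}, so by density it is an admissible test function in \eqref{wkSol mom}. We then rewrite every integral over $\Omega$ using four facts: $\Phi_\eps=0$, $\trhoeps=0$ and $\tuveceps=0$ on $\Omega\setminus\Omegaeps$; $\Eeps\ceps=\ceps$ on $\Omegaeps$; and $\div\Phi_\eps=0$. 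The pressure term becomes $\Peff(\trhoeps)\Phi_\eps:\nablax\varphivec$. Adding and subtracting the terms with $\Phi_\eps$ replaced by $\mathbb{I}$ gives exactly \eqref{Hom2d mom exts} with the stated $\mathcal{G}_\eps$.

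The main work is the estimate \eqref{Hom2d est remainder}. The key observation is that in 2D every small factor is exponentially good or almost so. Writing $\lambda_\eps:=|\log(\eps^{\alpha/2}a_\eps^{-1})|$, we have $\lambda_\eps=\eps^{-\alpha}-\tfrac{\alpha}{2}|\log\eps|\ge \tfrac12\eps^{-\alpha}$ for small $\eps$. Hence
\begin{equation*}
\eps^{-1}\lambda_\eps^{-1/2}\le C\eps^{\frac{\alpha-2}{2}},
\end{equation*}
which is a positive power since $\alpha>2$. Also $\|\Phi_\eps-\mathbb{I}\|_{L^s}\le C\eps^{(\alpha-2)/s}$. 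We fix $r>\frac{2\gamma-1}{\gamma-1}$, so that $W^{1,r}(\Omega)\hookrightarrow L^\infty(\Omega)$ (as $r>2$), and estimate the five terms in turn.

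\begin{itemize}
\item \emph{Viscous terms.} We use $\|\Svisc(\nablax\tuveceps)\|_{L^2(\OmegaT)}\le C$. The commutator $\|\nablax(\Phi_\eps\varphivec)-\Phi_\eps\nablax\varphivec\|_{L^2}$ is controlled by \Cref{lem:Phi2D} with $s=2$ and $\|\varphivec\|_{L^\infty}$, giving $\eps^{(\alpha-2)/2}$. The term $(\Phi_\eps-\mathbb{I})\nablax\varphivec$ is handled by H\"older with $\|\Phi_\eps-\mathbb{I}\|_{L^{2r/(r-2)}}$.
\item \emph{Convective term.} We have $\trhoeps|\tuveceps|^2\in L^1(0,T;L^{q}(\Omega))$ for some $q>1$: use $\trhoeps\in L^\infty L^\gamma$ and $\tuveceps\in L^2L^{m}$ for every $m<\infty$ (2D Sobolev). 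The commutator is then estimated with $s=q'$, or more safely with $s<2$ close to $2$, and $\|\varphivec\|_{L^{2s/(2-s)}}\le C\|\varphivec\|_{W^{1,r}}$. This needs the time integrability $\varphivec\in L^\infty$ in time, but since $\tuveceps\otimes\tuveceps\in L^1$ in time only, I would instead split $\trhoeps\tuveceps\otimes\tuveceps=\sqrt{\trhoeps}\tuveceps\otimes\sqrt{\trhoeps}\tuveceps$ and use $\sqrt{\trhoeps}\tuveceps\in L^\infty L^2$ together with $\tuveceps\in L^2L^m$ so that time exponents combine with $L^r$ in time, $r>2$.
\item \emph{Pressure term.} We use the bound $\Peff(\trhoeps)\in L^{(2\gamma-1)^-/\gamma}(\OmegaT)$ from \Cref{lem:Hom2d bds exts}. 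The condition $r>\frac{2\gamma-1}{\gamma-1}$ is exactly $\frac{\gamma}{2\gamma-1}+\frac1r<1$, so a leftover H\"older exponent remains for $\|\Phi_\eps-\mathbb{I}\|_{L^s}$ with finite $s$, yielding $\eps^{(\alpha-2)/s}$.
\item \emph{Coupling term.} We have $\trhoeps\nablax\Eeps\ceps\in L^\infty(0,T;L^{2\gamma/(\gamma+2)})$. Pairing it with $(\mathbb{I}-\Phi_\eps)$ in $L^{s}$ and $\varphivec$ in $L^\infty$ gives a positive power of $\eps$.
\end{itemize}

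Taking $\delta$ as the minimum of the resulting positive exponents yields \eqref{Hom2d est remainder}. Duality, exactly as in the 3D case, then gives $\mathcal{G}_\eps\to0$ in $(L^{r}(0,T;W^{1,r}_0))^\ast=L^{r'}(0,T;W^{-1,r'})$. Since every $r'<\frac{2\gamma-1}{\gamma}$ is attainable, this is \eqref{Hom2d convergence remainder}.

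I expect the main obstacle to be the convective term, where time and space integrability must be balanced carefully. The pressure term is tight at the borderline $r=\frac{2\gamma-1}{\gamma-1}$ only through the non-attained endpoint $(2\gamma-1)^-$, which is why $r$ must be taken strictly larger.
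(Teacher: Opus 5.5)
\textbf{There is a genuine gap, and it sits in the convective term.} Everything else in your proposal agrees with the paper. That includes the test function $\Phi_\eps\varphivec$ and the rearrangement, the bound $\eps^{-1}|\log(\eps^{\alpha/2}a_\eps^{-1})|^{-1/2}\le C\eps^{\frac{\alpha-2}{2}}$, and the H\"older bookkeeping for the viscous, pressure and coupling terms. Your condition $\frac{\gamma}{2\gamma-1}+\frac1r<1$ is exactly what drives the paper's choice of $\hat r,\sigma$.

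You flag the convective term but do not close it. Your first option fails for the reason you give: it needs $\varphivec\in L^\infty$ in time. Your second option fails for a different reason. It writes $\trhoeps\tuveceps\otimes\tuveceps=\sqrt{\trhoeps}\,(\sqrt{\trhoeps}\tuveceps)\otimes\tuveceps$ and uses $\sqrt{\trhoeps}\tuveceps\in L^\infty(0,T;L^2)$, $\sqrt{\trhoeps}\in L^\infty(0,T;L^{2\gamma})$ and $\tuveceps\in L^2(0,T;L^m)$. This does give $L^2$ in time. In space, however, it only gives $L^q$ with $\frac1q=\frac12+\frac1{2\gamma}+\frac1m$, so $q<\frac{2\gamma}{\gamma+1}<2$. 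You would then need $\nablax(\Phi_\eps\varphivec)-\Phi_\eps\nablax\varphivec=(\nablax\Phi_\eps)\varphivec$ to be small in $L^{q'}$ with $q'>2$.

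\Cref{lem:Phi2D} gives nothing for $s>2$, and no cut-off can do better. In 2D a point has positive $s$-capacity when $s>2$. On each of the $\sim\eps^{-2}$ balls $B_{\eps^{\alpha/2}}(\eps x_i)$, where $\Phi_\eps$ passes from $0$ to $\mathbb{I}$, this forces $\int_{B_{\eps^{\alpha/2}}(\eps x_i)}|\nablax\Phi_\eps|^s\,\dd x\ge c\,\eps^{\alpha(2-s)/2}$. Hence $\|\nablax\Phi_\eps\|_{L^s(\Omega)}\to\infty$. So in 2D the convective flux must be paired at spatial exponent at least $2$. The time integrability your split gains is paid for with a spatial exponent the cut-off cannot absorb.

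\textbf{The paper's treatment of this term.} The paper stays at the endpoint $s=2$ of \Cref{lem:Phi2D}. It bounds the term by $\|\trhoeps\tuveceps\otimes\tuveceps\|_{L^2(\OmegaT)}\,C\eps^{\frac{\alpha-2}{2}}\|\varphivec\|_{L^2(0,T;L^\infty(\Omega))}$.

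Be aware that the $L^2(\OmegaT)$ bound it invokes is not a consequence of \eqref{Hom2d bds exts I}. Write $\trhoeps|\tuveceps|^2=\trhoeps^{a}(\sqrt{\trhoeps}|\tuveceps|)^{2(1-a)}|\tuveceps|^{2a}$ and take $\trhoeps\in L^P(0,T;L^Q)$ for any pair interpolating $L^\infty L^\gamma$ and $L^{(2\gamma-1)^-}$. The reciprocal time and space exponents are then $a(1+\frac1P)$ and $1-a(1-\frac1Q-\frac2m)$, and these cannot both equal $\frac12$. At spatial exponent $2$, the best these bounds give is $L^p(0,T;L^2(\Omega))$ with $p<\frac{2(\gamma-1)}{\gamma}$. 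The convective part of $\mathcal{G}_\eps$ is therefore only bounded by $C\eps^{\frac{\alpha-2}{2}}\|\varphivec\|_{L^{p'}(0,T;L^\infty(\Omega))}$ with $p'>\frac{2(\gamma-1)}{\gamma-2}$.

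This proves \eqref{Hom2d est remainder} only for $r>\frac{2(\gamma-1)}{\gamma-2}$. That range is strictly smaller than the one claimed, since $\frac{2(\gamma-1)}{\gamma-2}>\frac{2\gamma-1}{\gamma-1}$. Your instinct that this term is the crux was right. Covering the full range of $r$ needs either a mixed-norm version of \eqref{Hom2d est remainder} with a larger time exponent on $\varphivec$, or an additional idea for the convective term. Neither your proposal nor the paper's proof supplies one.
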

\begin{proof}
    Equation \eqref{Hom2d mom exts} follows by the same lines as in the proof of \Cref{lem:Hom3d mom exts}.
    To see \eqref{Hom2d est remainder}, we fix $r \in \left(\frac{2\gamma-1}{\gamma-1},\infty\right)$ and $\varphivec \in C^\infty_c(\OmegaT;\RR^2)$.
    We first notice that 
    \begin{equation*}
        \frac{2\gamma-1}{\gamma-1}>2,
        \qquad 
        \frac{2\gamma-1}{\gamma} <2,
        \qquad 
        \frac{2\gamma}{\gamma+2}>1
        \qquad \forall\, \gamma \in (2,\infty).
    \end{equation*}
    In view of the first assumption in \eqref{thm:Hom2D restr gamma,alpha}, we thus find some $\hat{r} \in \left(1,\frac{2\gamma-1}{\gamma}\right)$ and some $\sigma \in (1,\infty)$ large enough such that
    \begin{equation*}
        \frac{1}{\hat{r}} + \frac{1}{r} + \frac{1}{\sigma} <1,
        \qquad 
        \frac{\gamma+2}{2\gamma} + \frac{1}{\sigma} <1.
    \end{equation*}
    Thanks to \Cref{lem:Hom2d bds exts}, \Cref{lem:Phi2D}, and the Sobolev embedding $W^{1,r}(\Omega) \hookrightarrow L^\infty(\Omega)$, which holds due to $r>2$, we have
    \begin{equation*}
        \begin{aligned}
            |\langle \mathcal{G}_\eps,\varphivec\rangle|
            &\leq 
           C\big(\|\trhoeps\tuveceps\otimes\tuveceps\|_{L^2(\OmegaT)} + \|\Svisc(\nablax\tuveceps)\|_{L^2(\OmegaT)} \big) \| \Phi_\eps\nablax\varphivec - \nablax (\Phi_\eps \varphivec)\|_{L^2(\OmegaT)}
           \\
           &\quad 
           +
           C\big( \|\Peff(\trhoeps)\|_{L^{\hat{r}}(\OmegaT)} + \|\Svisc(\nablax\tuveceps)\|_{L^{\hat{r}}(\OmegaT)} \big)\|\mathbb{I} - \Phi_\eps\|_{L^\sigma(\Omega)} \|\nablax \varphivec\|_{L^r(\OmegaT)}
           \\
           &\quad 
           +
           C\|\trhoeps\nablax\Eeps\ceps\|_{L^\infty(0,T;L^{\frac{2\gamma}{\gamma+2}}(\Omega))} \|\mathbb{I}-\Phi_\eps\|_{L^\sigma(\Omega)}\|\varphivec\|_{L^1(0,T;L^{\infty}(\Omega))}
           \\
           &\leq 
           C\big( \eps^{\frac{\alpha-2}{2}}\|\varphivec\|_{L^2(0,T;L^\infty(\Omega))} + \eps^{\frac{\alpha-2}{\sigma}} \|\nablax \varphivec\|_{L^r(\OmegaT)} + \eps^{\frac{\alpha-2}{\sigma}}\|\varphivec\|_{L^1(0,T;L^\infty(\Omega))} \big)
           \\
           &\leq 
           C\eps^\delta \|\varphivec\|_{L^r(0,T;W^{1,r}(\Omega))}
        \end{aligned}
    \end{equation*}
    with 
    \begin{equation*}
        \delta := \min\left\{\frac{\alpha-2}{2},\frac{\alpha-2}{\sigma} \right\}>0
    \end{equation*}
    due to the second assumption in \eqref{thm:Hom2D restr gamma,alpha}.
    The convergence \eqref{Hom2d convergence remainder} follows from \eqref{Hom2d est remainder} by duality.
\end{proof}
\begin{remark}
    Similarly as in the 3D case, it is crucial that the second relation in \eqref{thm:Hom2D restr gamma,alpha} is fulfilled. 
    In the case $\alpha =2$, we would only obtain $\delta = 0$ in the proof of \Cref{lem:Hom2d mom exts}. Again, this is expected to force the limit equation to be of Brinkman type, however, also this question in 2D is to date completely open.
\end{remark}
\subsection{Homogenization limit}\label{subsec:HomLim2D}
In this subsection we perform the homogenization limit $\eps \to 0$ and conclude the proof of \Cref{thm:Hom} in the case $d=2$.
Similarly as in the 3D case (see \Cref{lem:Hom3d weak lim}), combining \eqref{Hom2d bds exts I}, \eqref{Hom2d cont exts}, \eqref{Hom2d parab exts}, \Cref{lem:Hom2d mom exts}, and the Sobolev embedding $W^{1,2}(\Omega)\hookrightarrow L^{\infty^-}(\Omega)$ yields, up to a subsequence,
\begin{equation}\label{Hom2d weak cv}
        \begin{aligned}
            &\trhoeps \weak \rho \quad \text{in } L^{(2\gamma-1)^-}(\OmegaT),
            \quad
            \trhoeps \to \rho \quad \text{in } \Cw([0,T];L^\gamma(\Omega)),
            \\
            &\tuveceps \weak \uvec \quad \text{in } L^2(0,T;W^{1,2}_0(\Omega)),
            \quad
            \Phi_\eps^{\mathrm{T}}\trhoeps\tuveceps \to \rho \uvec \quad \text{in } \Cw([0,T];L^{\frac{2\gamma}{\gamma-1}}(\Omega)),
            \\
            &\trhoeps \nablax \Eeps \ceps \weakstar \overline{\rho\nablax c} \quad \text{in } L^\infty(0,T;L^{\frac{2\gamma}{\gamma+2}}(\Omega)),
            \quad 
            \Peff(\trhoeps)\weak \overline{\Peff} \quad \text{in } L^{\frac{(2\gamma-1)^-}{\gamma}}(\OmegaT),
            \\
            &\Eeps\ceps \weakstar c \quad \text{in } L^\infty(0,T;W^{1,2}(\Omega)),
            \quad
            \Eeps\ceps \to c \quad \text{in } C([0,T];L^{2}(\Omega)),
            \\
            &\tceps \to c \quad \text{in } C([0,T];L^2(\Omega)),
            \quad
            \partial_t\tceps \weak \partial_t c \quad \text{in } L^2(\OmegaT),
        \end{aligned}
    \end{equation}
    with $(\rho,\uvec,c)$ satisfying
    \begin{align}
        &\int_0^T \int_\Omega
        \rho \partial_t \varphi + \rho\uvec \cdot \nablax\varphi
        \, \dd x \, \dd t
        = 0,\label{Hom2d limit cont}
        \\
        &\int_0^T \int_\Omega 
        \paracoup c \partial_t \varphi - \kappa \nablax c \cdot \nablax \varphi - \coup (c-\rho) \varphi 
        \, \dd x \, \dd t = 0,\label{Hom2d limit parab}
    \end{align}
    for any test function $\varphi \in \mathcal{D}(\RR^2_T)$, and
    \begin{equation}\label{Hom2d limit mom}
        \begin{aligned}
            &\int_0^T \int_{\Omega}
            \rho\uvec \cdot \partial_t \varphivec + \rho\uvec\otimes\uvec :\nablax \varphivec + \overline{p_\coup}\div\varphivec 
            \, \dd x \, \dd t
            \\
            &\quad =
            \int_0^T \int_\Omega 
            \Svisc(\nablax\uvec):\nablax\uvec - \coup \overline{\rho\nablax c} \cdot \varphivec
            \, \dd x \, \dd t
        \end{aligned}
    \end{equation}
    for any test function $\varphivec\in \mathcal{D}(\OmegaT;\RR^2)$.
    Moreover, the triplet $(\rho, \uvec, c)$ satisfies the regularity \eqref{wkSol reg} with $\Omegaeps$ replaced by $\Omega$ as well as the initial conditions
    \begin{equation}\label{Hom2d limit IC}
        \rho(0)=\rho_0,
        \qquad 
        (\rho\uvec)(0)= (\rho\uvec)_0,
        \qquad 
        c(0)=c_0
        \qquad \text{a.e.~in } \Omega.
    \end{equation}
To identify the limit quantities $\overline{\Peff}$ and $\overline{\rho\nablax c}$, we proceed as in the 3D case and show the strong convergence for the density $\trhoeps$ by exploiting the weak compactness property of the effective viscous flux, as well as the technique from \cite{Feireisl2002}.
\begin{lemma}\label{lem:Hom2d str cv density}
    We have
    \begin{equation*}
        \trhoeps \to \rho \quad \text{in } L^{(2\gamma-1)^-}(\OmegaT),
    \end{equation*}
    and, in particular,
    \begin{equation*}
        \overline{\Peff} = \Peff(\rho),
        \quad 
        \overline{\rho\nablax c} = \rho \nablax c \qquad \text{a.e.~in } \OmegaT.
    \end{equation*}
\end{lemma}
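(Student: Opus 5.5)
We follow the proof of \Cref{lem:Hom3d str cv density}. With the concave cut-offs $T_k(r)=kT(r/k)$ as there, the renormalized continuity equation
\begin{equation*}
\partial_t T_k(\trhoeps)+\div\big(T_k(\trhoeps)\tuveceps\big)+\big(T_k^\prime(\trhoeps)\trhoeps-T_k(\trhoeps)\big)\div\tuveceps=0 \quad\text{in }\mathcal{D}^\prime(\RR^2_T)
\end{equation*}
holds, since $\trhoeps\in L^2(\OmegaT)$ (as $\gamma>2$) and $\tuveceps\in L^2(0,T;W^{1,2}_0(\Omega))$. Combined with the uniform bounds in \Cref{lem:Hom2d bds exts}, it gives $T_k(\trhoeps)\to\overline{T_k}$ in $\Cw([0,T];L^{\infty^-}(\Omega))$, weakly-$\ast$ in $L^\infty(\OmegaT)$. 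It also gives $(T_k^\prime(\trhoeps)\trhoeps-T_k(\trhoeps))\div\tuveceps\weak f$ in $L^2(\OmegaT)$.

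The key step is the effective viscous flux identity. In 2D it reads
\begin{equation*}
\lim_{\eps\to0}\int_0^T\!\!\int_\Omega \varphi\, T_k(\trhoeps)\big(\Peff(\trhoeps)-(\bulkvisc+\shearvisc)\div\tuveceps\big)\,\dx\,\dt
=\int_0^T\!\!\int_\Omega\varphi\,\overline{T_k}\big(\overline{\Peff}-(\bulkvisc+\shearvisc)\div\uvec\big)\,\dx\,\dt .
\end{equation*}
We obtain it from \cite[Proposition~A.1]{OschmannWendt2026-exist}, applied to the extended momentum equation \eqref{Hom2d mom exts} with remainder $\mathcal{G}_\eps$. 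The test functions are $\psi\varphi\,\nablax\Deltax^{-1}[1_\Omega T_k(\trhoeps)]$, which are bounded in $L^\infty(0,T;W^{1,q})$ for every $q<\infty$. The exponents we plan to feed in are $q$ large, $r=\frac{(2\gamma-1)^-}{\gamma}$ for the pressure, $s=\frac{2\gamma}{\gamma+2}$ for the force $\trhoeps\nablax\Eeps\ceps$, $z=\frac{2\gamma}{\gamma+1}$ for the momentum, $\sigma=2$, and $w$ any number in $\big(\frac{2\gamma-1}{\gamma-1},\infty\big)$, so that \eqref{Hom2d est remainder} yields $\langle\mathcal{G}_\eps,\cdot\rangle\to0$ on these test functions.

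We must also check the hypotheses of that proposition. The convective term $\Phi_\eps^{\mathrm T}\trhoeps\tuveceps\otimes\tuveceps$ is bounded in $L^2(0,T;L^{p})$ for some $p>1$ because $W^{1,2}\hookrightarrow L^{\infty^-}$ in 2D. The identification of its weak limit uses the div--curl/commutator lemma, as in the single-phase 2D case \cite{NecasovaOschmann2023}; the factor $\Phi_\eps^{\mathrm T}$ is harmless since $\|\Phi_\eps-\mathbb{I}\|_{L^s}\to0$ for all $s<\infty$. The coupling term is handled by the weak-$\ast$ convergence of $\trhoeps\nablax\Eeps\ceps$ in $L^\infty(0,T;L^{2\gamma/(\gamma+2)})$ paired with the strongly convergent $\nablax\Delta^{-1}$-terms. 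Checking the Hölder exponents for the pressure term, in particular $\frac{1}{r}+\frac1q<1$ with $r$ only slightly above $1$ for $\gamma$ near $2$, is the main technical obstacle; it is where $\gamma>2$ and the improved bound \eqref{impr prs Hom2D} enter.

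With the flux identity, we run Feireisl's argument \cite{Feireisl2002} as in \cite[Lemma~5.3]{OschmannWendt2026-exist}. The oscillation defect measure $\sup_k\limsup_\eps\|T_k(\trhoeps)-T_k(\rho)\|_{L^{\gamma+1}}$ is bounded using the monotonicity of $p_\coup$ outside a compact set. The non-monotone part $q\in C_c^\infty$ only contributes a Lipschitz term, controlled by $\|\overline{T_k}-T_k(\rho)\|$ plus a Gronwall argument. This makes $\rho$ a renormalized solution; the limit $(\rho,\uvec)$ solves \eqref{Hom2d limit cont} with $\rho\in L^2$. Comparing the renormalized equations for $L_k(\rho)$ and $\overline{L_k(\rho)}$ with $L_k(r)=r\int_1^r T_k(z)/z^2\,dz$, letting $k\to\infty$ and applying Gronwall yields $\overline{\rho\log\rho}=\rho\log\rho$. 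Hence $\trhoeps\to\rho$ a.e., and by interpolation with the bound in $L^{(2\gamma-1)^-}$ strongly in $L^{(2\gamma-1)^-}(\OmegaT)$. Then $\overline{\Peff}=\Peff(\rho)$, and $\overline{\rho\nablax c}=\rho\nablax c$ because $\trhoeps\to\rho$ strongly in $L^2$ while $\nablax\Eeps\ceps\weak\nablax c$ weakly in $L^2$.
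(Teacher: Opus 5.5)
Your proposal is correct and follows the paper's route. You renormalize with $T_k$ and obtain the effective viscous flux identity with coefficient $\bulkvisc+\shearvisc$ from \cite[Proposition~A.1]{OschmannWendt2026-exist} with the same exponents (the paper takes $q=1/\delta$, $r=\frac{2\gamma-1}{\gamma}-\delta$, $w=r'$, which lies in your range for $w$), and then run Feireisl's argument as in \cite[Lemma~5.3]{OschmannWendt2026-exist}. Two minor remarks: for $\gamma$ near $2$ the exponent $r=\frac{2\gamma-1}{\gamma}$ is close to $\frac32$, not to $1$, so the check $\frac1r+\frac1q<1$ is not an obstacle; and since $\rho\in L^\infty(0,T;L^\gamma(\Omega))\subset L^2(\OmegaT)$, the renormalized equation for the limit holds directly without the oscillation defect measure.
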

\begin{proof}
    Let $T_k$ be defined as in the proof of \Cref{lem:Hom3d str cv density}.
    Then, by similar arguments as in the aforementioned proof we have that
    \begin{equation*}
        \partial_tT_k(\trhoeps) + \div( T_k(\trhoeps)\tuveceps) + \big( (T_k^\prime(\trhoeps)\trhoeps - T_k(\trhoeps)\Big) \div\tuveceps = 0 \qquad \text{in } \mathcal{D}^\prime(\RR^2_T),
    \end{equation*}
    \begin{equation*}
        T_k(\trhoeps) \to \overline{T_k} \quad \text{in } \Cw([0,T];L^{\infty^-}(\Omega)),
    \end{equation*}
    for some $\overline{T_k}\in \Cw([0,T];L^{\infty^-}(\Omega))$, and
    \begin{equation*}
        T_k(\trhoeps) \weakstar \overline{T_k} \quad \text{in } L^\infty(0,T;L^\infty(\Omega)),
        \qquad 
        \big(T_k^\prime(\trhoeps)\trhoeps - T_k(\trhoeps)\big) \div\tuveceps \weak f \quad \text{in } L^2(\OmegaT),
    \end{equation*}
    for some $f \in L^2(\OmegaT)$.
    Thanks to these relations, \Cref{lem:Hom2d mom exts}, \eqref{Hom2d weak cv}, and \eqref{thm:Hom2D restr gamma,alpha}, we find some $\delta\in (0,\tfrac14)$ small enough such that we may apply \cite[Proposition~A.1]{OschmannWendt2026-exist} with
        \begin{equation*}
        q=\frac{1}{\delta},
        \quad 
        r=\frac{2\gamma-1}{\gamma} - \delta,
        \quad 
        s=\frac{2\gamma}{\gamma+2},
        \quad
        z=\frac{2\gamma}{\gamma+1},
        \quad 
        \sigma = 2,
        \quad 
        w = r^\prime.
    \end{equation*}
    We obtain the relation
    \begin{equation*}
        \lim\limits_{\eps\to0}
        \int_0^T \int_\Omega 
        \varphi T_k(\trhoeps) 
        \Big(\Peff(\trhoeps) - \big(\eta + \mu\big)\div\tuveceps\Big) 
        \, \dd x \, \dd t
        =
        \int_0^T \int_\Omega 
        \varphi \overline{T_k}
        \Big( 
        \overline{\Peff}
        -
        \big(
        \eta + \mu
        \big)
        \div\uvec
        \Big)
        \, \dd x \, \dd t
    \end{equation*}
    for any test function $\varphi \in \mathcal{D}(\OmegaT)$.
    With this relation at hand, we can proceed similarly as in the proof of \Cref{lem:Hom3d str cv density} in order to conclude the strong convergence of $\trhoeps$.
\end{proof}
\begin{proof}[Proof of \Cref{thm:Hom} in the case $d=2$]
    In view of \eqref{Hom2d weak cv}--\eqref{Hom2d limit IC} and \Cref{lem:Hom2d str cv density}, we only have to show that the triplet $(\rho,\uvec,c)$ satisfies the energy inequality \eqref{wkSol energy}.
    This follows from \eqref{Hom2d bds exts II}, \eqref{Hom2d weak cv}, and \Cref{lem:Hom2d str cv density} by using the same arguments as in the 3D case (see end of \Cref{sec:Hom3D}).
\end{proof}

\section{Conclusions and future directions}\label{sec:Concl}
In this work, we considered the rNSKE on a bounded domain perforated by small obstacles.
This relaxation system is an approximate model for the isothermal compressible NSKE provided the relaxation coefficients are chosen appropriately.
In particular, it serves as a reasonable model for a compressible viscous fluid that can occur in two different phases provided a suitable non-monotone pressure function of Van-der-Waals type is used.
Focusing on the case of very tiny holes, that is, the size of the obstacles decrease much faster than their mutual distances, we have shown that the effective behavior of the two-phase fluid is not influenced by the obstacles.
Our result applies for spatial dimension two and three, as well as for a two-phase setting by allowing the underlying pressure function to be non-monotone.
Recently, in \cite{HoeferNecasovaOschmann2026}, a homogenization result for the compressible single-phase NSE in the regime of large holes including convergence rates has been obtained by using a relative energy approach.
We plan to generalize this result to the rNSKE in a two-phase setting by using the relative energy inequality for the rNSKE which has been derived in \cite{ChaudhuriRohdeWendt2025}. Moreover, we plan to investigate the combined limit of homogenization and relaxation to make the diagram ``Homogenization for rNSKE - Relaxation limit to NSK'' commutative.

\appendix
\section{The Bogovski\u{i} operator in 2D and its extension to negative Sobolev spaces}\label{App:Bog2D}
In this section we give a proof of \Cref{lem:Bogeps 2D} and extend the Bogovski\u{\i} constructed in \cite[Appendix~A]{NecasovaOschmann2023} to negative Sobolev spaces.
One integral part of this construction is the Bogovski\u{\i} operator on $\Omega$, the existence of which is classical:
\begin{lemma}[See~{\cite[Theorem~11.17]{FeireislNovotny2017singlim}}]\label{lem:Bog(Appendix)}
    Let $\Omega \subseteq \RR^2$ be a bounded Lipschitz domain and let $s \in (1,\infty)$.
    Then there exists a bounded linear operator $\Bog \colon L^s_0(\Omega) \to W^{1,s}_0(\Omega;\RR^2)$ such that for any $f \in L^s_0(\Omega)$,
    \begin{align*}
        &\div\Bog(f) = f,
        && \|\Bog(f)\|_{W^{1,s}(\Omega)}\leq C\|f\|_{L^s(\Omega)}.
    \end{align*}
    Moreover, this operator can be extended to a linear operator
    \begin{align*}
        \Bog\colon \big[\dot{W}^{1,s^\prime}(\Omega)\big]^\ast
        := 
        \{f \in \big[W^{1,s^\prime}(\Omega)\big]^\ast : \langle f,1\rangle = 0\}
        \to
        L^s(\Omega;\RR^2)
    \end{align*}
    such that, for any $f \in \big[\dot{W}^{1,s^\prime}(\Omega)\big]^\ast$,
    \begin{align*}
        -\int_\Omega \Bog(f) \cdot \nablax \phi \, \dd x 
        =
        \langle f,\phi\rangle \quad \forall\, \phi \in W^{1,s^\prime}(\Omega),
        \qquad 
        \|\Bog(f)\|_{L^s(\Omega)} \leq C \|f\|_{[\dot{W}^{1,s^\prime}(\Omega)]^\ast}.
    \end{align*}
\end{lemma}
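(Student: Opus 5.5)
The plan is the classical Bogovski\u{\i} construction in two steps. First we build $\Bog$ on $L^s_0$. Then we obtain the extension to negative spaces by duality, using the adjoint of $\Bog$ on a smooth domain.

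\textbf{Step 1: the operator on $L^s_0(\Omega)$.} Since $\Omega$ is a bounded Lipschitz domain, it is a finite union of domains $\Omega_j$ that are star-shaped with respect to balls $B_j$. Using a partition of unity, we decompose $f\in L^s_0(\Omega)$ as $f=\sum_j f_j$ with $f_j\in L^s_0(\Omega_j)$ and $\|f_j\|_{L^s}\le C\|f\|_{L^s}$. This is the standard inductive construction, where the mean corrections are transported along the chain of overlapping $\Omega_j$.

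On each star-shaped $\Omega_j$ we use the explicit Bogovski\u{\i} formula
\[
\Bog_j f(x)=\int_{\Omega_j} f(y)\,\frac{x-y}{|x-y|^2}\int_{|x-y|}^\infty \omega\Big(y+r\frac{x-y}{|x-y|}\Big)r\,\dd r\,\dd y,
\]
with $\omega\in\mathcal{D}(B_j)$ and $\int\omega=1$. The identity $\div\Bog_jf=f$ and the fact that $\Bog_j f$ is compactly supported are checked first for smooth $f$ and then extended by density. The bound $\|\nabla\Bog_j f\|_{L^s}\le C\|f\|_{L^s}$ follows because $\nabla\Bog_j$ is a Calder\'on--Zygmund singular integral plus a weakly singular part. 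We then set $\Bog=\sum_j\Bog_j$.

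\textbf{Step 2: the extension to negative spaces.} We first record the key estimate for smooth data: for $f\in\mathcal{D}(\Omega)$ with zero mean, $\|\Bog_j f\|_{L^s}\le C\|f\|_{[W^{1,s'}]^*}$. To prove it, we write $f=\div\mathbf{g}$. Indeed, the kernel of $\Bog_j$ applied to $\div\mathbf{g}$ can be integrated by parts in $y$, which yields a Calder\'on--Zygmund operator acting on $\mathbf{g}$. This is the argument of Geissert--Heck--Hieber, also in Galdi's book. We then choose $\mathbf{g}$ with $\|\mathbf{g}\|_{L^s}\le C\|f\|_{[\dot W^{1,s'}]^*}$, which is possible via the Neumann problem or Hahn--Banach.

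Next we extend $\Bog$ by density to all of $[\dot W^{1,s'}(\Omega)]^*$. Here we use that $\{f\in\mathcal{D}(\Omega): \int f=0\}$ is dense in this space, which follows from reflexivity together with the fact that the annihilator is trivial. Finally, the identity
\[
-\int_\Omega\Bog(f)\cdot\nabla\phi=\langle f,\phi\rangle
\]
holds for smooth $f$ by integration by parts, since $\Bog f$ has zero trace. It then passes to the limit by continuity.

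\textbf{Main obstacle.} The hard step is the negative-norm bound, because the operator must not be differentiated. Our remedy is the integration-by-parts representation $\Bog_j(\div\mathbf{g})=K\mathbf{g}$, with $K$ a Calder\'on--Zygmund operator. Alternatively, since the statement is cited from the literature, we may simply refer to Theorem~11.17 in Feireisl--Novotn\'y, where exactly this is proven.
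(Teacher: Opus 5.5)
Your proposal is correct and follows the standard argument behind the cited result. The paper gives no proof of its own; it only refers to Theorem~11.17 in Feireisl--Novotn\'y. That proof is exactly the star-shaped decomposition with the explicit Bogovski\u{\i} formula and Calder\'on--Zygmund bounds, then the estimate $\|\Bog(\div\gvec)\|_{L^s(\Omega)}\leq C\|\gvec\|_{L^s(\Omega)}$ for $\gvec\cdot\mathbf{n}_{\partial\Omega}=0$, then a density argument, which is the scheme the paper itself reuses in its appendix.

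The one step you should state explicitly is that the localization $f\mapsto f_j$ (with $\chi_j$ your partition of unity) preserves the divergence structure. Namely, $f_j=\div(\chi_j\gvec)+h_j$ with $\|h_j\|_{L^s}\leq C\|\gvec\|_{L^s}$, which holds because the mean corrections only involve $\int_\Omega\chi_j f=-\int_\Omega\nablax\chi_j\cdot\gvec$.
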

We also need a suitable family of Bogovski\u{\i} operators $\Bogepsi$ on the annuli corresponding to the holes of the perforated domain $\Omegaeps$.
\begin{lemma}\label{lem:Bogepsi(Appendix)}
    For $\eps \in (0,1)$, let $\Omegaeps$ be defined as in $\eqref{def:domain}$ with $d=2$, $s \in (1,\infty)$, and denote $\varpi_\eps := \eps^{\frac{\alpha}{2}}a_\eps^{-1}$, $x_i^\eps := \eps x_i$ (see \Cref{subsec:perforated domain}).
    Then, for any $i \in K_\eps$ there exists some bounded linear operator
    \begin{align*}
        \Bogepsi\colon L^s_0(\Aepsi)
        \to 
        W^{1,s}_0 (\Aepsi;\RR^2)
    \end{align*}
    with $\Aepsi := B_{\varpi_\eps a_\eps}(x_i^\eps)\setminus B_{a_\eps}(x_i^\eps)$, such that for any $f \in L^s_0(\Aepsi)$,
    \begin{align*}
        &\div\Bogepsi(f) = f
        &&\|\Bogeps(f)\|_{W^{1,s}(\Aepsi)} \leq C\|f\|_{L^s(\Aepsi)}.
    \end{align*}
    Moreover, this operator can be extended to a linear operator
    \begin{align*}
        \Bog\colon \big[\dot{W}^{1,s^\prime}(\Aepsi)\big]^\ast
        := 
        \{f \in \big[W^{1,s^\prime}(\Aepsi)\big]^\ast : \langle f,1\rangle = 0\}
        \to
        L^s(\Aepsi;\RR^2),
    \end{align*}
    such that, for any $f \in \big[\dot{W}^{1,s^\prime}(\Aepsi)\big]^\ast$,
    \begin{align*}
        -\int_{\Aepsi} \Bog(f) \cdot \nablax \phi \, \dd x 
        =
        \langle f,\phi\rangle \quad \forall\, \phi \in W^{1,s^\prime}(\Aepsi),
        \qquad 
        \|\Bog(f)\|_{L^s(\Aepsi)} \leq C \|f\|_{[\dot{W}^{1,s^\prime}(\Aepsi)]^\ast}.
    \end{align*}
\end{lemma}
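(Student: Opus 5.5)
The plan is to reduce everything to a fixed reference annulus by scaling. Set $A := B_{\varpi_\eps}(0)\setminus B_1(0)$, which is the image of $\Aepsi$ under the affine map $x\mapsto (x-x_i^\eps)/a_\eps$. Its outer radius $\varpi_\eps=\eps^{\alpha/2}a_\eps^{-1}$ tends to infinity, so $A$ is not a fixed domain. I would therefore use an annulus Bogovski\u{\i} operator whose constant is uniform in the outer radius $R\geq 2$. There are two ways to get it. The first is to cover $B_R\setminus B_1$ by dyadic annuli $B_{2^{k+1}}\setminus B_{2^{k-1}}$, each a rescaled copy of a fixed Lipschitz annulus. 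On each copy the classical operator of \Cref{lem:Bog(Appendix)} has a fixed constant, and one glues the pieces by the standard decomposition of $f$ into mean-zero pieces supported in overlapping star-shaped subdomains, as in Galdi's construction. The second is to quote the corresponding estimate from \cite[Appendix~A]{NecasovaOschmann2023}, where exactly this family on $\Aepsi$ is built. Under dilation $x\mapsto\lambda x$, the estimate $\|\nabla \Bog f\|_{L^s}\leq C\|f\|_{L^s}$ is invariant, so the gradient bound transfers with an $\eps$-independent constant. For the zeroth-order part of the $W^{1,s}$ norm, the natural approach is Poincar\'e's inequality on $\Aepsi$ for functions vanishing on the boundary: its constant is bounded by the outer radius $\varpi_\eps a_\eps=\eps^{\alpha/2}\leq 1$. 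This gives $\|\Bogepsi(f)\|_{W^{1,s}(\Aepsi)}\leq C\|f\|_{L^s(\Aepsi)}$ and settles the first part.

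For the extension to $[\dot W^{1,s'}(\Aepsi)]^\ast$, I would follow the classical duality argument behind \Cref{lem:Bog(Appendix)}. The adjoint of $\Bogepsi$ is bounded, so one cannot simply set $\Bogepsi(f):=(\Bogepsi^\ast)$-adjoint. Instead the approach is the Geißert--Heck--Hieber argument, using the explicit kernel representation of the Bogovski\u{\i} operator on star-shaped pieces: for $f=\div \mathbf g$ with $\mathbf g\in L^s$ and $\mathbf g\cdot\mathbf n=0$ on the boundary, the operator $\mathbf g\mapsto \Bog(\div\mathbf g)$ is a Calder\'on--Zygmund operator bounded on $L^s$. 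Since every $f\in[\dot W^{1,s'}(\Aepsi)]^\ast$ can be represented as $\langle f,\phi\rangle=-\int_{\Aepsi}\mathbf g\cdot\nabla\phi$ with $\|\mathbf g\|_{L^s}\leq C\|f\|$, via Hahn--Banach applied to the isometric embedding $\phi\mapsto\nabla\phi$ into $L^{s'}$ on the quotient by constants, one can define $\Bogepsi(f):=\Bog(\div\mathbf g)$. One then checks that this is independent of the representative by density of $L^s_0$ in the dual space. This construction is performed on the unit-scale pieces and is dilation invariant: for $f_\lambda(x)=f(x/\lambda)$, the $L^s$-norm of $\mathbf g$ scales exactly like the $L^s$-norm of $\Bog f$. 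Consequently the constant is again independent of $\eps$ and $i$. The weak identity $-\int\Bogepsi(f)\cdot\nabla\phi=\langle f,\phi\rangle$ holds for smooth $f$ by integration by parts, using the zero trace, and then passes to general $f$ by density and continuity.

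The main obstacle I expect is uniformity in the unboundedly growing ratio $\varpi_\eps$ between the outer and inner radii. I would handle it with the dyadic decomposition. The key point to verify is that the gluing step (the partition-of-unity correction that makes each piece mean-zero) produces constants independent of the number $\sim\log\varpi_\eps$ of dyadic shells. For the $L^s$ gradient bound I would avoid a chain argument, which would accumulate constants, and use instead a weighted Hardy-type inequality; for the negative-norm extension I would check that the corrections are bounded in $[W^{1,s'}]^\ast$ uniformly. If this proves too delicate, I would fall back on citing the uniform annulus estimates of \cite[Appendix~A]{NecasovaOschmann2023} for the $L^s$ theory and only carry out the duality extension myself.
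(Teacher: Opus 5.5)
You take a genuinely different route from the paper, but the step that carries the whole lemma is left unproved. The content of the statement is that the constants do not depend on the aspect ratio $\varpi_\eps\to\infty$ of $\Aepsi$. Dilation removes $a_\eps$ but not this ratio: the reference domain $B_{\varpi_\eps}\setminus B_1$ still varies with $\eps$. Your second paragraph nevertheless concludes that the negative-norm bound is $\eps$-independent because of dilation invariance. The Gei{\ss}ert--Heck--Hieber (Calder\'on--Zygmund) argument only gives a constant on a fixed Lipschitz domain, i.e.\ on each dyadic shell separately. Gluing over $\sim\log\varpi_\eps$ shells in $[W^{1,s'}]^\ast$ is exactly what you leave ``to be checked''. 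The fallback cannot close this, because a right inverse of the divergence $L^s_0\to W^{1,s}_0$ used as a black box carries no negative-norm information. Indeed, take $\mathbf w\in W^{1,s}_0(\Aepsi;\RR^2)$ divergence free with $\mathbf w\neq0$, and $h\in L^{s'}\setminus W^{1,s'}$. Adding $\big(\int_{\Aepsi}fh\big)\mathbf w$ gives another right inverse that admits no bound in terms of $\|f\|_{[W^{1,s'}]^\ast}$. So you need the structure of a specific operator.

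The paper sidesteps all of this. It observes that each $\Aepsi$ is a $c$-John domain with $c$ independent of $\eps$: join any point to a centre on the circle of radius $\tfrac12\varpi_\eps a_\eps$ by a radial segment and an arc. The two-dimensional version of \cite[Theorem~1.1]{LuSchwarzacher2018} then gives one operator with both the $W^{1,s}_0$ bound and $\|\Bog(\div\gvec)\|_{L^s}\le C\|\gvec\|_{L^s}$, with constants depending only on $s$ and the John constant.

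Your dyadic plan is a hand-made special case of that theory and can be completed, but the negative-norm step needs an idea you do not state. Work in the reference annulus with a partition of unity $(\chi_k)$ subordinate to shells $S_k$ of radius $\sim2^k$. Let $M_k:=\sum_{j<k}\int\chi_jf$ be the mass moved across the $k$-th interface.

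For the $W^{1,s}_0$ bound, estimate $M_k$ through the inner shells. This gives the kernel $2^{-2(k-j)(1-1/s)}$, $j<k$, which is bounded on $\ell^s$ by Young's inequality; this is your Hardy inequality.

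For $f=\div\gvec$ with $\gvec\cdot\mathbf n=0$, estimating $M_k$ shell by shell gives at best the kernel $2^{-|k-j|\,|1-2/s|}$. This does not decay at $s=2$, an endpoint the paper needs, and costs a factor $\log\varpi_\eps$. Instead, telescope: $M_k=-\int\nablax\Theta_k\cdot\gvec$ with $\Theta_k:=\sum_{j<k}\chi_j$. Then $M_k$ sees $\gvec$ only on $S_k$, and $|M_k|\lesssim 2^{k(1-2/s)}\|\gvec\|_{L^s(S_k)}$. With Poincar\'e on $S_k$, the correction contributes only $\lesssim\|\gvec\|_{L^s(S_k)}$ to $\|\Bog f\|_{L^s}$, for every $s$. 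The remaining piece $\chi_kf=\div(\chi_k\gvec)-\nablax\chi_k\cdot\gvec$ is handled by the fixed-shape estimate on $S_k$.

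Two smaller points. First, the Hahn--Banach representation needs a Poincar\'e--Wirtinger inequality on $\Aepsi$ that is uniform in $\eps$, because $\phi\mapsto\nablax\phi$ is not isometric for the full $W^{1,s'}$-norm. Second, independence of the representative $\gvec$ needs density of $C^\infty_c$ in $\{\gvec\in L^s:\div\gvec\in L^s,\ \gvec\cdot\mathbf n=0\}$, not density of $L^s_0$ in the dual.
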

\begin{proof}
    First, we notice that the proof of \cite[Theorem~1.1]{LuSchwarzacher2018} also works in spatial dimension two.
    Moreover, we have for any $i \in K_\eps$ that $\Aepsi$ is a $c$-John domain (see~\cite{DieningRuzickaSchumacher2010} for a definition of $c$-John domains) with a constant $c>0$ that does not depend on $\eps >0$.
    The existence of the operators $\Bogepsi$ thus follows from an application of the 2D version of \cite[Theorem~1.1]{LuSchwarzacher2018}.
\end{proof}
We emphasize that the constant $C>0$ in \Cref{lem:Bogepsi(Appendix)} does not depend on $\eps$.
With the Bogovski\u{\i} operators $\Bog$ and $\Bogepsi$ at hand, let us now prove \Cref{lem:Bogeps 2D}.
\begin{proof}[Proof of \Cref{lem:Bogeps 2D}]
    The proof follows essentially the lines of \cite[Proposition~2.2]{LuSchwarzacher2018}, but in contrast to there, we use different cut-off functions like done in \cite{Bravin2024, NecasovaOschmann2023}. Therefore, we give a full proof here.\\
    
    Let the notations of \Cref{lem:Bogepsi(Appendix)} hold true and let us fix $s\in(1,\infty)$.
    We introduce cut-off functions 
    \begin{align*}
        &\Xieps(r)
        :=
        \begin{cases}
            1 & 0 \leq r <a_\eps,
            \\
            \frac{\log(\varpi_\eps a_\eps)-\log(r)}{\log(\varpi_\eps a_\eps) - \log(a_\eps)} &a_\eps\leq r <\varpi_\eps a_\eps,
            \\
            0 &\varpi_\eps a_\eps \leq r <\infty,
        \end{cases}
        &&\thetaeps(r)
        :=
        \begin{cases}
            1 & 0\leq r <\frac{\varpi_\eps a_\eps}{2},
            \\
            \frac{2}{\varpi_\eps a_\eps}(\varpi_\eps a_\eps - r) &\frac{\varpi_\eps a_\eps}{2}\leq r < \varpi_\eps a_\eps,
            \\
            0 & \varpi_\eps a_\eps \leq r < \infty.
        \end{cases}
    \end{align*}
    By an elementary calculation we verify
    \begin{align}\label{bds Xieps}
        &\|\Xieps\|_{L^\infty(\RR^2)}\leq C,
        &&\|\nablax\Xieps\|_{L^s(\RR^2)}^s \leq C 
        \begin{cases}
            \frac{a_\eps^{2-s}}{|\log \varpi_\eps|^s}|\varpi_\eps^{2-s}-1| &s \neq 2,
            \\
            |\log \varpi_\eps|^{-1} &s=2,
        \end{cases}
    \end{align}
    and
    \begin{align}\label{bds thetaeps}
        &\|\thetaeps\|_{L^\infty(\RR^2)} \leq C,
        &&\|\nablax \thetaeps\|_{L^s(\RR^2)}^s \leq C(\varpi_\eps a_\eps )^{2-s},
        &&&\|\nablax \thetaeps\|_{L^\infty(\RR^2)}\leq C(\varpi_\eps a_\eps)^{-1}.
    \end{align}
    For $i \in K_\eps$, we introduce the corresponding single-hole cut-off functions via
    \begin{align*}
        &\Xiepsi(x):= \Xieps(|x-\xepsi|),
        &&\thetaepsi(x) := \thetaeps(|x-\xepsi|)
        &&\text{for~}x \in \RR^2.
    \end{align*}
    With these cut-off functions, we then define 
    \begin{align*}
        \Lepsi \vvec 
        :=
        \thetaepsi \Big( \vvec - \fint_{\Bepsi} \vvec\Big) + \Xiepsi \fint_{\Bepsi}\vvec 
        \qquad \forall\, \vvec \in W^{1,s}(\Bepsi),
    \end{align*}
    where $\Bepsi :=B_{\varpi_\eps a_\eps}(\xepsi)$.
    In \cite[Appendix~A]{NecasovaOschmann2023} it has been shown that this definition gives rise to a bounded linear operator $\Lepsi \colon W^{1,s}(\Bepsi) \to W^{1,s}_0(\Bepsi)$ satisfying
    \begin{align}\label{avg div Lepsi}
        \int_{\Aepsi} \div(\Lepsi \vvec)\, \dd x = 0 
        \qquad \forall\, \vvec \in W^{1,s}(\Bepsi).
    \end{align}
    For $f \in L^s_0(\Omegaeps)$ we then define
    \begin{align*}
        \Bogeps(f):= \Bog(\tilde{f}) - \sum\limits_{i\in K_\eps} \Lepsi \Bog(\tilde{f}) - \sum\limits_{i \in K_\eps} \Bogepsi\big(\div\Lepsi\Bog(\tilde{f})\big).
    \end{align*}
    Note that the third term on the right-hand side in the preceding definition is well-defined due to \eqref{avg div Lepsi}.
    In \cite[Appendix~A]{NecasovaOschmann2023} it has been shown that this definition gives rise to a bounded linear operator $\Bogeps \colon L^s_0(\Omegaeps) \to W^{1,s}_0(\Omegaeps;\RR^2)$ satisfying \eqref{lem:Bogeps 2D non-ext}.
    Now we wish to extend this operator to a linear operator as in \eqref{lem:Bogeps 2D ext}.
    To do so, we fix $\mathbf{g}\in L^s(\Omegaeps;\RR^2)$ and assume that $\div \mathbf{g}\in L^s(\Omegaeps)$, $\mathbf{g}\cdot \mathbf{n}_{\partial\Omegaeps} = 0$ in the sense of traces on $\partial\Omegaeps$.
    Our goal is to show the estimate
    \begin{align}\label{target est}
        \|\Bogeps (\div\mathbf{g})\|_{L^s(\Omegaeps)} 
        \leq 
        C\|\mathbf{g}\|_{L^s(\Omegaeps)}
    \end{align}
    for some constant $C>0$ depending neither on $\eps$ nor on $\|\div\mathbf{g}\|_{L^s(\Omegaeps)}$.
    To verify this estimate, we first notice that
    \begin{equation}\label{to be estimated for target est}
    \begin{aligned}
        \|\Bogeps(\div\gvec)\|_{L^s(\Omegaeps)}
        &\leq 
        \|\Bog(\div\tilde{\gvec})\|_{L^s(\Omega)} 
        +
        \sum\limits_{i\in K_\eps} \|\Lepsi \Bog(\div\tilde{\gvec})\|_{L^s(\Bepsi)}
        \\
        &\qquad
        + \sum\limits_{i\in K_\eps} \|\Bogepsi \big(\div \Lepsi \Bog(\div\tilde{\gvec})\big) \|_{L^s(\Aepsi)},
    \end{aligned}
    \end{equation}
    where we have used $\widetilde{\div\gvec} = \div \tilde{\gvec}$.
    We estimate the right-hand side of \eqref{to be estimated for target est} term by term.
    For the first term, we use \Cref{lem:Bog(Appendix)} and the fact that $\tilde{\gvec}\cdot \mathbf{n}_{\partial\Omega}=0$ in the sense of traces to estimate
    \begin{align}\label{LHS I}
        \|\Bog(\div\tilde{\gvec})\|_{L^s(\Omega)}
        \leq 
        C\|\gvec\|_{L^s(\Omegaeps).}
    \end{align}
    For the second and third term on the left-hand side in \eqref{to be estimated for target est}, we notice by Hölder's inequality
    \begin{align}\label{Est on avg}
        \Big|\fint_{\Bepsi} \Bog(\div \tilde{\gvec})\Big|
        \leq 
        |\Bepsi|^{-\frac{1}{s}} \|\Bog(\div  \tilde{\gvec})\|_{L^s(\Bepsi)}
        \leq 
        C (\varpi_\eps a_\eps)^{-\frac{2}{s}}\|\Bog(\div\tilde{\gvec})\|_{L^s(\Bepsi)}
    \end{align}
    and by Poincaré's inequality
    \begin{align}\label{PC}
        \|\varphi\|_{L^s(\Aepsi)} \leq C \varpi_\eps a_\eps \|\nablax\varphi\|_{L^{s}(\Aepsi)} \qquad \forall\, \varphi \in W^{1,s}_0(\Aepsi).
    \end{align}
    With \eqref{bds Xieps}, \eqref{bds thetaeps}, and \eqref{Est on avg} we estimate for the second term on the left-hand side in \eqref{to be estimated for target est}
    \begin{align*}
        \|\Lepsi\Bog(\div\tilde{\gvec})\|_{L^s(\Bepsi)}
        &\leq 
        \big(\|\thetaepsi\|_{L^\infty(\RR^2)} + \|\Xiepsi\|_{L^\infty(\RR^2)}\big) \Big(\|\Bog(\div\tilde{\gvec})\|_{L^s(\Bepsi)} + \big\|\fint_{\Bepsi} \Bog(\div\tilde{\gvec})\big\|_{L^s(\Bepsi)} \Big)
        \\
        &\leq 
        C\|\Bog(\div\tilde{\gvec})\|_{L^s(\Bepsi)}.
    \end{align*}
    Summing over $i\in K_\eps$ yields
    \begin{align}\label{LHS II}
        \sum\limits_{i\in K_\eps} \|\Lepsi \Bog(\div\tilde{\gvec})\|_{L^s(\Bepsi)}
        \leq 
        C\sum\limits_{i\in K_\eps} \|\Bog(\div\tilde{\gvec})\|_{L^s(\Bepsi)}
        \leq 
        C\|\Bog(\div\tilde{\gvec})\|_{L^s(\Omega)}
        \leq 
        C\|\gvec\|_{L^s(\Omegaeps)},
    \end{align}
    where we have used \Cref{lem:Bog(Appendix)} to obtain the last inequality.
    For the third term on the left-hand side of \eqref{to be estimated for target est} we calculate
    \begin{align*}
        \div\Lepsi\Bog(\div\tilde{\gvec})
        %\\
        %&=
        %\nablax \thetaepsi \cdot\Big(\Bog(\div\tilde{\gvec}) - \fint_{\Bepsi} \Bog(\div\tilde{\gvec})\Big)
        %+ \thetaepsi \div\Bog(\div\tilde{\gvec})
        %+ \nablax\Xiepsi \cdot \fint_{\Bepsi} \Bog(\div\tilde{\gvec})
        %\\
        =
        \div(\thetaepsi \tilde{\gvec}) + \div\big( \thetaepsi [ \Bog(\div\tilde{\gvec}) - \tilde{\gvec} ]\big) - \div \left( (\thetaepsi - \Xiepsi) \fint_{\Bepsi} \Bog(\div\tilde{\gvec}) \right).
    \end{align*}
    Thus, we have 
    \begin{align*}
        &\big\|\Bogepsi\big( \div\Lepsi\Bog(\div\tilde{\gvec})\big)\big\|_{L^s(\Aepsi)}
        \\
        &\leq 
        \big\|\Bogepsi\big(\div(\thetaepsi\tilde{\gvec})\big)\big\|_{L^s(\Aepsi)}
        +
        \big\|\Bogepsi\big(\div [ \thetaepsi(\Bog(\div\tilde{\gvec}) - \tilde{\gvec}) ] \big)\big\|_{L^s(\Aepsi)}
        \\
        &\qquad +
        \big\|\Bogepsi\big(\div [ (\thetaepsi - \Xiepsi) \fint_{\Bepsi} \Bog(\div\tilde{\gvec}) ] \big)\|_{L^s(\Aepsi)}
        %=: I_1^\eps+I_2^\eps+I_3^\eps.
    \end{align*}
    By \Cref{lem:Bogepsi(Appendix)} and \eqref{bds thetaeps} we have
    \begin{align*}
        \big\|\Bogepsi\big(\div(\thetaepsi\tilde{\gvec})\big)\big\|_{L^s(\Aepsi)}
        \leq 
        C \|\thetaepsi\tilde{\gvec}\|_{L^s(\Aepsi)}
        \leq 
        C\|\tilde{\gvec}\|_{L^s(\Aepsi)}.
    \end{align*}
    By \eqref{bds thetaeps} and \eqref{PC} we estimate
    \begin{align*}
        &\big\|\Bogepsi\big(\div [ \thetaepsi(\Bog(\div\tilde{\gvec}) - \tilde{\gvec}) ] \big)\big\|_{L^s(\Aepsi)}
        =
        \big\|\Bogepsi\big(\nablax \thetaepsi \cdot [ \Bog(\div\tilde{\gvec}) - \tilde{\gvec} ] \big)\big)\big\|_{L^s(\Aepsi)}
        \\
        &\leq 
        C \varpi_\eps a_\eps \big\|\nablax \Bogepsi\big(\nablax \thetaepsi \cdot [ \Bog(\div\tilde{\gvec}) - \tilde{\gvec} ] \big)\big)\big\|_{L^s(\Aepsi)}
        \\
        &\leq 
        C\varpi_\eps a_\eps \|\nablax\thetaepsi\cdot [ \Bog(\div\tilde{\gvec})-\tilde{\gvec} ] \|_{L^s(\Aepsi)}
        \\
        &\leq 
        C\varpi_\eps a_\eps \|\nablax \thetaepsi\|_{L^\infty(\RR^2)} \big( \|\Bog(\div\tilde{\gvec})\|_{L^s(\Aepsi)} + \|\tilde{\gvec}\|_{L^s(\Aepsi)}\big)
        \\
        &\leq 
        C\big( \|\Bog(\div\tilde{\gvec})\|_{L^s(\Aepsi)} + \|\tilde{\gvec}\|_{L^s(\Aepsi)} \big),
    \end{align*}
    where we have used $\div\Bogepsi(\div\tilde{\gvec})=\tilde{\gvec}$ to obtain the first equality.
    Due to \eqref{bds thetaeps}, \eqref{Est on avg}, and \eqref{PC} we have
    \begin{align*}
        &\big\|\Bogepsi\big(\div [ (\thetaepsi - \Xiepsi) \fint_{\Bepsi} \Bog(\div\tilde{\gvec}) ] \big)\|_{L^s(\Aepsi)}
        =
        \big\|\Bogepsi\big(  \nablax(\thetaepsi - \Xiepsi) \cdot \fint_{\Bepsi} \Bog(\div\tilde{\gvec}) \big)\|_{L^s(\Aepsi)}
        \\
        &\leq 
        C \varpi_\eps a_\eps \big\|\nablax \Bogepsi\big(  \nablax(\thetaepsi - \Xiepsi) \cdot \fint_{\Bepsi} \Bog(\div\tilde{\gvec}) \big)\|_{L^s(\Aepsi)}
        \\
        &\leq 
        C\varpi_\eps a_\eps \|\nablax (\thetaepsi - \Xiepsi)\|_{L^s(\Aepsi)} \Big|\fint_{\Bepsi}\Bog(\div\tilde{\gvec})\Big|
        \\
        &\leq
        C (\varpi_\eps a_\eps)^{-\frac{2-s}{s}}\big( \|\nablax \thetaepsi\|_{L^s(\RR^2)} + \|\nablax \Xiepsi\|_{L^s(\RR^2)}\big) \|\Bog(\div\tilde{\gvec})\|_{L^s(\Bepsi)}
        \\
        &\leq 
        C\big( 1 + (\varpi_\eps a_\eps)^{-\frac{2-s}{s}}\|\nablax \Xiepsi\|_{L^s(\RR^2)}\big) \|\Bog(\div\tilde{\gvec})\|_{L^s(\Bepsi)}.
    \end{align*}
    Thanks to \eqref{bds Xieps}, we have for any $s \in (1,2)$ that
    \begin{align*}
        (\varpi_\eps a_\eps)^{-\frac{2-s}{s}}\|\nablax \Xiepsi\|_{L^s(\RR^2)}
        &\leq 
        C(\varpi_\eps a_\eps)^{-\frac{2-s}{s}} a_\eps^{\frac{2-s}{s}} \frac{1}{|\log \varpi_\eps|} \big| \varpi_\eps^{2-s} - 1 \big|^{\frac{1}{s}}
        \\
        &\leq 
        C \frac{1}{|\log \varpi_\eps|} \big| 1 - \varpi_\eps^{s-2} \big|^{\frac{1}{s}}
        \leq 
        C
    \end{align*}
    since $\varpi_\eps \xrightarrow{\eps \to 0} \infty$.
    For $s=2$, we evidently have by \eqref{bds Xieps}
    \begin{align*}
        \|\nablax \Xiepsi\|_{L^2(\RR^2)}
        \leq 
        C \frac{1}{|\log \varpi_\eps|^\frac12} \leq C
    \end{align*}
    since $\varpi_\eps \xrightarrow{\eps \to 0}\infty$.
    In total, we have shown for any $s \in (1,2]$ that
    \begin{align*}
        \|\Bogepsi \big(\div \Lepsi \Bog(\div\tilde{\gvec})\big) \|_{L^s(\Aepsi)}
        \leq 
        C \big( \|\Bog(\div\tilde{\gvec})\|_{L^s(\Aepsi)} + \|\tilde{\gvec}\|_{L^s(\Aepsi)}\big)
    \end{align*}
    and summing over $i\in K_\eps$ yields
    \begin{equation}\label{LHS III}
    \begin{aligned}
        &\sum\limits_{i\in K_\eps}\|\Bogepsi \big(\div \Lepsi \Bog(\div\tilde{\gvec})\big) \|_{L^s(\Aepsi)}
        \leq 
        C \sum\limits_{i\in K_\eps} \big( \|\Bog(\div\tilde{\gvec})\|_{L^s(\Aepsi)} + \|\tilde{\gvec}\|_{L^s(\Aepsi)}\big)
        \\
        &\leq C \big( \|\Bog(\div\tilde{\gvec})\|_{L^s(\Omega)} + \|\gvec\|_{L^s(\Omegaeps)} \big)
        \leq 
        C\|\gvec\|_{L^s(\Omegaeps)},
    \end{aligned}
    \end{equation}
    where we have used \Cref{lem:Bog(Appendix)} to deduce the last inequality.
    Combining \eqref{LHS I}, \eqref{LHS II}, and \eqref{LHS III} yields precisely \eqref{target est}.
    Having shown \eqref{target est} for $s \in (1,2]$, the extension in \eqref{lem:Bogeps 2D ext} follows by a density argument similarly as in \cite[Section~11.6]{FeireislNovotny2017singlim}. 
\end{proof}

\section*{Acknowledgments}
{\it F. O. has been supported by the Primus grant PRIMUS 26/SCI/026. F. W. acknowledges funding by Deutsche Forschungsgemeinschaft (DFG, German Research Foundation) under Germany's Excellence Strategy - EXC 2075 - 390740016.}

\bibliographystyle{plain}
\bibliography{Lit-Master}
\end{document}